\numberwithin{equation}{section}
\newcommand{\Z}{\mathbb{Z}}
\newcommand{\Q}{\mathbb{Q}}
\newcommand{\R}{\mathbb{R}}
\newcommand{\CL}{\mathrm{Cl}}
\newcommand\FF{\mathbb{F}}
\newcommand\Gal{\mathrm{Gal}}
\newcommand\ord{\mathrm{ord}}
\DeclareMathOperator{\rk}{rk}
\DeclareFontFamily{U}{wncy}{}
\DeclareFontShape{U}{wncy}{m}{n}{<->wncyr10}{}
\DeclareSymbolFont{mcy}{U}{wncy}{m}{n}
\DeclareMathSymbol{\Sha}{\mathord}{mcy}{"58} 
\newcommand{\calL}{\mathcal{L}}
\newcommand{\Frob}{\mathrm{Frob}}
\newcommand{\Sel}{\mathrm{Sel}}
\newcommand{\nr}{\mathrm{nr}}
\newcommand{\Hom}{\mathrm{Hom}}
\DeclareMathOperator{\tg}{tg}
\DeclareMathOperator{\res}{res}
\newcommand{\calT}{\mathcal{T}}
\DeclareMathOperator{\im}{im}
\DeclareMathOperator{\Epi}{Epi}
\newtheorem{lemma}{Lemma}[section]
\newtheorem{theorem}[lemma]{Theorem}
\newtheorem{prop}[lemma]{Proposition}
\newtheorem{corollary}[lemma]{Corollary}
\newtheorem{mydef}[lemma]{Definition}
\newtheorem{remark}[lemma]{Remark}
\title{\vspace{-\baselineskip}\sffamily\bfseries Statistics of bad parts of class groups}
\author[1]{Peter Koymans\thanks{Mathematisch Instituut, Postbus 80.010, 3508 TA Utrecht, Netherlands, p.h.koymans@uu.nl}}
\author[2]{Yuan Liu\thanks{Department of Mathematics, 1409 W Green St, Urbana, IL 61801, USA, yyyliu@illinois.edu}}
\affil[1]{Utrecht University}
\affil[2]{University of Illinois Urbana-Champaign}
\date{\today}
\begin{document}
\maketitle

\begin{abstract}
Let $p$ be an odd prime. We give a formula for the bad part of $p$-class groups that is valid for $100\%$ of the abelian $p$-extensions when ordered by product of ramified primes.
\end{abstract}

\section{Introduction}	
	For an imaginary quadratic number field $K/\Q$, Gauss gave an explicit description of the $2$-torsion subgroup $\CL(K)[2]$ of the class group of $K$. More precisely, the rank of $\CL(K)[2] \cong \CL(K)/2\CL(K)$ is determined by the number of primes ramified in $K/\Q$. As a consequence of Gauss's genus theory, the average rank of $\CL(K)[2]$ as $K$ varies over all quadratic extensions is infinite, which shows that the naive adaptation of the Cohen--Lenstra conjectures on the distribution of $p$-parts of class groups of quadratic number fields fails when $p=2$. 
	
	Gerth \cite{Gerth} proposed a modification of the Cohen--Lenstra conjectures: instead of $\CL(K)$, it is $2\CL(K)$ whose 2-part distributes randomly according to the Cohen--Lenstra type of probability measure. 
	Similarly to the $p=2$ case, for an odd prime $p$ and a cyclic degree $p$ extension $K/\Q$, Gauss's genus theory implies that the $\Gal(K/\Q)$-invariant part (which is isomorphic to the $\Gal(K/\Q)$-coinvariant part) of $\CL(K)[p^{\infty}]$ is an $\FF_p$-vector space whose rank equals the number of primes ramified in $K/\Q$. Gerth also made conjectures to cover the case where $p$ is odd \cite{Gerth-odd}. Recently, Gerth's conjectures were proven by Smith \cite{Smith} for $p=2$, and for odd $p$ by the first author and Pagano \cite{KP1} (conditional on GRH) and by Smith \cite{Smi22a, Smi22b} (unconditional).

	In \cite{Liu}, the second author extended the Cohen--Lenstra--Gerth type of conjectures and the corresponding genus theory to study the distribution of class groups of abelian extensions. Let $A$ be an abelian group, and $p$ a prime dividing $|A|$. For an $A$-extension $K/\Q$ (i.e.~a pair $(K/\Q, \iota)$, where $K/\Q$ is a Galois extension and $\iota$ is an isomorphism $\Gal(K/\Q) \cong A$), the $p$-part of the class group of $K$, $\CL(K)[p^{\infty}]$, is naturally a module over the ring $\Z_p[A]/(\sum_{\gamma \in A} \gamma)$. Let $e$ be a nontrivial primitive idempotent of $\Q_p[A]$ and define 
	\begin{equation}\label{eq:def-eZp[A]}
		e\Z_p[A]:= \{ ex \mid x \in \Z_p[A]\} \subset \Q_p[A],
	\end{equation}
	which is a quotient ring of $\Z_p[A]$ via the quotient map $\Z_p[A] \to e\Z_p[A]$ defined by sending $x$ to $ex$. The second author \cite{Liu} proved that the ring $e\Z_p[A]$ is a discrete valuation ring and a lattice of the irreducible $\Q_p[A]$-module $e\Q_p[A]$, and studied the distribution of 
	\[
		e\CL(K):= \CL(K)[p^{\infty}] \otimes_{\Z_p[A]} e\Z_p[A]
	\]
	as $K$ varies over totally real $A$-extensions of $\Q$. Define $\mathfrak{m}_e$ to be the maximal ideal of $e\Z_p[A]$, and for any nonzero proper ideal $I \subset e\Z_p[A]$ and a finite $e\Z_p[A]$-module $M$, define the \emph{$I$-rank of $M$} to be
	\[
		\rk_I e M:= \sum_{i=d}^{\infty} n_i, \text{    if $I=\mathfrak{m}_e^d$ and $M \cong \bigoplus_{i=1}^{\infty} \left(e\Z_p[A] / \mathfrak{m}_e^i\right)^{\oplus n_i}$}.
	\]
	Define the ideal $I_e \subseteq e\Z_p[A]$ by
	\begin{equation}\label{eq:def-Ie}
		I_e:= \bigcap_{0 \neq \gamma \in A} \left(1- \gamma, \sum_{j=1}^{\ord(\gamma)} \gamma^j\right) e\Z_p[A].
	\end{equation}
	By \cite[Theorem~1.1 and Conjecture 12.2]{Liu}, for an $A$-extension $K/\Q$: 
	\begin{enumerate}
		\item when $I_e \subseteq I$, the $I$-rank $\rk_I e\CL(K)$ is bounded below, up to a constant, by the number of primes ramified in $K/\Q$ whose inertia subgroup and decomposition subgroup satisfy some explicit conditions depending on $I$ and $A$;
		\item $I_e \cdot e\CL(K)$ is conjectured to be equidistributed according to a Cohen--Lenstra type of probability measure. 
	\end{enumerate}
	Furthermore, a weighted version of the conjecture for the moments of $I_e\cdot e\CL(K)$ in the function field case is proved in \cite[Theorem~1.2]{Liu}. 
		
	In this paper, we give, for $I_e \subseteq I$, a formula to compute $\rk_I e\CL(K)$ for 100\% of the $A$-extensions $K/\Q$ when ordering the extensions by their product of ramified primes. 
	
	\begin{theorem}\label{thm:main}
		Let $p$ be an odd prime, $A$ a finite abelian $p$-group, and $e$ a nontrivial primitive idempotent of $\Q_p[A]$. For a proper ideal $I$ of $e\Z_p[A]$ containing $I_e$ and an $A$-extension $K/\Q$ with a chosen isomorphism $\iota: \Gal(K/\Q) \to A$, define a set of primes that are \emph{special for $K, \iota, e$ and $I$}:
		\[
			\mathcal{R}(K, \iota, e, I):= \left \{ \textup{finite primes $v$ of $\Q$} \,\bigg|\, 
			{\begin{aligned} 
				&\textup{$\iota(D_v)$ acts trivially on $e\Z_p[A]/I$,} \\ 
				&\textup{the exponent of $e\Z_p[A]/I$ divides $|I_v|$} 
			\end{aligned}} \right\},
		\]
		where $D_v$ and $I_v$ are respectively the decomposition subgroup and inertia subgroup of $K/\Q$ at $v$.
		Then for 100\% of the $A$-extensions $K/\Q$, ordered by the product of ramified primes, we have the equality
		\[
			\rk_I e\CL(K) = \# \mathcal{R}(K, \iota, e, I) +C(A, e\Z_p[A]/I),
		\]
		where $C(A, e\Z_p[A]/I)$ is a constant depending only on the abelian group $A$ and the $\Z_p[A]$-module $e\Z_p[A]/I$.
	\end{theorem}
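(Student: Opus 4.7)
The plan is to convert $\rk_I e\CL(K)$ into a Selmer-group invariant via class field theory, evaluate it by a Wiles--Greenberg Euler-characteristic formula, and then show that the only $K$-dependent piece not already captured by $\#\mathcal{R}(K,\iota,e,I)$---a dual Selmer group---takes a constant value for 100\% of the $A$-extensions in the family.

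\textbf{Step 1 (Selmer reformulation).} Set $M := e\Z_p[A]/I$, a finite $\Z_p[A]$-module carrying a $G_\Q$-action through $\iota : \Gal(K/\Q) \xrightarrow{\sim} A$. Because $R := e\Z_p[A]$ is a DVR, the structure theorem for finite $R$-modules, together with Nakayama, rewrites $\rk_I e\CL(K)$ as a normalized dimension of $\Hom_R(e\CL(K), M)$. Class field theory then identifies this Hom group with a Selmer group $\Sel^I(K) \subseteq H^1(G_{\Q,S}, M)$, where $S$ is the set of primes ramifying in $K/\Q$, the local condition at $v \notin S$ being the unramified classes, and the local condition at $v \in S$ being the classes that factor through the known quotient $\Gal(K_v^{\mathrm{ab}}/\Q_v) \to M$.

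\textbf{Step 2 (Poitou--Tate accounting).} Applying the Greenberg--Wiles formula yields
\[
\frac{\#\Sel^I(K)}{\#\Sel^I(K)^{\vee}} \;=\; \frac{\#H^0(\Q, M)}{\#H^0(\Q, M^{\vee})} \prod_{v \in S \cup \{\infty\}} \frac{\#\mathcal{L}_v}{\#H^0(\Q_v, M)}.
\]
A direct computation at each $v \in S$ shows that the two conditions defining $\mathcal{R}(K,\iota,e,I)$---namely $\iota(D_v)$ acts trivially on $M$ and $\exp(M) \mid |I_v|$---are precisely those under which $\#\mathcal{L}_v / \#H^0(\Q_v,M)$ contributes one full generator to the Selmer rank, while in the contrary case the ratio is trivial. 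Hence the product over $S$ contributes exactly $\#\mathcal{R}(K,\iota,e,I)$ to the $R/\mathfrak{m}_e$-log of $\#\Sel^I(K)$, and all remaining factors depend only on $A$ and $M$, producing a universal constant $C'$.

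\textbf{Step 3 (generic dual Selmer --- the main obstacle).} It remains to show that $\#\Sel^I(K)^{\vee}$ is constant for 100\% of $K$ in the family. Classes in $\Sel^I(K)^{\vee}$ that inflate from $H^1(\Gal(L/\Q), M^{\vee})$, where $L/\Q$ is a fixed finite extension splitting $M^{\vee}$, produce an unconditional contribution $C_0$ independent of $K$. Any additional dual Selmer class $c$ carves out a specific finite Galois extension $\Q(c)/\Q$, and the requirement $c \in \Sel^I(K)^{\vee}$ forces each ramified prime of $K$ to satisfy a nontrivial Frobenius/inertia condition in $\Q(c)$. A Chebotarev density calculation along the family of $A$-extensions with $n$ ramified primes, in the spirit of the large-sieve arguments of \cite{KP1} and \cite{Liu}, shows that for each fixed $c$ the proportion of $K$ admitting $c$ tends to $0$ as $n\to\infty$; a union bound over a finite set of candidate $c$'s then gives the 100\% statement. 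Combining Steps 1--3 produces the claimed formula with $C(A, e\Z_p[A]/I) = C' + C_0$.

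\textbf{Principal difficulty.} Step 3 is the decisive one: one must obtain uniform control as $c$ ranges over the candidate dual Selmer classes (a set of bounded $\FF_p$-dimension in $H^1(G_{\Q,S}, M^{\vee})$, but whose elements each impose independent Chebotarev conditions), and one must reconcile the sieve with the product-of-ramified-primes ordering, which is non-uniform in the number of ramified primes. Smith's arguments for Cohen--Lenstra--Gerth and the moment method of the first author with Pagano should be well-suited to this, once adapted to the present $\Z_p[A]$-module coefficient system.
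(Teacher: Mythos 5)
Your high-level outline (class group $\to$ Selmer group $\to$ Greenberg--Wiles $\to$ show the dual Selmer is negligible for $100\%$ of $K$) matches the paper's strategy, but Step~3 contains a conceptual misidentification and Steps~1--2 gloss over an additional statistical input that the paper needs.

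The main issue is in Step~3. You claim the dual Selmer group $\Sel^{I}(K)^\vee$ is \emph{constant} for $100\%$ of $K$, with a nonzero contribution $C_0$ from classes inflating from $H^1(\Gal(L/\Q), M^\vee)$ for a fixed splitting field $L$. This is not what happens: the paper proves (Theorem~\ref{tMainAnalytic}, fed into Lemma~\ref{lem:Selmer-vanish}) that the dual Selmer group $\Sel_{\calL^\perp}(G_\Q, M^*)$ \emph{vanishes entirely} for $100\%$ of $K$. Since $\mu_p \subset M^*$ forces the splitting field to contain $\mu_p$, the inflation classes you describe are ramified at $p$ and need not satisfy the dual local conditions; there is no ``constant dual Selmer piece.'' The constant $C(A, e\Z_p[A]/I)$ of Theorem~\ref{thm:main} has an entirely different provenance: it is assembled from $|H^1(A, M)|$, the second-cohomology obstruction $|N_{\textup{typical}}(A, M)|$, the ratio $|H^0(G_\Q, M)|/|H^0(G_\Q, M^*)|$, and the archimedean factor $|M|$ from Greenberg--Wiles, as spelled out in Theorem~\ref{thm:main-complete}. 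Your proposal also omits this $H^2$ obstruction $N$ altogether. In the paper, relating $\Hom_{\nr}(G_K, M)^A$ to the Selmer group is \emph{not} an equality but a sandwich (Lemma~\ref{lStatisticalFormula}) with a factor $|N|$ depending on $\varphi$, and it requires a separate statistical result (Lemma~\ref{lFormulaN}) to replace $N_\varphi$ by the fixed group $N_{\textup{typical}}$ for $100\%$ of $K$. Without this, Step~1 does not close.

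The mechanism you propose for the vanishing---a Chebotarev density count plus a union bound over ``a finite set of candidate $c$'s''---is also too coarse. The local conditions $\calL_v^\perp$ and hence the admissible candidate classes vary with $K$, and the interaction between the ramification set of $K$ and the local conditions is exactly the hard combinatorics. The paper handles this by a first-moment computation of the $\mu_p$-Selmer group obtained via the pushforward $f^*$: one first computes local sizes precisely (Proposition~\ref{prop:local-cond-size}, showing the local factor is a graded power $p^{\max\{i : v \text{ special at level } i\}}$, not merely a $\{1, p\}$ indicator as in your binary description in Step~2), reduces the dual vanishing for $M^*$ to dual vanishing for $\mu_p$ at every level (Lemma~\ref{lem:Selmer-vanish}), and then expands the first moment into character sums (Theorem~\ref{tCharSum}), classifies maximal unlinked index sets (Theorem~\ref{tUnlinkedClassification}), and controls the resulting sums by a large sieve (Corollary~\ref{cLargeSieve}) and a Siegel--Walfisz bound (Lemma~\ref{lSW}). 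You are right that this is the decisive and technically dominant step, but the argument is a full moment-method/sieve analysis, not a Chebotarev union bound, and the finiteness and uniformity issues you flag in your ``principal difficulty'' paragraph are precisely where a union bound breaks down.
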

	
	An explicit formula for the constant $C(A, e\Z_p[A]/I)$ is given in Theorem~\ref{thm:main-complete} and Definition~\ref{def:constant-N}, from which one sees that this constant depends only on cohomological information of the $A$-module $e\Z_p[A]/I$.
	In \cite{Liu}, the second author proved that there is an implicit constant $c$ depending on $A$ and $e\Z_p[A]/I$ such that $\rk_I e\CL(K) \geq \# \mathcal{R}(K, \iota, e, I) + c$. We regard it as an interesting open question to determine the smallest choice of $c$ such that equality is attained infinitely often.
	
	When $A$ is a cyclic or elementary abelian $p$-group, we explicitly compute the constant $C(A, e\Z_p[A]/I)$ and obtain the following corollaries of Theorem~\ref{thm:main}. 

\begin{corollary}\label{cor:multicyclic}
	Let $A=(\Z/p\Z)^{\oplus n}$ for $n >1$ and an odd prime $p$. For any nontrivial primitive idempotent $e$, we have $I_e=p \cdot e\Z_p[A]=\mathfrak{m}_e^{p-1} e\Z_p[A]$. Moreover, there exists a subgroup $H \cong (\Z/p\Z)^{\oplus n-1}$ of $A$ such that the $A$-action on $e\Z_p[A]$ factors through the quotient $A/H$. Let $I$ be a proper ideal of $e\Z_p[A]$ such that $I_e \subseteq I$. 
	\begin{enumerate}[label=(\roman*)]
		\item\label{item:multicyclic-1} When $I=\mathfrak{m}_e$, the following equality holds for 100\% of the $A$-extensions $K/\Q$
			\[
				\rk_I e\CL(K)=\dim_{\FF_p} \CL(K)^A= \#\{\textup{finite primes of $\Q$ ramified in $K/\Q$}\}-n.
			\]
		\item\label{item:multicyclic-2} When $I \subset \mathfrak{m}_e$, the following equality holds for 100\% of the $A$-extensions $K/\Q$
			\[
				\rk_I e\CL(K)=\#\left\{\textup{finite primes $v$ of $\Q$}  \,\bigg|\, 
			{\begin{aligned}
				&\textup{$\iota(D_v) \subseteq H$ and} \\ 
				&\textup{$v$ is ramified in $K/\Q$} 
			\end{aligned}} \right\} -1 .
			\]
	\end{enumerate}
\end{corollary}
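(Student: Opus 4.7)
The plan is to deduce Corollary~\ref{cor:multicyclic} from Theorem~\ref{thm:main} by computing each of its three ingredients---the ideal $I_e$, the set $\mathcal{R}(K,\iota,e,I)$, and the constant $C(A,e\Z_p[A]/I)$---explicitly in the elementary abelian case. First I would identify $e\Z_p[A]$: since every character of $A = (\Z/p\Z)^{\oplus n}$ takes values in $\mu_p$ and $\Gal(\Q_p(\zeta_p)/\Q_p) = (\Z/p\Z)^{\times}$ acts transitively on $\mu_p \setminus \{1\}$, each nontrivial primitive idempotent $e$ corresponds to a Galois orbit of characters sharing a common kernel $H \subset A$ of index $p$. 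This gives a ring isomorphism $e\Z_p[A] \cong \Z_p[\zeta_p]$ under which the $A$-action factors through $A/H \cong \Z/p\Z$ as multiplication by a primitive $p$-th root of unity; in particular $\mathfrak{m}_e = (1-\zeta_p)e\Z_p[A]$ and $\mathfrak{m}_e^{p-1} = p\cdot e\Z_p[A]$. For $I_e$: when $\gamma \in H\setminus\{1\}$ (nonempty because $n > 1$) one has $(1-\gamma)e = 0$ and $\sum_j\gamma^j e = pe$, contributing the ideal $p\cdot e\Z_p[A]$; when $\gamma \in A\setminus H$, $(1-\gamma)e$ generates $\mathfrak{m}_e$ while $\sum_j\gamma^j e = 0$, contributing $\mathfrak{m}_e$. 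Intersecting gives $I_e = p\cdot e\Z_p[A] = \mathfrak{m}_e^{p-1}$, so every proper ideal $I \supseteq I_e$ has the form $\mathfrak{m}_e^d$ for some $1 \le d \le p-1$.

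Next I would identify $\mathcal{R}$ and compute $C$ in each case. Since $p \in I$, the quotient $e\Z_p[A]/I$ has exponent exactly $p$, so the second condition defining $\mathcal{R}$ reduces to ``$v$ is ramified'' (inertia subgroups of $A$-extensions with $A$ a $p$-group are nontrivial exactly at ramified primes). In case~(i), the $A$-action on $e\Z_p[A]/I = \FF_p$ is trivial, so the decomposition-group condition is automatic and $\mathcal{R}$ is the set of all ramified primes. In case~(ii), any $\gamma \in A\setminus H$ acts on $e\Z_p[A]/I$ as $\zeta_p^{k}$ for some $k \not\equiv 0 \pmod{p}$, which satisfies $\zeta_p^k \equiv 1 + k(\zeta_p-1) \not\equiv 1 \pmod{\mathfrak{m}_e^2}$, so the condition forces $\iota(D_v) \subseteq H$. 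Unpacking Definition~\ref{def:constant-N} expresses $C(A, e\Z_p[A]/I)$ as a cohomological invariant of $e\Z_p[A]/I$ as an $A$-module; the expected values are $C = -n$ in case~(i) and $C = -1$ in case~(ii), the latter uniform in $d \in \{2,\ldots,p-1\}$ because only the kernel $H$ of the $A$-action on $e\Z_p[A]/I$ modulo $\mathfrak{m}_e^2$ should enter.

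Finally, in case~(i) the extra equality $\rk_{\mathfrak{m}_e} e\CL(K) = \dim_{\FF_p}\CL(K)^A$ follows once both sides are shown to equal $r - n$, where $r$ denotes the number of ramified primes. The main theorem with $C = -n$ gives $\rk_{\mathfrak{m}_e} e\CL(K) = \#\mathcal{R} + C = r - n$ for $100\%$ of extensions. Separately, by Hilbert~90 every $A$-invariant ideal class is represented by an ambiguous ideal, and ambiguous ideals supported at unramified primes are products of all primes above $v$ and hence principal; therefore $\CL(K)^A$ is generated by the classes $[\mathfrak{p}_v]$ for $v$ ramified, each of order dividing $p$ (since $\mathfrak{p}_v^p = (v)$), so $\CL(K)^A$ is automatically elementary abelian. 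The classical genus formula $|\CL(K)^A| = p^{r-n}$---valid because $-1$ is a norm in every odd-degree extension, because $A$ elementary abelian forces $|I_v| = p$ at every tamely ramified $v$, and because $p$ itself is unramified for $100\%$ of $A$-extensions---then yields $\dim_{\FF_p}\CL(K)^A = r - n$, and the chain of equalities follows. The main obstacle will be the explicit evaluation of $C$ via Definition~\ref{def:constant-N}: tracking the cohomological invariants of the modules $\FF_p$ (with trivial $A$-action) and $\Z_p[\zeta_p]/(1-\zeta_p)^d$ (with action through $A/H$) requires careful bookkeeping, and extracting $C = -1$ uniformly in $d$ in case~(ii) is the key computational insight.
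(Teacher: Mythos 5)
Your preliminary computations of $I_e$, of the subgroup $H$, and of the set $\mathcal{R}(K,\iota,e,I)$ in each case are correct and follow the same lines as the paper. The gap is in the computation of the constant $C$, which you flag as ``the key computational insight'' but do not carry out, and the heuristic you offer for why $C=-1$ should be uniform in $d$ is not the right reason. The paper proceeds via Theorem~\ref{thm:main-complete}, whose formula for $C$ in case~(ii) is $\log_p \bigl(|N_{\textup{typical}}(A, e\Z_p[A]/I)| / |N_{\textup{typical}}(A, \mathfrak{m}_e e\Z_p[A]/I)|\bigr) - 1$, together with Lemma~\ref{lem:N-M-exp=p}, which shows that for elementary abelian $A$ and any proper $I \supseteq I_e$ one has $N_{\textup{typical}}(A, e\Z_p[A]/I) = 0$. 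That vanishing (not anything about the module modulo $\mathfrak{m}_e^2$) is the reason the ratio is $1$ and $C = -1$ independently of $d$. The group $N_{\textup{typical}}$ in Definition~\ref{def:constant-N} is a subgroup of $H^2(A, e\Z_p[A]/I)$ and depends on the full $A$-module structure, not merely on the action modulo $\mathfrak{m}_e^2$; the uniformity you observe is a consequence of vanishing, not of a reduction to a fixed subquotient. Since Lemma~\ref{lem:N-M-exp=p} is available to you, your proposal should cite and use it (and Lemma~\ref{lem:N-Fp} for the $d=1$ endpoint) rather than leaving the evaluation of $C$ as an unexplained expectation. In case~(i), the branch $e\Z_p[A]/I \cong \FF_p$ of Theorem~\ref{thm:main-complete} gives $C = -\dim_{\FF_p} A[p] = -n$ at once.

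A second concern is your genus-theoretic argument for the middle equality in case~(i). You assert that by Hilbert~90 every $A$-invariant ideal class is represented by an ambiguous ideal, but Hilbert~90 only gives $H^1(A, K^\ast) = 0$, which shows that the quotient of $\CL(K)^A$ by the subgroup generated by ambiguous ideals embeds into $H^1(A, P_K)$, which in turn embeds into $H^2(A, E_K)$; for non-cyclic $A$ this last group need not vanish, so the claim does not follow. Likewise the clean formula $|\CL(K)^A| = p^{r-n}$ is a cyclic-extension phenomenon (Chevalley's ambiguous class number formula); for elementary abelian $A$ of rank $n > 1$ the analogous statement involves additional cohomological terms. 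The paper itself only invokes ``genus theory'' here without detail (presumably referring to the genus theory developed in \cite{Liu}), so you are not at a disadvantage relative to what is written, but your sketch of an argument would need to be replaced by a correct reference or a more careful treatment of the $H^2(A, E_K)$ obstruction.
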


\begin{corollary}\label{cor:cyclic}
	Let $A=\Z/p^n\Z$ and $e$ a nontrivial primitive idempotent of $\Q_p[A]$. Let $I$ be a proper ideal of $e\Z_p[A]$ containing $I_e$. Then for 100\% of the $A$-extensions $K/\Q$, ordered by the product of ramified primes, the following equality holds
	\[
		\rk_I e\CL(K)=\# \mathcal{R}(K, \iota, e, I) - 1.
	\]
\end{corollary}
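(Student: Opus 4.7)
The plan is to apply Theorem~\ref{thm:main} and then evaluate the constant $C(A, e\Z_p[A]/I)$ explicitly in the cyclic case, using the formula from Theorem~\ref{thm:main-complete} together with Definition~\ref{def:constant-N}. The value $C = -1$ should reflect the fact that, for cyclic $A$, the ring $e\Z_p[A]$ is a discrete valuation ring and the Tate cohomology of $A$ on a finite module is two-periodic with trivial Herbrand quotient, so the various cohomological contributions collapse to a single global relation.

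First, I would pin down the algebraic structure. Under the decomposition
\[
	\Q_p[A] \;\cong\; \prod_{i=0}^{n}\Q_p(\zeta_{p^i})
\]
coming from the factorization $x^{p^n}-1 = \prod_{i=0}^{n}\Phi_{p^i}(x)$ over $\Q_p$, every nontrivial primitive idempotent $e$ singles out a factor $\Q_p(\zeta_{p^i})$ for some $1 \leq i \leq n$. Hence $e\Z_p[A] \cong \Z_p[\zeta_{p^i}]$ is a DVR, totally ramified over $\Z_p$ with uniformizer $1 - \zeta_{p^i}$ and ramification index $\phi(p^i)$; every proper ideal $I \subseteq e\Z_p[A]$ takes the form $\mathfrak{m}_e^d$ for some $d \geq 1$, and the quotient $M := e\Z_p[A]/I$ is a cyclic $e\Z_p[A]$-module. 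The $A$-action on $M$ factors through $A/p^iA$, with a fixed generator of $A$ acting by multiplication by $\zeta_{p^i}$.

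Next I would feed this data into the explicit description of $C(A,M)$ in Definition~\ref{def:constant-N}. A generator $\sigma$ of $A$ acts on $M$ by multiplication by $\zeta_{p^i}$, so $\sigma - 1$ is multiplication by $\zeta_{p^i}-1$, whose kernel and cokernel on $M = \Z_p[\zeta_{p^i}]/\mathfrak{m}_e^d$ are each one-dimensional over the residue field $e\Z_p[A]/\mathfrak{m}_e$. This identifies the groups $\hat{H}^0(A, M)$ and $\hat{H}^1(A,M)$ as cyclic of the same order (consistent with the triviality of the Herbrand quotient) and reduces the combinatorial sum defining $C(A,M)$ to a single cyclic contribution of $-1$.

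The main obstacle will be correctly translating the abstract expression for $C(A, e\Z_p[A]/I)$ in Definition~\ref{def:constant-N} into the concrete cohomological quantities attached to the cyclic $A$-module $\Z_p[\zeta_{p^i}]/\mathfrak{m}_e^d$. Once this dictionary is established, the DVR structure and the two-periodicity of cyclic-group cohomology force the expression to collapse to a single term, giving the claimed value of $-1$.
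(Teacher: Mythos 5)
You have the right high-level plan: invoke Theorem~\ref{thm:main-complete} and evaluate $C(A,e\Z_p[A]/I)$ for cyclic $A$. Your structural remarks ($e\Z_p[A]\cong\Z_p[\zeta_{p^i}]$, a generator of $A$ acts by $\zeta_{p^i}$, $\sigma-1$ has $1$-dimensional kernel and cokernel on $e\Z_p[A]/\mathfrak{m}_e^d$) are all correct. But the crucial step --- actually evaluating the constant --- is left as a gesture, and the gesture points in the wrong direction.

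The constant in Theorem~\ref{thm:main-complete} is not a ``combinatorial sum'' of Tate cohomology groups, nor does it come from a Herbrand-quotient type cancellation; in the non-degenerate case it is
\[
\log_p \frac{|N_{\textup{typical}}(A, e\Z_p[A]/I)|}{|N_{\textup{typical}}(A, \mathfrak{m}_e e\Z_p[A]/I)|}-1,
\]
where $N_{\textup{typical}}(A,M)$ is the specific kernel inside $H^2(A,M)$ from Definition~\ref{def:constant-N}, namely the elements killed by every composite $H^2(A,M)\to H^2(D,M)\to H^2(\widehat{D},M)$ over all $(D,I)\in\mathcal{C}$. Observing that $\widehat{H}^0(A,M)$ and $\widehat{H}^1(A,M)$ have the same size says nothing a priori about this kernel being trivial, since $N_{\textup{typical}}$ lives entirely in $H^2$ and is cut out by restriction-then-inflation maps, not by a Herbrand-quotient identity. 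What is actually needed is the vanishing $N_{\textup{typical}}(A, e\Z_p[A]/I)=0$, and you never argue for it. The paper gets it in one line from Lemma~\ref{lem:N-in-inf}: for cyclic $A$ the decomposition $A\cong H\times C$ with $H$ acting trivially forces $H=0$ (since $\Z/p^n\Z$ is indecomposable), so $N_{\textup{typical}}(A,M)$ sits inside $H^2(0,M^C)=0$. (One can also see it directly by taking $(D,I)=(A,A)\in\mathcal{C}$: then $\widehat{D}=\hat\Z\times A$ and inflation $H^2(A,M)\to H^2(\hat\Z\times A,M)$ is split injective, so $R(A,A,M)=0$ and the map $H^2(A,M)\to H^2(A,M)/R(A,A,M)$ is the identity.) Either way, once $N_{\textup{typical}}=0$ is in hand, the constant is $\log_p 1 - 1 = -1$ when $e\Z_p[A]/I\not\cong\FF_p$, and $-\dim_{\FF_p}A[p]=-1$ in the degenerate case $I=\mathfrak{m}_e$; you also omit to distinguish these two cases of Theorem~\ref{thm:main-complete}. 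As written, the proposal identifies where the constant should be computed but does not compute it.
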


\subsection{Related results}
Our results fall in an area that we would like to coin ``statistical genus theory''. A prominent feature of classical genus theory is that it provides explicit formulas for a certain part of the class group, e.g.~Gauss's genus theory provides an exact formula for the $2$-torsion of the class group of imaginary quadratic fields. Since the work of Gauss, there have been many extensions of genus theory explored in the literature. 

Approximately five years ago, it has been observed in the literature that in certain situations this phenomenon persists except that an exact formula is now only valid for 100\% of the extensions. Such a situation typically arises when one is studying class groups (or more generally, Selmer groups) with rather relaxed local conditions. In this situation, the dual Selmer group vanishes for 100\% of the extensions leading to an exact formula via an application of the Greenberg--Wiles formula.

We will now survey some of these recent results. In the class group setting, there have been the works of Fouvry--Koymans--Pagano \cite{FKP} and Koymans--Morgan--Smit \cite{KMS}. These works give a 100\% formula for the $4$-rank of $K(\sqrt{n})$, where $K$ is a fixed quadratic field and $n$ varies over rational (squarefree) integers. In the elliptic curve setting, we mention the works of Morgan--Paterson \cite{MP} and Chan \cite{Chan}, who both give 100\% formulas for Selmer groups in certain families of elliptic curves. There is also the work of Pagano--Sofos \cite{PS}, which explores this phenomenon in a slightly different setting, namely that of conics. It is worth emphasizing that all of these results are (at the very least suspected to be) of a genuine statistical nature, and the relevant formulas are not always true unlike classical genus theory. For example, Morgan--Paterson prove this in the context of their work in \cite[Section 9]{MP}.

In all of the above works, a key ingredient is the character sum method originally developed by Fouvry--Kl\"uners \cite{FK} and Heath-Brown \cite{HB}. This method was extended and used by various authors, most notably in the work of Smith \cite{Smi22a, Smi22b}. It is precisely the character sum method that is utilized to show 100\% vanishing of the dual Selmer group (typically by a first moment computation), and it is precisely this aspect of the approach that is genuinely statistical.

Finally, we would like to mention the works of Alberts \cite{Alberts} and Alberts--O'Dorney \cite{AO, AO2}. These works do not directly fall inside the area of statistical genus theory, but also rely on a philosophically similar application of the Greenberg--Wiles formula (and also predate the aforementioned results).

\subsection{Methods}
The power of our results lies in its generality, as our results hold for general finite abelian $p$-groups (with $p$ odd), which goes significantly beyond the scope of the previous results in statistical genus theory. We will now discuss the key ingredients that allow us to prove our results for general finite abelian $p$-groups.

Our first step, which is executed in Section \ref{sSelmer}, is to compare the relevant class groups with a ``Selmer group'' (by which we mean the subset of a global cohomology group cut out by local conditions). This comparison is already subtle and involves some statistical work, see Lemma \ref{lStatisticalFormula} and Lemma \ref{lFormulaN}. The local conditions of the relevant Selmer group are then computed in Section \ref{sLocal}. Of particular importance is to show that the dual local conditions are often trivial, which is accomplished in Proposition \ref{prop:pushforward<p} and Lemma \ref{lem:notes-1}. It is this condition that will ultimately lead to the 100\% vanishing of the dual Selmer group. 


Our task is then to show 100\% vanishing of the dual Selmer group. This is achieved by a first moment computation combined with the character sum method. Our main analytic result is Theorem \ref{tMainAnalytic} and its proof occupies the majority of Sections \ref{sChar} to \ref{sMain}. We remark that Theorem \ref{tMainAnalytic} aims to encapsulate the general phenomenon that a Selmer group should be small when it has very stringent local conditions, and we hope that it can also be applied to other settings.

Using the character sum method for general abelian groups has only been done once before in the literature, namely in Koymans--Rome \cite{KR} (based on a parametrization due to Koymans--Pagano \cite{KPMalle}). However, \cite{KR} relies on the fact that the discriminant is unfair so that the problem reduces, after some work, to the case of elementary abelian groups. This is not possible in our work as we count by a fair counting function.

We will now discuss various limitations and potential extensions of our results. The restriction that $p$ is odd makes the character sum method significantly easier to apply. A key step in our arguments is, in the terminology of Heath-Brown and Fouvry--Kl\"uners, to classify ``maximal unlinked sets''. However, in the case $p = 2$, the maximal unlinked sets are significantly harder to classify (namely, in that setting, the notion of unlinked must be adapted to account for quadratic reciprocity). It is likely possible that our results can be extended to general global fields with the caveat that $p$ must now not divide the number of roots of unity and the characteristic of the global field.

Finally, another interesting direction is to rigorously show that our results are of a genuine statistical nature, i.e.~do not hold for all abelian $A$-extensions. We expect, but do not show, that this is the case precisely when $I \neq \mathfrak{m}_e$. In other words, we expect our results to be ``statistical'' precisely when classical genus theory does not apply. 

\subsection{Notation}
Throughout the paper, we will use the following notations:
\begin{itemize}
\item We write $\Omega(\Q)$ for the set of places of $\Q$.
\item We fix an algebraic closure $\overline{\Q}$ and algebraic closures $\overline{\Q_v}$ for every place $v \in \Omega(\Q)$. All of our number fields are implicitly taken inside this fixed algebraic closure $\overline{\Q}$.
\item We fix embeddings $\iota_v: \overline{\Q} \rightarrow \overline{\Q_v}$ for every $v \in \Omega(\Q)$. This induces embeddings $\iota_v^\ast: G_{\Q_v} \rightarrow G_\Q$.
\item For a number field $K$ and a nonarchimedean prime $w$ of $K$, we write $\mathcal{T}_w$ for the inertia subgroup of $G_{K_w}$. We write $\sigma_w \in \mathcal{T}_w$ for a topological generator of tame inertia. We also fix a lift $\Frob_w \in G_{K_w}$ of Frobenius (from Section \ref{sChar} onwards, we shall normalize these choices, but this normalization is not required in Sections \ref{sLocal} and \ref{sSelmer}).
\item For an abelian group $A$, we mean by an $A$-extension $K/\Q$ an ordered pair $(K/\Q, \iota)$, where $K/\Q$ is a Galois extension and $\iota$ is a isomorphism $\Gal(K/\Q) \cong A$. The set of $A$-extensions is naturally in bijection with $\mathrm{Epi}(G_\Q, A)$, and we shall frequently (and implicitly) switch between the two viewpoints.
\item For an abelian group $A$, an $A$-extension $K/\Q$ and a place $v \in \Omega(\Q)$, we let $D_v, I_v \subseteq A$ be the usual decomposition group and inertia subgroup (which we view as subsets of $A$ via $\iota$).
\end{itemize}

\subsection*{Acknowledgements}
This project was initiated when the second author visited the first author at the Institute for Theoretical Studies (ETH Z\"urich) in February 2024. Both authors wish to thank the institute for its financial support and excellent working conditions. The first author gratefully acknowledges the support of Dr. Max R\"ossler, the Walter Haefner Foundation and the ETH Z\"urich Foundation. The first author also acknowledges the support of the Dutch Research Council (NWO) through the Veni grant ``New methods in arithmetic statistics''. The second author is partially supported by the NSF grant DMS-2200541.

\section{Local conditions}
\label{sLocal}
The main goal of the next two sections is to relate the class group to a (generalized) Selmer group. It will be important to write down somewhat explicitly the relevant local conditions of this Selmer group. In the process, we compute the sizes of the local conditions and give a sufficient condition for a certain dual Selmer group to vanish. The relevance of this dual Selmer group is that it appears naturally upon applying the Greenberg--Wiles formula; our main theorem will ultimately be a consequence of this formula together with a statistical vanishing result for this dual Selmer group that we prove in Sections \ref{sChar} to \ref{sMain}.

\subsection{Size of local conditions}
We start by computing the sizes of the local conditions. We first give an exact formula that depends on so-called special primes, a notion introduced in \cite{Liu}. After having done so, we show that the local conditions for a certain dual Selmer group are always of size bounded by $p$ (except possibly for primes above $p$).

\subsubsection{An explicit formula for the cardinality of local conditions}
	Let $A$ be a finite abelian $p$-group with $p$ odd. Let $\varphi \in \Epi(G_{\Q}, A)$ and write $K:=\overline{\Q}^{\ker \varphi}$.
	
	\begin{mydef}
		Let $M$ be a finite $\Z_p[A]$-module. For each finite prime $v$ of $\Q$, we define the local condition 
		\[
			\calL_v(M):= \res_{w/v}^{-1}(H^1_{\nr}(G_{K_w}, M)) \subseteq H^1(G_{\Q_v}, M),
		\]
		where $w$ is a prime of $K$ lying above $v$ and $\res_{w/v}$ is the natural restriction map $H^1(G_{\Q_v}, M) \to H^1(G_{K_w}, M)$.
	\end{mydef}
	
	\begin{remark}
	\label{rInfRes}
		Note that $\calL_{v}(M)$ is the kernel of the restriction map
	\[
		\calL_{v}(M)= \ker\left(H^1(G_{\Q_v}, M) \to H^1(\calT_{w}, M)^{G_{\Q_v}}\right) = \inf\left(H^1(G_{\Q_v}/\calT_w, M)\right),
	\] 
	where the last equality follows from the inflation-restriction exact sequence. Since $G_{\Q_v}/\calT_v \cong \hat{\Z}$, we have 
	\[
		G_{\Q_v}/\calT_w \cong \hat{\Z} \times_{D_v/I_v} D_v \cong \hat{\Z} \times I_v.
	\] 
	Since $p$ is odd, we observe that $I_v$ must be a cyclic subgroup of $\Gal(K/\Q)$.
	\end{remark}
	
	 Let $e$ be a nontrivial primitive idempotent of $\Q_p[A]$. Then $e\Z_p[A]$,  as defined in \eqref{eq:def-eZp[A]}, is a complete discrete valuation ring with residue field $\FF_p$ \cite[Proposition~2.7]{Liu}. Let $\mathfrak{m}_e$ denote the maximal ideal of $e\Z_p[A]$, and denote
	\begin{equation}\label{eq:M_j}
		M_j := e\Z_p[A]/\mathfrak{m}_e^j.
	\end{equation}
	Define $r_e$ to be the maximal integer such that there exists some element $\gamma \in A$ for which both $1-\gamma$ and $\ord(\gamma)$ annihilate $M_{r_e}$, i.e.~$r_e$ is such that $\mathfrak{m}_e^{r_e}= I_e$.
	
	 Note that $M_j$ can be viewed as a $G_{\Q}$-module via $\varphi \in \Epi(G_{\Q}, A)$. For each place $v$ of $\Q$, we set
	\begin{equation}\label{eq:L-v-j}
		\calL_{v,j} := \calL_{v}(M_j).
	\end{equation}
		
	\begin{mydef}\label{def:special-prime}
		Given an $A$-extension $K/\Q$, we say that a finite place $v$ of $\Q$ is \emph{special at level $j$ (for $K/\Q$)} if the following conditions are satisfied
		\begin{enumerate}[label=(\alph*)]
			\item the decomposition subgroup $D_v$ of $K/\Q$ at $v$ acts trivially on $M_j$,
			\item the size of the inertia subgroup of $K/\Q$ at $v$ annihilates $M_j$, i.e.~$|I_v| M_j=0$.
		\end{enumerate} 
	\end{mydef}
	
	The main point of this definition is that we can now state our next proposition, which gives an explicit formula for the size of the local conditions.
	
	\begin{prop}\label{prop:local-cond-size}
		Retain the notation and definitions above. For all $1\leq j \leq r_e$ and all primes $v$ of $\Q$ ramified in $K/\Q$, we have 
		\begin{equation}\label{eq:local-cond-size}
			\frac{|\calL_{v,j}|}{|H^0(G_{\Q_v}, M_j)|} = p^{\max\{1\leq i \leq j \, :\, \textup{$v$ is special at level $i$}\}}.
		\end{equation}
	\end{prop}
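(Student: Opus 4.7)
My plan is to reduce the proposition to a concrete computation in the discrete valuation ring $R = e\Z_p[A]$. By Remark~\ref{rInfRes}, $\calL_{v,j} = H^1(G_{\Q_v}/\calT_w, M_j)$ and $G_{\Q_v}/\calT_w \cong \hat{\Z} \times I_v$ is abelian, with the $M_j$-action factoring through the quotient to $D_v = I_v \cdot \langle \Phi \rangle$, where $\Phi$ is a chosen Frobenius lift. The first step is to establish
\[
\frac{|\calL_{v,j}|}{|H^0(G_{\Q_v}, M_j)|} = |H^1(I_v, M_j)^{\Phi}|
\]
by applying Lyndon--Hochschild--Serre to $1 \to I_v \to \hat{\Z} \times I_v \to \hat{\Z} \to 1$. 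Since $\hat{\Z}$ has cohomological $p$-dimension $1$, only two columns of the $E_2$-page are nonzero and the spectral sequence degenerates; the resulting short exact sequence for $H^1$ has outer terms $H^1(\hat{\Z}, M_j^{I_v})$ and $H^1(I_v, M_j)^{\Phi}$, and $|H^1(\hat{\Z}, M_j^{I_v})| = |(M_j^{I_v})^{\Phi}| = |M_j^{D_v}|$ because $\Phi - 1$ has kernel and cokernel of equal size on any finite module.

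Next, I would compute $|H^1(I_v, M_j)^{\Phi}|$ explicitly using the DVR structure of $R$. Letting $\tau$ generate the cyclic group $I_v$ of order $|I_v| = p^a$, set $s := e(\tau - 1)$ and $N_\tau := \sum_{i=0}^{|I_v|-1}(e\tau)^i$ as elements of $R$, so that $H^1(I_v, M_j) = \ker(N_\tau)/sM_j$ inside $M_j = R/\mathfrak{m}_e^j$. The key identity $s \cdot N_\tau = 0$ in $R$ forces a dichotomy, since $R$ is a domain: either $s = 0$ (then $N_\tau$ acts on $M_j$ as multiplication by the integer $|I_v|$), or $s \neq 0$ (then $N_\tau = 0$ as an element of $R$). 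A direct computation using the valuation $v$ on $R$ shows that $H^1(I_v, M_j)$ is a cyclic $R$-module of length $\min(j, v_b)$ in the first case and $\min(j, v_\tau)$ in the second, where $v_\tau = v(s)$ and $v_b = v(|I_v|)$. Since $\Phi$ acts on $H^1(I_v, M_j)$ by multiplication by $e\Phi \in R$, setting $v_\Phi := v(e(\Phi - 1))$, the $\Phi$-invariants have size $p^{\min(\text{length},\, v_\Phi)}$.

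The final step is to match the outcome with $p^{j_0}$ for $j_0 = \min(v_a, v_b, j)$, where $v_a = \min(v_\tau, v_\Phi)$ since $D_v$ is generated by $\tau$ and $\Phi$. The case $s = 0$ is immediate: then $v_\tau = \infty$, so $v_a = v_\Phi$, and the answer $p^{\min(j, v_b, v_\Phi)}$ agrees with $p^{j_0}$. The case $s \neq 0$ yields exponent $\min(j, v_\tau, v_\Phi)$, which matches $j_0$ precisely when $v_b \geq v_\tau$. I expect this inequality to be the main obstacle; I would prove it via a cyclotomic computation: since $e\tau$ is a primitive $p^c$-th root of unity in $R$ for some $1 \leq c \leq a$, we have $\Z_p[\zeta_{p^c}] \subseteq R$, and the standard ramification formula $v(\zeta_{p^c} - 1) \cdot (p-1)p^{c-1} = v(p)$ yields $v_b/v_\tau = a(p-1)p^{c-1} \geq p-1 \geq 2$ since $p$ is odd. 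This cyclotomic bound forces the exponents to collapse correctly in both cases.
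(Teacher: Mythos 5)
Your proof is correct, but it takes a genuinely different route than the paper's. The paper proceeds by induction on $j$, with a case split on whether $G_{\Q_v}$ acts trivially on $M_{j+1}$ and a further sub-split on whether the norm element $\sum_i\sigma^i$ annihilates $M_{j+1}$ (invoking Lemma~2.6 of \cite{Liu} for this dichotomy). You instead compute both sides in closed form as $p^{\min(j,\,v_\tau,\,v_\Phi,\,v_b)}$: the same inflation--restriction sequence for $1 \to I_v \to \widehat{D_v} \to \hat\Z \to 1$ gets you to $|\mathcal{L}_{v,j}|/|H^0| = |H^1(I_v,M_j)^\Phi|$, but then you exploit the DVR structure of $e\Z_p[A]$ to evaluate $H^1(I_v,M_j)$ as an explicit cyclic module in each branch of the dichotomy $e(\tau-1)=0$ vs.\ $eN_\tau=0$ (which you re-derive directly from $R$ being a domain, rather than quoting Liu). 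The only nontrivial matching is the inequality $v(|I_v|)\ge v(e(\tau-1))$, which you obtain from the cyclotomic ramification formula in $\Z_p[\zeta_{p^c}]\subseteq R$ and the hypothesis $p$ odd; this is the analogue of where the paper's Case~2 argument does its work. Your argument is more transparent about \emph{where} oddness of $p$ enters (via $\varphi(p^c)\ge p-1\ge 2$), and as a bonus it proves the formula for all $j\ge 1$, not only $j\le r_e$. The paper's inductive structure, by contrast, tracks the increment $|\mathcal{L}_{v,j+1}|/|\mathcal{L}_{v,j}|$ directly, which fits naturally with how the levels of specialness are nested but obscures the underlying valuation-theoretic reason the formula holds.
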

	
	\begin{proof}
		We will prove the proposition by induction on $j$. When $j=1$, $M_1$ is a simple $\FF_p[A]$-module, so $M_1$ is the trivial module $\FF_p$. By Definition~\ref{def:special-prime}, all primes ramified in $K/\Q$ are special at level 1, so we need to show $|\calL_{v,1}|=p^2$. 
		Since $K/\Q$ is ramified at $v$, we have either $v \equiv 1 \bmod p$ or $v = p$. Since $p$ is odd, we conclude in both cases that $H^1(G_{\Q_v}, \FF_p) \cong \FF_p ^{\oplus 2}$ by local class field theory. The assumption that $K_w/\Q_v$ is ramified implies that the restriction of any $\varphi \in H^1(G_{\Q_v}, \FF_p) = \Hom(G_{\Q_v}, \FF_p)$ to $G_{K_w}$ is unramified, so 
		$$
		\res_{w/v}(H^1(G_{\Q_v}, \FF_p)) \subseteq H^1_{\nr}(G_{K_w}, \FF_p).
		$$ 
		Hence $\calL_{v,1}=H^1(G_{\Q_v}, \FF_p)$ has size equal to $p^2$.
		
		Assume now that \eqref{eq:local-cond-size} holds for some $j < r_e$. We will prove the proposition for $j+1$ in two cases.
		
		{\bf Case 1: suppose $G_{\Q_v}$ acts trivially on $M_{j+1}$.} Then we have 
		$$
		|H^0(G_{\Q_v},M_{j+1})|=p |H^0(G_{\Q_v}, M_j)|,
		$$
		and $v$ is special at level $j+1$ if and only if $|I_v| M_{j+1}=0$. In this case, we have for both modules $M \in \{M_j, M_{j+1}\}$ the equality
		\[
			|\calL_{v}(M)| = |H^1(G_{\Q_v}/\calT_w, M)|=|\Hom(G_{\Q_v}/\calT_w, M)|=|\Hom(\hat{\Z} \times I_v, M)|= |M| \times |M[|I_v|]|
		\]
		by Remark \ref{rInfRes}. If $v$ is special at level $j+1$, then $|I_v| M = 0$, so
		\[
			|\calL_{v, j+1}|=|M_{j+1}|^2 = p^2 |M_j|^2 = p^2 |\calL_{v, j}|.
		\]
		Then it immediately follows that \eqref{eq:local-cond-size} holds.
		If $v$ is not special at level $j+1$, then $M_{j+1}[|I_v|]$ is contained in the maximal submodule $\mathfrak{m}_e M_{j+1}$ of $M_{j+1}$. Since $e\Z_p[A]$ is a discrete valuation ring, $M_j$ is isomorphic to $\mathfrak{m}_eM_{j+1}$, so we have $|M_j[|I_v|]| = |M_{j+1}[|I_v|]|$. Then the arguments above show that $|\calL_{v, j+1}|= p |\calL_{v, j}|$. We conclude that \eqref{eq:local-cond-size} holds also in this case.
		
		{\bf Case 2: suppose $G_{\Q_v}$ acts nontrivially on $M_{j+1}$.} Then we have the equality 
		\begin{equation}
		\label{eInvariants}
		|H^0(G_{\Q_v}, M_{j+1})|=|H^0(G_{\Q_v}, M_{j})|,
		\end{equation}
		because $M_{j+1}^{G_{\Q_v}}$ is contained inside $\mathfrak{m}_e M_{j+1}\cong M_j$. By assumption, $D_v$ acts nontrivially on $M_{j+1}$, so $v$ is not special at level $j+1$. Therefore it suffices to show the equality $|\calL_{v,j+1}|=|\calL_{v, j}|$, which we do now.
		
		For $M \in \{M_j, M_{j+1}\}$, the inflation-restriction exact sequence associated to $1 \to I_v \to G_{\Q_v}/\calT_w \to \hat{\Z} \to 1$ gives the short exact sequence
		\begin{equation}\label{eq:direct-sum-L}
			1 \longrightarrow H^1(\hat{\Z}, M^{I_v}) \longrightarrow H^1(G_{\Q_v}/\calT_w, M) \longrightarrow H^1(I_v, M)^{G_{\Q_v}}\longrightarrow 1.
		\end{equation}
		By Remark \ref{rInfRes}, observe that the middle term in the exact sequence \eqref{eq:direct-sum-L} has cardinality $|\calL_{v, j}|$ for $M = M_j$ and $|\calL_{v, j+1}|$ for $M = M_{j+1}$. We will now proceed by showing that 
		\begin{equation}
		\label{eClaim1}
		|H^1(\hat{\Z}, M_{j+1}^{I_v})| = |H^1(\hat{\Z}, M_j^{I_v})|
		\end{equation}
		and 
		\begin{equation}
		\label{eClaim2}
		|H^1(I_v, M_{j+1})^{G_{\Q_v}}| = |H^1(I_v, M_j)^{G_{\Q_v}}|, 
		\end{equation}
		which clearly suffices.
		
		By \cite[Proposition~1.7.7]{NSW}, $H^1(\hat{\Z}, M^{I_v})$ is isomorphic to the $\hat{\Z}$-coinvariant $(M^{I_v})_{\hat{\Z}}$ of $M^{I_v}$; hence taking cardinalities yields $|H^1(\hat{\Z}, M^{I_v})|=|(M^{I_v})_{\hat{\Z}}|=|(M^{I_v})^{\hat{\Z}}|=|M^{G_{\Q_v}}|$. By combining this with equation \eqref{eInvariants}, we obtain 
		$$
		|H^1(\hat{\Z}, M_{j+1}^{I_v})|=|M_{j+1}^{G_{\Q_v}}|=|M_j^{G_{\Q_v}}|=|H^1(\hat{\Z}, M_{j}^{I_v})|,
		$$
		thus giving \eqref{eClaim1}.
		
		Let $\sigma$ be a generator of $I_v$. Then there is a functorial isomorphism 
		$$
		H^1(I_v, M) \cong \left\{a \in M \mid \left(\sum_{i=1}^{|I_v|} \sigma^i\right)a=0\right\}/(\sigma-1)M
		$$ 		
		by \cite[Proposition~1.7.1]{NSW}. If $\sum_{i=1}^{|I_v|} \sigma^i$ annihilates $M$, then $H^1(I_v, M) \cong M/(\sigma-1)M=M_{I_v}$. Since $e\Z_p[A]$ is a discrete valuation ring, we have $M_{I_v} \cong M^{I_v}$ and hence
		\begin{equation}
		\label{eNormAnn}
		|H^1(I_v, M)^{G_{\Q_v}}|=|(M_{I_v})^{G_{\Q_v}}|=|M^{G_{\Q_v}}|. 
		\end{equation}
		Therefore, if $\sum_{i=1}^{|I_v|} \sigma^i$ annihilates $M_{j + 1}$, and thus also $M_j$, we deduce from equations \eqref{eInvariants} and \eqref{eNormAnn} that
		$$
		|H^1(I_v, M_{j+1})^{G_{\Q_v}}|=|M_{j+1}^{G_{\Q_v}}|=|M_j^{G_{\Q_v}}|=|H^1(I_v, M_{j})^{G_{\Q_v}}|.
		$$ 
		Hence we get equation \eqref{eClaim2} in this case.
		
		For the rest of the proof, we assume that  $\sum_{i=1}^{|I_v|} \sigma^i$ does not annihilate $M_{j+1}$. By \cite[Lemma~2.6]{Liu}, $1-\sigma$ must annihilate $e\Z_p[A]$, and hence annihilates both $M_j$ and $M_{j + 1}$. Then $I_v$ acts trivially on each $M \in \{M_j, M_{j + 1}\}$, and we conclude that
		\[
			H^1(I_v, M)^{G_{\Q_v}}\cong \left\{a \in M \mid \left(\sum_{i=1}^{|I_v|} \sigma^i\right)a=0\right\}^{G_{\Q_v}} = M^{G_{\Q_v}}[|I_v|].
		\]
		It follows from $M_{j+1}^{G_{\Q_v}}\cong M_j^{G_{\Q_v}}$ (see equation \eqref{eInvariants}) that $H^1(I_v, M_{j+1})^{G_{\Q_v}} \cong H^1(I_v, M_j)^{G_{\Q_v}}$. Thus \eqref{eClaim2} is now proven in this case as well.
	\end{proof}
	
\subsubsection{Bounds for the local conditions of the dual Selmer group}	
	Throughout this section, we let $M:= e\Z_p[A]/\mathfrak{m}_e^d$ for some integer $d>0$ and write $M^*:= \Hom(M, \overline{\Q}^\ast)$. Let $f$ be a surjection $M \to \FF_p$ of $e\Z_p[A]$-modules and
		\[
			f^*: H^1(G_{\Q_v}, \mu_p) \longrightarrow H^1(G_{\Q_v}, M^*)
		\]
		the map induced by the dual map $\mu_p \to M^{*}$ of $f$. Write
		\[
			\calL_v:= \calL_v(M)=\ker \left( H^1(G_{\Q_v}, M) \to H^1(\calT_w, M)^{G_{\Q_v}} \right),
		\]
		and define $\calL_v^{\perp}$ to be the orthogonal complement of $\calL_v$ under the local duality pairing
		\[
			H^1(G_{\Q_v}, M) \times H^1(G_{\Q_v}, M^*) \overset{\cup}{\longrightarrow} H^2(G_{\Q_v}, \overline{\Q_v}^\ast) \overset{\mathrm{inv}}{\longrightarrow} \Q/\Z.
		\] 		
	The goal of this subsection is to prove the following proposition.
	\begin{prop}\label{prop:pushforward<p} Retain the notation from above. Then for all finite places $v \neq p$
		\[
			| {f^*}^{-1} (\calL_v^{\perp})| \leq p.
		\]
	\end{prop}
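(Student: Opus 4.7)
The plan is to use local Tate duality to convert the desired bound into a lower bound on $|f_\ast(\calL_v)|$, where $f_\ast: H^1(G_{\Q_v}, M) \to H^1(G_{\Q_v}, \FF_p)$ is the pushforward induced by $f$. Since $\FF_p^\ast \cong \mu_p$ and $f^\ast$ is dual to $f$, functoriality of the cup product gives the adjunction $\langle f_\ast(\eta), \xi \rangle = \langle \eta, f^\ast(\xi) \rangle$ for $\eta \in H^1(G_{\Q_v}, M)$ and $\xi \in H^1(G_{\Q_v}, \mu_p)$. Hence ${f^\ast}^{-1}(\calL_v^\perp) = f_\ast(\calL_v)^\perp$ inside $H^1(G_{\Q_v}, \mu_p)$, and by local duality the inequality $|{f^\ast}^{-1}(\calL_v^\perp)| \leq p$ becomes $|f_\ast(\calL_v)| \geq |H^1(G_{\Q_v}, \mu_p)|/p$. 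For $v \neq p$, the local Euler characteristic together with local duality yields $|H^1(G_{\Q_v}, \mu_p)| = p \cdot |\mu_p(\Q_v)|$, so the case $v \not\equiv 1 \pmod p$ is immediate. The remaining content is to prove $f_\ast(\calL_v) \neq 0$ when $v \equiv 1 \pmod p$ and $v \neq p$.

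When $v$ is unramified in $K/\Q$, one has $I_v = 0$ and Remark~\ref{rInfRes} identifies $\calL_v$ with $H^1(\hat{\Z}, M) = M/(\Frob_v - 1)M$. Since $f$ is surjective and $\FF_p = e\Z_p[A]/\mathfrak{m}_e$ is a trivial $A$-module (because $A$ is a $p$-group), $f_\ast$ descends to the surjection $M/(\Frob_v - 1)M \twoheadrightarrow \FF_p$. When $v$ is ramified, tameness forces $I_v$ to be a nontrivial cyclic $p$-subgroup of $A$, and Remark~\ref{rInfRes} gives $\calL_v = H^1(\hat{\Z} \times I_v, M)$. I write down explicit 1-cocycles parametrized by pairs $(a, b) \in M \times M$ via $\Frob \mapsto a$ and $\sigma \mapsto b$, subject to the compatibility $(\Frob - 1) b = (\sigma - 1) a$; the norm condition $\bigl(\sum_i \sigma^i\bigr) b = 0$ is vacuous by \cite[Lemma~2.6]{Liu} whenever $\sigma$ acts nontrivially on $e\Z_p[A]$. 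Under $f_\ast$ such a cocycle maps to $(f(a), f(b)) \in H^1(\hat{\Z} \times I_v, \FF_p) \cong \FF_p^{\oplus 2}$.

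Denote by $k$ and $j$ the $\mathfrak{m}_e$-adic valuations of $\sigma - 1$ and $\Frob - 1$ in $e\Z_p[A]$, with the convention that the zero element has valuation $\infty$. If $\sigma$ acts trivially on $M$ (equivalently $k \geq d$), take $(a, b) = (1, 0)$: both compatibility and the norm condition hold, and the $f_\ast$-image is $(1, 0) \neq 0$. Otherwise $k < d$, the norm is automatic, and I exhibit a nontrivial cocycle in each subcase. If $j \geq k$, take $b = 1$ and $a = (\Frob - 1)/(\sigma - 1) \in e\Z_p[A]$, which is well-defined because $(\sigma - 1) e\Z_p[A] = \mathfrak{m}_e^k$; then $f(b) = 1 \neq 0$. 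If $j < k$, take $b = \pi^{k-j}$ for $\pi$ a uniformizer of $e\Z_p[A]$, so that $(\Frob - 1) b$ has valuation exactly $k$ and $a = (\Frob - 1) \pi^{k-j}/(\sigma - 1)$ is a unit of $e\Z_p[A]$, giving $f(a) \neq 0$ despite $f(b) = 0$. The main obstacle is precisely this last regime $j < k$: one must carefully use the DVR structure of $e\Z_p[A]$ and the identity $(\sigma - 1) e\Z_p[A] = \mathfrak{m}_e^k$ to produce a cocycle value at $\Frob$ that survives reduction modulo $\mathfrak{m}_e$.
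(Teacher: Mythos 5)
Your proposal is correct, and it takes a genuinely different route from the paper's. The paper works entirely on the dual ($M^*$) side: it first proves Lemma~\ref{lem:notes-1} (if $\mathrm{Nm}_{\sigma}$ kills $M$ then $\calL_v^\perp = 0$) and Lemma~\ref{lem:notes-2} ($|\ker f^*|\leq p$), combines them to dispose of the case $\mathrm{Nm}_\sigma M = 0$, and for the remaining case (where $\sigma$ acts trivially on $M$ and $M^*$) carries out a diagram chase in the inflation--restriction / long-exact-sequence diagram \eqref{eq:diag-1}. You instead use the cup-product adjunction $x \cup f^*(y) = f(x) \cup y$ — an identity the paper itself establishes later, inside the proof of Lemma~\ref{lLocal} — to rewrite ${f^*}^{-1}(\calL_v^\perp)$ as $f_*(\calL_v)^\perp$, reducing everything to the single statement that $f_*(\calL_v) \neq 0$ when $v\equiv 1\bmod p$. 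You then verify this by exhibiting explicit $1$-cocycles on $\hat{\Z}\times I_v$ using only the DVR structure of $e\Z_p[A]$ (and \cite[Lemma~2.6]{Liu} to dispose of the norm relation). I checked the cocycle constructions in all three regimes ($k\geq d$; $k<d$, $j\geq k$; $k<d$, $j<k$): the compatibility $(\Frob-1)b=(\sigma-1)a$ and the norm condition hold, and $f(a)$ or $f(b)$ is a unit as claimed. The unramified subcase via right-exactness of coinvariants is also fine. Your version is arguably more elementary and makes the only genuinely delicate regime (valuation of $\Frob-1$ strictly below that of $\sigma-1$) visible; the paper's version is more modular — Lemma~\ref{lem:notes-1} is reused independently, e.g.\ in the verification of the hypotheses of Theorem~\ref{tMainAnalytic} — and stays closer to the language used in Lemma~\ref{lem:Selmer-vanish} and the Greenberg--Wiles computation.
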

	
	We need several lemmas.
	
	\begin{lemma}\label{lem:notes-1}
		Let $\sigma$ denote a generator of the inertia subgroup $I_v$ of $K/\Q$. If we have $(\sum_{i=1}^{|I_v|} \sigma^i)M=0$, then $\calL_v^{\perp}=0$.
	\end{lemma}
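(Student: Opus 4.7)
The plan is to convert $\calL_v^{\perp} = 0$ into an equality of cardinalities via Tate local duality, compute both sides using inflation--restriction together with the identification $G_{\Q_v}/\calT_w \cong \hat\Z \times I_v$, and finally use the hypothesis to show that a single ``cyclotomic twist'' vanishes on $M_{I_v}$. (I assume throughout that $v \neq p$, the case relevant for Proposition~\ref{prop:pushforward<p}.) Concretely, the local Euler characteristic and local duality at $v \neq p$ give $|\calL_v^{\perp}| = |M^{D_v}| \cdot |(M^*)^{D_v}| / |\calL_v|$, so the task reduces to showing $|\calL_v| = |M^{D_v}| \cdot |(M^*)^{D_v}|$.

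For the left-hand side, I would compute $|\calL_v| = |H^1(\hat\Z \times I_v, M)|$ via the Hochschild--Serre spectral sequence for $1 \to I_v \to \hat\Z \times I_v \to \hat\Z \to 1$, which degenerates since $\hat\Z$ has cohomological $p$-dimension $1$. This gives $|\calL_v| = |(M^{I_v})^{\hat\Z}| \cdot |H^1(I_v, M)^{\hat\Z}|$, with $(M^{I_v})^{\hat\Z} = M^{D_v}$ and (via the cyclic-group cohomology computation reproduced in the proof of Proposition~\ref{prop:local-cond-size}, using precisely the hypothesis $NM = 0$ for $N := \sum_{i=1}^{|I_v|}\sigma^i$) $H^1(I_v, M) \cong M_{I_v}$. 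Hence $|\calL_v| = |M^{D_v}| \cdot |(M_{I_v})^{\hat\Z}|$.

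For the right-hand side, the pairing with $M$ identifies $(M^*)^{I_v}$ with $\Hom(M_{I_v}, \overline{\Q}^*)$ (using $\chi_{\mathrm{cyc}}|_{I_v} = 1$ for $v \neq p$), and imposing further Frobenius equivariance---where $\mathrm{Frob}$ acts on $M_{I_v}$ as $T := \varphi(\mathrm{Frob})$ and as $v = \chi_{\mathrm{cyc}}(\mathrm{Frob})$ on roots of unity---cuts this down to the Pontryagin dual of $M_{I_v}/(T-v)M_{I_v}$, so $|(M^*)^{D_v}| = |M_{I_v}/(T-v)M_{I_v}|$. Meanwhile $|(M_{I_v})^{\hat\Z}| = |\ker(T - 1 \mid M_{I_v})| = |M_{I_v}/(T-1)M_{I_v}|$ since $|\ker| = |\coker|$ for any endomorphism of a finite abelian group. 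The required identity therefore reduces to showing that $v$ acts as $1$ on $M_{I_v}$, i.e.\ that $T - v$ and $T - 1$ agree as operators on $M_{I_v}$.

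This last step is the main obstacle and is where the hypothesis is crucially used. From the elementary identity $|I_v| = N - (\sigma - 1)Q$ in $\Z[\sigma]$ (obtained by expanding $\sigma^i - 1 = (\sigma - 1)(\sigma^{i-1} + \cdots + 1)$ and summing $i=1,\dots,|I_v|$), the hypothesis $NM = 0$ forces $|I_v| M \subseteq (\sigma - 1) M$, so $|I_v|$ annihilates $M_{I_v}$. Since $I_v$ is a nontrivial $p$-group and $v \neq p$, tame ramification at $v$ gives $v \equiv 1 \pmod{|I_v|}$, hence $v - 1 \in |I_v|\Z$ annihilates $M_{I_v}$, as required. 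This uniformly handles both horns of the dichotomy from \cite[Lemma~2.6]{Liu}---whether $\sigma$ acts trivially on $e\Z_p[A]$ or $N$ already vanishes there---without splitting into cases.
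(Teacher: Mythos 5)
Your proof is correct, but it takes a genuinely different route from the paper's. The paper's argument is direct and does not pass through duality at all: since $v \neq p$, the $\calT_v$-action on $M$ factors through the pro-$p$ completion $\calT_v(p) \cong \Z_p$, so $H^1(\calT_v, M) \cong M_{\calT_v}$; combined with $H^1(I_v, M) \cong M_{I_v}$ (from the hypothesis $NM = 0$), the inflation $H^1(I_v, M) \to H^1(\calT_v, M)$ is forced to be an isomorphism by cardinality, hence the restriction $H^1(\calT_v, M) \to H^1(\calT_w, M)$ vanishes and $\calL_v = H^1(G_{\Q_v}, M)$ outright. You instead convert $\calL_v^\perp = 0$ into the cardinality identity $|\calL_v| = |M^{D_v}|\cdot|(M^*)^{D_v}|$ via local Euler characteristic and Tate duality, compute $|\calL_v| = |M^{D_v}|\cdot|(M_{I_v})^{\hat\Z}|$ by Hochschild--Serre, and then match the two cyclotomic-twist pieces by the arithmetic fact $v \equiv 1 \bmod |I_v|$ together with the observation that $NM = 0$ forces $|I_v|$ to annihilate $M_{I_v}$. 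Both work, and yours in fact re-derives $\calL_v = H^1(G_{\Q_v}, M)$ implicitly (since $|M^{D_v}|\cdot|(M^*)^{D_v}|$ is also $|H^1(G_{\Q_v}, M)|$ by Euler characteristic for $M$). The paper's version is shorter and avoids duality entirely; your version has the merit of making explicit the role of tame ramification ($|I_v| \mid v - 1$) and of treating the two cases of \cite[Lemma~2.6]{Liu} in a unified way, at the cost of invoking more machinery.
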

	
	\begin{proof}
		Since $M$ is a $\Z_p[A]$-module and $A$ is an abelian $p$-group, the $\calT_v$-action on $M$ factors through the pro-$p$ completion $\calT_v(p)$, so there is a natural isomorphism
		\[
			H^1(\calT_v, M) \cong H^1(\calT_v(p), M).
		\]
		As $v\neq p$, $\calT_v(p) \cong \Z_p$, thus $H^1(\calT_v, M) \cong M_{\calT_v(p)} = M_{\calT_v}$.
		By \cite[Proposition~1.7.1]{NSW} and the assumption $(\sum_{i=1}^{|I_v|} \sigma^i)M=0$, we have $H^1(I_v, M) \cong M/(\sigma-1)M=M_{I_v}$. Because the $\calT_v$-action on $M$ factors through $I_v$, the above arguments imply that $H^1(\calT_v, M)$ and $H^1(I_v,M)$ have the same size, so the inflation map 
		\[
			H^1(I_v, M) \longrightarrow H^1(\calT_v, M)
		\]
		is an isomorphism. Thus, the restriction map $H^1(\calT_v, M) \to H^1(\calT_w, M)$ is the zero map. Finally, because the restriction map $H^1(G_{\Q_v}, M) \to H^1(\calT_w, M)$ factors through $H^1(\calT_v, M)$, we have $\calL_v = H^1(G_{\Q_v}, M)$, so its orthogonal complement $\calL_v^{\perp}$ is trivial.
	\end{proof}
	
	\begin{lemma}\label{lem:notes-2}
		Let $v \neq p$ be a finite place of $\Q$. If $G_{\Q_v}$ acts trivially on $M^*$ or $\mu_p \not\subset \Q_v$, then $f^*$ is injective; otherwise, $|\ker f^*| =p$.
	\end{lemma}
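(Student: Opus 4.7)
The plan is to compute $|\ker f^*|$ directly from the long exact sequence associated to $0 \to \mu_p \to M^* \to N \to 0$, where $N := M^*/\mu_p$:
\[
    0 \to \mu_p^{G_{\Q_v}} \to (M^*)^{G_{\Q_v}} \to N^{G_{\Q_v}} \to \ker f^* \to 0,
\]
which yields $|\ker f^*| = |\mu_p^{G_{\Q_v}}| \cdot |N^{G_{\Q_v}}|/|(M^*)^{G_{\Q_v}}|$. The factor $|\mu_p^{G_{\Q_v}}|$ is $p$ if $\mu_p \subset \Q_v$ and $1$ otherwise. The key structural observation that makes the remaining two factors tractable is that the $G_{\Q_v}$-action on $M^*$ is $e\Z_p[A]$-linear: $G_{\Q_v}$ acts on $M$ through $\varphi$ into $A \subset e\Z_p[A]$, and since $e\Z_p[A]$ is commutative, the contragredient action (twisted by the cyclotomic character) commutes with multiplication by every $r \in e\Z_p[A]$. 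Consequently both $(M^*)^{G_{\Q_v}}$ and $N^{G_{\Q_v}}$ are $e\Z_p[A]$-submodules of the cyclic $e\Z_p[A]$-modules $M^*$ and $N$ (which have lengths $d$ and $d-1$ over the DVR $e\Z_p[A]$), so each belongs to the totally ordered chain $0 = M_0^* \subset M_1^* \subset M_2^* \subset \cdots \subset M_d^* = M^*$, where $M_k^*$ denotes the unique $e\Z_p[A]$-submodule of $M^*$ of length $k$ (in particular $M_1^* = \mu_p$ is the socle).

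The crux of the proof is to identify $N$ with $M_{d-1}^*$ as a $G_{\Q_v}$-module. Let $\pi$ be a uniformizer of $e\Z_p[A]$. Multiplication by $\pi$ is an $e\Z_p[A]$-linear, and hence $G_{\Q_v}$-equivariant, endomorphism of $M^*$ whose kernel is the socle $\mu_p = M_1^*$ and whose image is the unique length-$(d-1)$ submodule $\mathfrak{m}_e M^* = M_{d-1}^*$. It therefore induces a $G_{\Q_v}$-equivariant isomorphism $\overline{\pi}: N \xrightarrow{\sim} M_{d-1}^*$. Writing $(M^*)^{G_{\Q_v}} = M_k^*$ for some $0 \le k \le d$, the invariants inside the submodule $M_{d-1}^*$ are
\[
    (M_{d-1}^*)^{G_{\Q_v}} = M_{d-1}^* \cap M_k^* = M_{\min(k,d-1)}^*,
\]
so $|N^{G_{\Q_v}}| = p^{\min(k,d-1)}$ and
\[
    |\ker f^*| = |\mu_p^{G_{\Q_v}}| \cdot p^{\min(k,d-1) - k}.
\]

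The lemma then follows from a short case analysis. If $\mu_p \not\subset \Q_v$, a dévissage along the filtration (each successive quotient $M_i^*/M_{i-1}^*$ is isomorphic to cyclotomic $\mu_p$) forces $(M^*)^{G_{\Q_v}} = 0$, hence $k = 0$ and $|\ker f^*| = 1$. If $G_{\Q_v}$ acts trivially on $M^*$ then $k = d$ (and automatically $\mu_p \subset \Q_v$), so $\min(k,d-1) - k = -1$ and $|\ker f^*| = p \cdot p^{-1} = 1$. In the remaining ``otherwise'' case, where $\mu_p \subset \Q_v$ and the $G_{\Q_v}$-action on $M^*$ is nontrivial, we have $1 \le k \le d-1$, so $\min(k,d-1) = k$ and $|\ker f^*| = p \cdot 1 = p$. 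The main subtlety is to recognize the clean $G_{\Q_v}$-equivariant identification $N \cong M_{d-1}^*$ via multiplication by $\pi$; once it is in place, everything reduces to bookkeeping of submodule lengths over the DVR $e\Z_p[A]$.
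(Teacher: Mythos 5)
Your proof is correct and follows essentially the same route as the paper. Both arguments hinge on the same structural observation: the $G_{\Q_v}$-action on $M^*$ is $e\Z_p[A]$-linear, so $M^*/\mu_p$ is $G_{\Q_v}$-equivariantly isomorphic to the unique maximal submodule $\mathfrak{m}_e M^*$; you produce this isomorphism directly via multiplication by a uniformizer on $M^*$, while the paper obtains it by dualizing $M/\mathfrak{m}_e^{d-1}M \cong \mathfrak{m}_e M$ using divisibility of $\mu_{p^\infty}$, but it is the same map. Your length bookkeeping with $(M^*)^{G_{\Q_v}} = M_k^*$ packages the trivial- and nontrivial-action cases into one formula, which is a mildly cleaner presentation than the paper's two-case split, but the mathematical content is identical.
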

	\begin{proof}
		The sequence $1 \to \mu_p \to M^* \to M^*/\mu_p \to 1$ induces a long exact sequence 
		\[
			1 \longrightarrow \mu_p^{G_{\Q_v}} \longrightarrow (M^*)^{G_{\Q_v}} \longrightarrow (M^*/\mu_p)^{G_{\Q_v}} \overset{\delta}{\longrightarrow} H^1(G_{\Q_v}, \mu_p) \overset{f^*}{\longrightarrow} H^1(G_{\Q_v}, M^*).
		\]
		If $G_{\Q_v}$ acts trivially on $M^*$, then $(M^*)^{G_{\Q_v}} = M^*$ and $(M^*/\mu_p)^{G_{\Q_v}} = M^*/\mu_p$, so $\delta=0$ and hence $f^*$ is injective. If $G_{\Q_v}$ acts nontrivially on $M^*$, then $(M^*)^{G_{\Q_v}}$ is a proper submodule of $M^*$. Note that $M^*$ has a unique maximal submodule, which is $(M/\FF_p)^*$, so $(M^*)^{G_{\Q_v}} \cong ((M/\FF_p)^*)^{G_{\Q_v}}$. Then because $M/\FF_p \cong \mathfrak{m}_e M$ as $e\Z_p[A]$-modules, we have   
		\begin{equation}
		\label{eSmallerFixed}
			(M^*)^{G_{\Q_v}}  \cong ((M/\FF_p)^*)^{G_{\Q_v}} 
			\cong ((\mathfrak{m}_e M)^*)^{G_{\Q_v}} = (M^*/\mu_p)^{G_{\Q_v}},
		\end{equation}
		where the last equality is because $\mu_{p^{\infty}}$ is divisible. Then from the long exact sequence above, we have $|\ker f^*|=|\im \delta|=|\mu_p^{G_{\Q_v}}|$. This is $p$ if $\mu_p\subset \Q_v$ and is $1$ otherwise.
	\end{proof}
	
	We are now ready to prove Proposition~\ref{prop:pushforward<p}.
	
	\begin{proof}[Proof of Proposition~\ref{prop:pushforward<p}]
		Let $\sigma$ be a generator of the inertia subgroup $I_v$. When we are in the case $(\sum_{i=1}^{|I_v|} \sigma^i)M=0$, Lemma~\ref{lem:notes-1} and Lemma~\ref{lem:notes-2} imply that $|{f^*}^{-1}(\calL_v^{\perp})|\leq p$. For the rest of the proof, we assume  $(\sum_{i=1}^{|I_v|} \sigma^i)M\neq 0$, in which case, by \cite[Lemma~2.6]{Liu}, $\sigma$ must act trivially on $M$. 
		Because $\Q_v(\mu_p^{\infty})/\Q_v$ is unramified, $\calT_v$ acts trivially on the $G_{\Q_v}$-module $\mu_{p^{\infty}}$, so $\sigma$ also acts trivially on $M^*$. 
		
		Consider the commutative diagram
		\begin{equation}\label{eq:diag-1}
		\begin{tikzcd}
			& & H^1_{\nr}(G_{\Q_v}, \mu_p) \arrow[hook]{d} \arrow["f_{\nr}^*"]{r} & H^1_{\nr}(G_{\Q_v}, M^*) \arrow[hook]{d} & \\
			\arrow{r} & \left(\dfrac{M^*}{\mu_p}\right)^{G_{\Q_v}} \arrow{d}\arrow["\delta"]{r} & H^1(G_{\Q_v}, \mu_p) \arrow["f^*"]{r} \arrow["\res_1"]{d}& H^1(G_{\Q_v}, M^*) \arrow{r} \arrow["\res_2"]{d} & { } \\
			\arrow{r} & \left(\dfrac{M^*}{\mu_p}\right)^{\calT_v} \arrow["\delta_{\calT}"]{r} & H^1(\calT_v, \mu_p) \arrow["f^*_{\calT}"]{r} & H^1(\calT_v, M^*) \arrow{r} & { }
		\end{tikzcd}
		\end{equation}
		where the rows are the long exact sequences associated to $1 \to \mu_p \to M^* \to M^*/\mu_p \to 1$ and the columns are inflation-restriction exact sequences. Since $\calT_v$ acts trivially on $M^*$, the connecting map $\delta_{\calT}$ is zero, so $f_{\calT}^*$ is injective. By \cite[Theorem~7.2.15]{NSW}, $H^1_{\nr}(G_{\Q_v}, M^*)$ and $H^1_{\nr}(G_{\Q_v}, M)$ are orthogonal complements in the local duality pairing. Since $H_{\nr}^1(G_{\Q_v}, M) \subseteq \calL_v$ by definition of $\calL_v$, we deduce that $\calL_v^\perp \subseteq H_{\nr}^1(G_{\Q_v}, M^\ast)$. In particular, it follows that 
		\begin{equation}\label{eq:Lperp-kerRes}
			\calL_v^{\perp} \subseteq  \ker \res_2.
		\end{equation}
			
		Suppose $G_{\Q_v}$ acts trivially on $M^*$ or $\mu_p \not\subset \Q_v$. 
		The diagram \eqref{eq:diag-1} and the inclusion \eqref{eq:Lperp-kerRes}, together with the injectivity of $f^*_{\calT}$, imply that ${f^*}^{-1}(\calL_v^{\perp}) \subseteq \ker f^*_{\calT} \circ \res_1 = \ker \res_1= H^1_{\nr}(G_{\Q_v}, \mu_p)$, which clearly has size at most $p$. 
		
		
		Suppose $G_{\Q_v}$ acts nontrivially on $M^*$ and $v \equiv 1 \bmod p$. In this case, we have already seen the isomorphism $(M^*)^{G_{\Q_v}} \cong (M^*/\mu_p)^{G_{\Q_v}}$, see equation \eqref{eSmallerFixed}. Then the first few terms of the long exact sequence used in the top row of \eqref{eq:diag-1}, that is $(\mu_p)^{G_{\Q_v}} \hookrightarrow (M^*)^{G_{\Q_v}} \to (M^*/\mu_p)^{G_{\Q_v}} \overset{\delta}{\to} H^1(G_{\Q_v}, \mu_p)$, imply that $|\im \delta|=p$. As $\delta_{\calT}=0$, we see that $\im \delta \subseteq \ker \res_1$, so we have $\im \delta = \ker \res_1$ by comparing cardinalities. Using this and the injectivity of $f_{\calT}^*$, and chasing the diagram \eqref{eq:diag-1}, we see that $\im f^*$ intersects trivially with $H^1_{\nr}(G_{\Q_v}, M^*) = \ker \res_2$. Then ${f^*}^{-1}(\calL_v^{\perp}) = \im \delta$ has order $p$ by \eqref{eq:Lperp-kerRes}, which completes the proof in this case. 
	\end{proof}

We finish this subsection with an auxiliary result that we will need later. For a prime $v \equiv 1 \bmod p$, we have $\mu_p \subset \Q_v$, so 
$$
H^1(G_{\Q_v}, \mu_p)=\Hom(G_{\Q_v}, \mu_p)= \Hom(\langle \sigma_v \rangle, \mu_p) \times \Hom(\langle \Frob_v\rangle, \mu_p). 
$$
Then there is an isomorphism $H^1(G_{\Q_v}, \mu_p) \to \mu_p\times \mu_p$ that sends $x \in \Hom(G_{\Q_v}, \mu_p)$ to $(x(\sigma_v), x(\Frob_v))$. The next lemma describes the image of $f^{*-1}(\mathcal{L}_v^{\perp})$ under this isomorphism.
	
\begin{lemma}
\label{lLocal}
Let $f: M \rightarrow \mathbb{F}_p$ be a surjection of Galois modules, and denote by $f^\ast: \mu_p \rightarrow M^\ast$ the dual map. Let $v$ be a place not dividing $|M|$ with $v \equiv 1 \bmod p$. 

Then ${f^{\ast}}^{-1}(\mathcal{L}_v^\perp)$, viewed as a subgroup of $H^1(G_{\Q_v}, \mu_p) \cong \mu_p \times \mu_p$ by evaluating at $\sigma_v$ and $\Frob_v$, is determined by $f$, $\varphi(\iota_v^\ast(\sigma_v))$ and $\varphi(\iota_v^\ast(\Frob_v))$.
\end{lemma}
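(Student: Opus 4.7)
The plan is to observe that every ingredient entering the construction of ${f^*}^{-1}(\mathcal{L}_v^\perp)$ depends only on the $G_{\Q_v}$-module structures on $M$, $M^*$, and $\mu_p$, together with the fixed surjection $f$, and that these module structures are in turn determined by the two elements $\varphi(\iota_v^*(\sigma_v)), \varphi(\iota_v^*(\Frob_v)) \in A$.

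First I would note that since $v \nmid |M|$ and $M$ is a $p$-group, in particular $v \neq p$, so every $G_{\Q_v}$-action on a $p$-group factors through the pro-$p$ tame quotient of $G_{\Q_v}$. Because $v \equiv 1 \bmod p$, this pro-$p$ tame quotient is topologically generated by $\sigma_v$ and $\Frob_v$. The $G_{\Q_v}$-action on $M$ is obtained by composing $\iota_v^*: G_{\Q_v} \to G_\Q$ with $\varphi: G_\Q \to A$ and then with the fixed $e\Z_p[A]$-module structure on $M$, and hence is fully determined by $\varphi(\iota_v^*(\sigma_v))$ and $\varphi(\iota_v^*(\Frob_v))$. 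Since $\mu_p \subset \Q_v$, the action on $\mu_p$ is trivial, and the action on $M^* = \Hom(M, \overline{\Q}^*)$ is the contragredient of the action on $M$; so the $G_{\Q_v}$-module structures on all three of $M$, $M^*$, and $\mu_p$ are determined by the claimed data.

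Once these Galois module structures are fixed, the cohomology groups $H^1(G_{\Q_v}, M)$ and $H^1(G_{\Q_v}, M^*)$, the unramified-type local condition $\mathcal{L}_v$ (defined as a kernel of restriction to inertia, as in Remark~\ref{rInfRes}), its orthogonal complement $\mathcal{L}_v^\perp$ under the local Tate duality pairing, and the pushforward $f^*: H^1(G_{\Q_v}, \mu_p) \to H^1(G_{\Q_v}, M^*)$ induced by $f$ are all canonical. Likewise, the evaluation map $H^1(G_{\Q_v}, \mu_p) \to \mu_p \times \mu_p$, $x \mapsto (x(\sigma_v), x(\Frob_v))$, is canonical given the chosen topological generators. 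Combining these observations, the subgroup ${f^*}^{-1}(\mathcal{L}_v^\perp)$ of $\mu_p \times \mu_p$ depends only on $f$, $\varphi(\iota_v^*(\sigma_v))$, and $\varphi(\iota_v^*(\Frob_v))$.

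The main ``obstacle'' here is not substantive but organizational: one simply needs to verify that each step in the construction (defining the Galois actions, forming cohomology, taking orthogonal complement under local duality, pushing forward via $f^*$, and evaluating at the chosen generators) is natural in the underlying Galois module data. No deeper ingredient is needed beyond this naturality and the fact that for $v \neq p$ with $v \equiv 1 \bmod p$, the pro-$p$ tame quotient of $G_{\Q_v}$ is controlled by $\sigma_v$ and $\Frob_v$.
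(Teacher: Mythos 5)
Your proposal has a genuine gap: the claim that the $G_{\Q_v}$-module structure on $M^*$ is determined by $\varphi(\iota_v^*(\sigma_v))$ and $\varphi(\iota_v^*(\Frob_v))$ is false. The module $M^* = \Hom(M, \overline{\Q}^*) = \Hom(M, \mu_{p^n})$ (where $p^n$ is the exponent of $M$) carries the contragredient action twisted by the cyclotomic character, and the action of $\Frob_v$ on $\mu_{p^n}$ is raising to the $v$-th power. Since the hypothesis is only $v \equiv 1 \bmod p$ and not $v \equiv 1 \bmod p^n$, this action genuinely depends on $v \bmod p^n$, which is extra data not captured by $\varphi(\iota_v^*(\sigma_v))$ and $\varphi(\iota_v^*(\Frob_v))$. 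Consequently, $\mathcal{L}_v^\perp \subseteq H^1(G_{\Q_v}, M^*)$ is not a priori determined by the stated data, and the "everything is canonical" step breaks down.

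The paper's proof is specifically structured to sidestep this issue: rather than describing $\mathcal{L}_v^\perp$ directly, it uses the cup-product adjunction $x \cup f^*(y) = f(x) \cup y$ to deduce $f^{*-1}(\mathcal{L}_v^\perp) = f(\mathcal{L}_v)^\perp$, and then computes $f(\mathcal{L}_v)$ as a subgroup of $H^1(G_{\Q_v}, \FF_p) \cong \FF_p \times \FF_p$. The subgroup $f(\mathcal{L}_v)$ is determined by the module $M$ (not $M^*$), which genuinely factors through $\varphi \circ \iota_v^*$, together with the explicit identification $G_{\Q_v}/\calT_w \cong \hat\Z \times \Z/d\Z$. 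The cyclotomic dependence on $v \bmod p^n$ therefore never enters, because the only root-of-unity module appearing in the final computation is $\mu_p$, on which $G_{\Q_v}$ does act trivially since $v \equiv 1 \bmod p$. To repair your argument you would need precisely this reduction: replace the study of $\mathcal{L}_v^\perp$ inside $H^1(G_{\Q_v}, M^*)$ by the study of $f(\mathcal{L}_v)$ inside $H^1(G_{\Q_v}, \FF_p)$, and only then take orthogonal complements.
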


	\begin{proof}
		By Remark \ref{rInfRes}, we have
		\[
			\calL_v =  \text{inf}\left(H^1(G_{\Q_v}/\calT_w, M)\right).
		\]
		Write $d$ for the order of $\varphi(\iota_v^*(\sigma_v))$. Since $\Gal(K_w/\Q_v)$ is abelian and $v\neq p$, $G_{\Q_v}/\calT_w$ is isomorphic to $\hat{\Z} \times \Z/d\Z$. More precisely, the exact sequence
		\[
			1 \longrightarrow \calT_v/\calT_w \longrightarrow G_{\Q_v}/\calT_w \longrightarrow G_{\Q_v}/\calT_v \longrightarrow 1
		\]
		splits via the section $s$ sending the Frobenius element $\Frob \in G_{\Q_v}/\calT_v$ to the image of $\Frob_v$ inside $G_{\Q_v}/\calT_w$. This splitting induces a commutative diagram
		\begin{equation*}
		\begin{tikzcd}
			1 \arrow[r] & \calT_v/\calT_w \arrow[r] & G_{\Q_v}/\calT_w \arrow[r] & G_{\Q_v}/\calT_v \arrow[r] & 1 \\
			1 \arrow[r] & \Z/d\Z \arrow[r] \arrow[u, "g_1"] & \hat{\Z} \times \Z/d\Z \arrow[r] \arrow[u, "g_2"] & \hat{\Z} \arrow[r] \arrow[u, "g_3"] & 1
		\end{tikzcd}
		\end{equation*}
		where $g_1$ sends $1$ to the image of $\sigma_v$ inside $\calT_v/\calT_w$, $g_3$ sends $1$ to $\Frob$ and $g_2$ sends $(1, 0)$ to the image of $\Frob_v$ inside $G_{\Q_v}/\calT_w$ and $(0, 1)$ to the image of $\sigma_v$ inside $G_{\Q_v}/\calT_w$. So we can write $\calL_v$ as (the inflation of) $H^1(\hat{\Z} \times \Z/d\Z, M)$ for which the $\hat{\Z} \times \Z/d\Z$-action on $M$ is determined by $\varphi(\iota_v^*(\Frob_v))$ and $\varphi(\iota_v^*(\sigma_v))$. Indeed, under the identifications above, the action of $(1, 0)$ is precisely the action of $\varphi(\iota_v^*(\Frob_v))$, while the action of $(0, 1)$ is exactly the action of $\varphi(\iota_v^*(\sigma_v))$. 
		
		Then under the map $f: H^1(G_{\Q_v}, M) \to H^1(G_{\Q_v}, \FF_p)$, the image of $\calL_v$ is the image of the map $H^1(\hat{\Z}\times \Z/d\Z, M) \to H^1(\hat{\Z} \times \Z/d\Z, \FF_p)$ induced by $f:M \to \FF_p$, where we are identifying the group $\hat{\Z}\times \Z/d\Z$ with the quotient $G_{\Q_v}/\calT_w$ of $G_{\Q_v}$ as above. Then it follows that $f(\calL_v)$, viewed as a subgroup of $\FF_p \times \FF_p$ by evaluating at $\sigma_v$ and $\Frob_v$, is determined by $f$, $\varphi(\iota_v^{\ast}(\sigma_v))$ and $\varphi(\iota_v^{\ast}(\Frob_v))$. Now consider the following commutative diagram
		\begin{equation*}
		\begin{tikzcd}[column sep = 0.1]
			H^1(G_{\Q_v}, M) \arrow[d, "f"] & \times & H^1(G_{\Q_v}, M^\ast) & \xlongrightarrow{\cup} & H^2(G_{\Q_v}, \mu_{p^\infty}) \arrow[d, "="] \cong \Q_p/\Z_p
\\
			H^1(G_{\Q_v}, \mathbb{F}_p) & \times & H^1(G_{\Q_v}, \mu_p) \arrow[u, "f^\ast"] & \xlongrightarrow{\cup} & H^2(G_{\Q_v}, \mu_{p^\infty}) \cong \Q_p/\Z_p.
		\end{tikzcd}
		\end{equation*}
		For every $x \in H^1(G_{\Q_v}, M)$ and $y \in H^1(G_{\Q_v}, \mu_p)$, we have $x \cup f^\ast(y)=f(x) \cup y$; so we obtain ${f^{\ast}}^{-1}(\calL_v^{\perp})=f(\calL_v)^{\perp}$. Finally, when given $f(\calL_v)$ as a subgroup of $\FF_p \times \FF_p$ as above, one can compute $f(\calL_v)^{\perp}$, viewed as a subgroup of $\mu_p \times \mu_p$ by evaluating at $\sigma_v$ and $\Frob_v$. Then the proof is complete.	
	\end{proof}

\subsection{Vanishing of the dual Selmer group}
	Recall that $\Omega(\Q)$ denotes the set of all (nonarchimedean or archimedean) places of $\Q$.
	
	\begin{mydef}
		Let $M$ be a finite $G_{\Q}$-module. For every $\calL=(\calL_v)_{v\in \Omega(\Q)}$ with $\calL_v$ a subgroup of $H^1(G_{\Q_v}, M)$ such that $\calL_v = H^1_{\textup{nr}}(G_{\Q_v}, M)$ for all but finitely many $v$, we define 
		\[
			\Sel_{\calL}(G_{\Q}, M):=\ker \left( H^1(G_{\Q}, M) \longrightarrow \prod_{v\in \Omega(\Q)} H^1(G_{\Q_v}, M)/ \calL_v\right).
		\]
		
	\end{mydef}
	
	For every finite prime $v$ of $\Q$, recall that $\calL_{v,j}$ denotes the local condition defined in \eqref{eq:L-v-j}. At the archimedean place $v=\infty$, we define $\calL_{\infty, j}:= H^1(G_{\R}, M_j)$, which is 0 because $p$ is odd. Denote by $f_r: M_r \rightarrow \mathbb{F}_p$ the natural surjection of Galois modules.
		
	\begin{lemma}\label{lem:Selmer-vanish}
		Let $M_j$ be the $\Z_p[G_{\Q}]$-module defined in \eqref{eq:M_j}. Let $\calL_j:=(\calL_{v,j})_{v \in \Omega(\Q)}$ and $\calL_j^{\perp}:=(\calL_{v,j}^{\perp})_{v \in \Omega(\Q)}$.  
		If $\Sel_{{f_r^*}^{-1}(\calL_r^{\perp})}(G_{\Q}, \mu_p)=0$ for all $1 \leq r \leq j$, then $\Sel_{\calL_j^{\perp}}(G_{\Q}, M_j^*)=0$.
	\end{lemma}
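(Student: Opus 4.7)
I would argue by induction on $j$. For the base case $j=1$, one has $M_1 = \FF_p$ and $M_1^\ast = \mu_p$, and the map $f_1: M_1 \to \FF_p$ is the identity, so $(f_1^\ast)^{-1}(\calL_1^\perp) = \calL_1^\perp$ as subgroups of $H^1(G_{\Q_v}, \mu_p)$ at every place. The desired vanishing $\Sel_{\calL_1^\perp}(G_\Q, M_1^\ast) = 0$ is then precisely the hypothesis for $r = 1$.

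For the inductive step, I would exploit the short exact sequence of $e\Z_p[A][G_\Q]$-modules
\[
0 \longrightarrow M_{j-1} \xrightarrow{g} M_j \xrightarrow{f_j} M_1 \longrightarrow 0,
\]
where $g$ identifies $M_{j-1}$ with $\mathfrak{m}_e M_j$ via multiplication by a uniformizer (this is an isomorphism because $e\Z_p[A]$ is a DVR, cf.~the argument already used in the proof of Proposition \ref{prop:local-cond-size}). Dualizing gives $0 \to \mu_p \xrightarrow{f_j^\ast} M_j^\ast \xrightarrow{g^\ast} M_{j-1}^\ast \to 0$. Since the local conditions $\calL_v(\cdot) = \ker(H^1(G_{\Q_v}, \cdot) \to H^1(\calT_w, \cdot)^{G_{\Q_v}})$ are functorial in the coefficient module, one has $g_\ast(\calL_{v,j-1}) \subseteq \calL_{v,j}$, and transposing this under local Tate duality (using the compatibility $g_\ast(a) \cup b = a \cup g^\ast_\ast(b)$ of cup product with functoriality) yields $g^\ast_\ast(\calL_{v,j}^\perp) \subseteq \calL_{v,j-1}^\perp$ for every place $v$.

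Now let $x \in \Sel_{\calL_j^\perp}(G_\Q, M_j^\ast)$. Then $g^\ast_\ast(x) \in H^1(G_\Q, M_{j-1}^\ast)$ has all localizations in $\calL_{v,j-1}^\perp$, so it lies in $\Sel_{\calL_{j-1}^\perp}(G_\Q, M_{j-1}^\ast)$, which vanishes by the inductive hypothesis (the hypotheses for $r=1,\dots,j-1$ are a subset of those given). The long exact sequence in cohomology then furnishes $y \in H^1(G_\Q, \mu_p)$ with $(f_j^\ast)_\ast(y) = x$; at each place $v$ we have $(f_j^\ast)_\ast(y_v) = x_v \in \calL_{v,j}^\perp$, so $y_v \in (f_j^\ast)^{-1}(\calL_{v,j}^\perp)$. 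Hence $y \in \Sel_{(f_j^\ast)^{-1}(\calL_j^\perp)}(G_\Q, \mu_p)$, which is zero by the hypothesis at $r = j$. Thus $x = 0$, completing the induction.

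\textbf{Expected difficulties.} I do not anticipate a serious obstacle — the argument is a clean devissage combined with a diagram chase. The two points requiring real care are (i) verifying that $(f_j^\ast)^{-1}(\calL_j^\perp)$ is a genuine Selmer structure in the paper's sense (i.e.~coincides with the unramified condition at almost all $v$), which follows because $f_j^\ast: \mu_p \to M_j^\ast$ is a morphism of unramified $G_{\Q_v}$-modules at all $v \nmid p$ that are unramified in $K$, combined with local Tate duality of the unramified subgroups; and (ii) keeping straight which map is dual to which, in particular distinguishing the uniformizer-inclusion $g: M_{j-1}\hookrightarrow M_j$ from the quotient maps $f_r: M_r \twoheadrightarrow \FF_p$ used to formulate the hypothesis.
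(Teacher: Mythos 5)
Your proof is correct and follows essentially the same route as the paper's: dévissage along the short exact sequence $0 \to \mu_p \to M_j^\ast \to M_{j-1}^\ast \to 0$, using cup-product adjunction to show $g^\ast_\ast(\calL_{v,j}^\perp) \subseteq \calL_{v,j-1}^\perp$, followed by a diagram/element chase. The paper packages the chase into one large commutative diagram with Selmer groups in the top row, but the individual steps match yours exactly.
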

	
	\begin{proof}
		The lemma obviously holds for $j=1$ because $M_1^*=\mu_p$ and $f_1^*$ is an isomorphism. Suppose the lemma holds for some $j=k$, and we will prove the lemma for $j=k+1$. Recall that $M_{k+1}^*/\mu_p \cong M_k^*$. Associated to the short exact sequence
		\[
			1 \longrightarrow \mu_p \overset{f_{k + 1}^*}{\longrightarrow} M_{k+1}^* \overset{g}{\longrightarrow} M_k^* \longrightarrow 1,
		\]
		we have the commutative diagram (of solid arrows)
		$$
		\begin{tikzcd}
			\Sel_{f_{k + 1}^{*-1}(\calL_{k+1}^{\perp})}(G_{\Q},\mu_p) \arrow[hook]{d} \arrow[dashed]{r} & \Sel_{\calL_{k+1}^{\perp}}(G_{\Q}, M_{k+1}^*) \arrow[hook]{d} \arrow[dashed]{r} & \Sel_{g(\calL_{k+1}^{\perp})}(G_{\Q}, M_k^*) \arrow[hook]{d} \\
			H^1(G_{\Q}, \mu_p) \arrow{d}\arrow{r} &  H^1(G_{\Q}, M_{k+1}^*) \arrow{d}\arrow{r} & H^1(G_{\Q}, M_k^*)  \arrow{d} \\
			\prod\limits_v\dfrac{H^1(G_{\Q_v}, \mu_p)}{f_{k + 1}^{*-1}(\calL_{v,k+1}^{\perp})} \arrow[hook]{r}& \prod\limits_v\dfrac{H^1(G_{\Q_v}, M_{k+1}^*)}{\calL_{v,k+1}^{\perp}} \arrow{r} & \prod\limits_v\dfrac{H^1(G_{\Q_v}, M_k^*)}{g(\calL_{v,k+1}^{\perp})}
		\end{tikzcd}
		$$
		where the products in the last row run over all places of $\Q$ and $g(\calL_{k+1}^{\perp})$ is the image of $\calL_{k+1}^{\perp}$ under the map $H^1(G_{\Q_v}, M_{k+1}^*) \to H^1(G_{\Q_v}, M_k^*)$ induced by $g$. By chasing this diagram, all the dashed maps in the top row exist and are exact at the middle term. Since $\Sel_{f_{k + 1}^{*-1}(\calL_{k+1}^{\perp})}(G_{\Q}, \mu_p)$ is assumed to be zero, we need to prove $\Sel_{g(\calL_{k+1}^{\perp})}(G_{\Q}, M_k^*)=0$. The induction hypothesis says $\Sel_{\calL_k^{\perp}}(G_{\Q_v}, M_k^*)=0$, so it suffices to prove $g(\calL_{v, k+1}^{\perp}) \subseteq \calL_{v, k}^{\perp}$ for every place $v$.
		
		Consider the dual map $g^*: M_k \to M_{k+1}$ of $g$. From the diagram
		\[\begin{tikzcd}
			H^1(G_{\Q_v}, M_k) \arrow["\res"]{r} \arrow["g^*"]{d} & H^1(G_{K_w}, M_k) \arrow["g^*"]{d} \\
			H^1(G_{\Q_v}, M_{k+1}) \arrow["\res"]{r} & H^1(G_{K_w}, M_{k+1})
		\end{tikzcd}\]
		and the fact that $g^*(H^1_{\nr}(G_{K_w}, M_k)) \subseteq H^1_{\nr}(G_{K_w}, M_{k+1})$, we have 
		\begin{equation}\label{eq:g-L}
			g^*(\calL_{v,k}) \subseteq \calL_{v, k+1}.
		\end{equation}
		Finally, consider the diagram
		\begin{equation*}
		\begin{tikzcd}[column sep = 0.1]
			H^1(G_{\Q_v}, M_{k+1}) & \times & H^1(G_{\Q_v}, M_{k+1}^\ast) \arrow[d, "g"]& \xlongrightarrow{\cup} & H^2(G_{\Q_v}, \mu_{p^\infty}) \arrow[d, "="] \cong \Q_p/\Z_p
\\
			H^1(G_{\Q_v}, M_k) \arrow[u, "g^\ast"] & \times & H^1(G_{\Q_v}, M_k^*)  & \xlongrightarrow{\cup} & H^2(G_{\Q_v}, \mu_{p^\infty}) \cong \Q_p/\Z_p.
		\end{tikzcd}
		\end{equation*}
		For all $x \in \calL_{v, k+1}^{\perp}\subseteq H^1(G_{\Q_v}, M_{k + 1}^*)$ and all $y \in \calL_{v,k}$, \eqref{eq:g-L} implies that $g^*(y) \subseteq \calL_{v, k+1}$, so $g^*(y) \cup x =0$. Then from the above diagram one sees that $y \cup g(x) = g^*(y) \cup x =0$. Letting $y$ vary in $\calL_{v, k}$, we deduce that $g(x) \in \calL_{v,k}^{\perp}$, and hence $g(\calL_{v,k+1}^{\perp}) \subseteq \calL_{v, k}^{\perp}$.
	\end{proof}
	
\section{Expressing class groups as Selmer groups}
\label{sSelmer}
In our previous section we have developed the necessary local machinery for our arguments. This section contains the global input from algebraic number theory that will be relevant for us. This consists of two steps with corresponding subsections.

In the first step, we express the relevant class group for us as a Selmer group. We then apply the Greenberg--Wiles formula to compute its size. This formula contains the dual Selmer group, and our future sections are entirely devoted to showing statistical vanishing of this ``dual Selmer group'', thus leading to an algebraic formula valid for 100\% of the $A$-extensions. However, when relating the class group to a Selmer group, a certain second cohomology group shows up in the process.

In the second step, we give a statistical formula for this second cohomology group. Unlike the previous step, it will be fairly straightforward to show that this is a statistical formula, so we do this right away.

Throughout this section, we maintain our convention that $p$ is odd and that $A$ is a finite abelian $p$-group.

\subsection{Algebraic formula for class groups}
We start with a definition.
	
\begin{mydef}
\label{def:R}
Let $D$ be a finite abelian group, $I$ a subgroup of $D$ such that $D/I$ is cyclic, and $M$ a finite $D$-module. Define a profinite group $\widehat{D}$ to be the fiber product
\[
\widehat{D} := \hat{\Z} \times_{D/I} D,
\]
where $\hat{\Z}\to D/I$ and $D\to D/I$ are quotient maps. Then, for a finite $D$-module $M$, we define $R(D,I,M)$ to be the kernel of the inflation map
\[
H^2(D, M) \longrightarrow H^2(\widehat{D}, M).
\]
Note that $R(D,I,M)$ does not depend on the choice of the quotient map $\hat{\Z}\to D/I$.
\end{mydef}

\begin{lemma}
\label{lStatisticalFormula}
Let $p$ be odd and let $A$ be a finite abelian $p$-group. Let $M$ be a finite $A$-module. Let $\varphi\in \mathrm{Epi}(G_{\Q}, A)$ and write $K:= \overline{\Q}^{\ker(\varphi)}$. For each place $v$, we set $\calL_v:= \calL_v(M)$. Then 
	\begin{equation}\label{eq:Cl-Sel}
		\frac{|\mathrm{Sel}_{\calL}(G_{\Q}, M)|}{|H^1(A, M)|}\leq |\Hom_{\mathrm{nr}}(G_K, M)^{A}| \leq \frac{|N| |\mathrm{Sel}_{\calL}(G_{\Q}, M)|}{|H^1(A, M)|}
	\end{equation}
with
\[
N := \ker \left(H^2(A, M) \longrightarrow \bigoplus_{v \in \Omega(\Q)} \frac{H^2(D_v, M)}{R(D_v,I_v, M)}\right),
\]
where $\Hom_{\nr}(G_K,M)^A$ is the set of all $A$-equivariant homomorphisms $G_K \to M$ that are unramified at every place in $\Omega(\Q)$. Furthermore, if we assume that $\mathrm{Sel}_{\mathcal{L}^\perp}(G_\Q, M^\ast) = 0$ with $\calL^{\perp} := (\calL_v^{\perp})_{v \in \Omega(\Q)}$, then the upper bound in \eqref{eq:Cl-Sel} is an equality.
\end{lemma}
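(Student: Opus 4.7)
The plan is to combine the inflation--restriction exact sequence for $1 \to G_K \to G_\Q \to A \to 1$ with an analysis of local transgressions at each place $v$, in order to produce an exact sequence
\[
0 \longrightarrow H^1(A, M) \longrightarrow \Sel_{\calL}(G_\Q, M) \longrightarrow \Hom_{\nr}(G_K, M)^A \xrightarrow{\tau} N.
\]
Both inequalities in \eqref{eq:Cl-Sel} then follow immediately by counting, and the equality case under $\Sel_{\calL^\perp}(G_\Q, M^*) = 0$ will reduce to surjectivity of $\tau$ onto $N$, itself a consequence of Poitou--Tate duality.

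For the left half of the sequence, I would first note that by definition of $\calL_v$ the restriction to $G_K$ sends $\Sel_{\calL}(G_\Q, M)$ into $\Hom_{\nr}(G_K, M)^A$; the classical $5$-term sequence then gives an injection $\Sel_{\calL}(G_\Q, M)/H^1(A, M) \hookrightarrow \Hom_{\nr}(G_K, M)^A$ whose image lies inside $\ker\tau$, and in fact equals $\ker\tau \cap \Hom_{\nr}(G_K, M)^A$ because any global lift of a class in $\Hom_{\nr}(G_K, M)^A$ automatically satisfies the Selmer conditions (again by the definition of $\calL_v$). For the containment $\tau(\Hom_{\nr}(G_K, M)^A) \subseteq N$, I would use naturality of transgression under restriction: for each $v$, $\res_v(\tau(\varphi)) = \tau_v(\res_v(\varphi))$, and since $\res_v(\varphi)$ factors through $G_{K_w}/\calT_w$, this agrees with the transgression of $1 \to G_{K_w}/\calT_w \to G_{\Q_v}/\calT_w \to D_v \to 1$. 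Under the identification $G_{\Q_v}/\calT_w \cong \widehat{D_v}$ from Remark~\ref{rInfRes}, the image of the latter transgression is exactly $R(D_v, I_v, M)$, so $\tau(\varphi) \in N$.

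For the equality case, the argument proceeds in two steps. Given $\alpha \in N$, I would first lift it to $\varphi \in H^1(G_K, M)^A$. By the $5$-term sequence this requires $\inf(\alpha) = 0$ in $H^2(G_\Q, M)$; for each $v$, $\res_v(\inf(\alpha))$ factors through $H^2(\widehat{D_v}, M) \to H^2(G_{\Q_v}, M)$, and $\res_v(\alpha) \in R(D_v, I_v, M)$ already vanishes in $H^2(\widehat{D_v}, M)$ by definition. Hence $\inf(\alpha) \in \Sha^2(\Q, M)$, which is Poitou--Tate dual to $\Sha^1(\Q, M^*) \subseteq \Sel_{\calL^\perp}(G_\Q, M^*) = 0$, forcing $\inf(\alpha) = 0$ and producing $\varphi$. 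Second, I would correct $\varphi$ to an unramified class by subtracting $\res(\xi)$ for some $\xi \in H^1(G_\Q, M)$, without altering $\tau(\varphi) = \alpha$. At each $v$, the equality $\tau_v(\res_v(\varphi)) = \res_v(\alpha) \in R(D_v, I_v, M)$ equals the transgression of an unramified class $\beta_v$, so $\res_v(\varphi) - \beta_v = \res_{w/v}(\gamma_v)$ for some $\gamma_v \in H^1(G_{\Q_v}, M)$; thus the image of $\res_v(\varphi)$ in $H^1(G_{K_w}, M)/H^1_{\nr}(G_{K_w}, M)$ equals the image of $\gamma_v$ under the natural injection $H^1(G_{\Q_v}, M)/\calL_v \hookrightarrow H^1(G_{K_w}, M)/H^1_{\nr}(G_{K_w}, M)$. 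It then suffices to find $\xi$ whose image in $\bigoplus_v H^1(G_{\Q_v}, M)/\calL_v$ equals $(\gamma_v \bmod \calL_v)_v$, which is the standard Poitou--Tate assertion that this local-global restriction map is surjective precisely when $\Sel_{\calL^\perp}(G_\Q, M^*) = 0$.

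The main difficulty lies in the patching step of the equality case: one needs to set up the Poitou--Tate duality sequence with Selmer structures so as to extract from the single hypothesis $\Sel_{\calL^\perp}(G_\Q, M^*) = 0$ both facts we need -- namely $\Sha^2(\Q, M) = 0$ and the surjectivity of $H^1(G_\Q, M) \to \bigoplus_v H^1(G_{\Q_v}, M)/\calL_v$ -- and then to check that the concretely chosen local classes $\gamma_v$ indeed agree almost everywhere with zero modulo $\calL_v$, so that the patching is legitimate.
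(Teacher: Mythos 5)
Your proposal is correct and follows essentially the same route as the paper: both extract the four-term exact sequence $0 \to H^1(A,M) \to \Sel_\calL(G_\Q,M) \to \Hom_{\nr}(G_K,M)^A \xrightarrow{\tau} N$ from the inflation--restriction diagram (with the key observation that any global lift of an unramified class automatically lies in the Selmer group), identify the local transgression image with $R(D_v,I_v,M)$ via $G_{\Q_v}/\calT_w \cong \widehat{D_v}$, and prove surjectivity of $\tau$ under the vanishing hypothesis by first lifting $\alpha \in N$ through $\Sha^2(G_\Q,M)=0$ and then patching locally using surjectivity of $H^1(G_\Q,M) \to \bigoplus_v H^1(G_{\Q_v},M)/\calL_v$. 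The only cosmetic difference is that you invoke this surjectivity directly from the Poitou--Tate exact sequence with Selmer conditions, whereas the paper obtains it by applying the Greenberg--Wiles formula twice; these are equivalent and both standard.
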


\begin{proof}
We have the following commutative diagram 
\begin{equation}
\label{eq:res-diag}
\begin{tikzcd}[column sep = 0.195in]
H^1(A, M) \arrow[hook]{r} \arrow{d}& H^1(G_{\Q}, M) \arrow{d} \arrow["\psi"]{r} & H^1(G_K, M)^{A} \arrow["\tg"]{r}\arrow{d} &H^2 (A, M) \arrow["\inf_1"]{r}\arrow["\res_v^1"]{d} & H^2(G_{\Q}, M) \arrow["\res_v^2"]{d} \\
H^1(D_v, M) \arrow[hook]{r} & H^1(G_{\Q_v}, M) \arrow{r} & H^1(G_{K_w}, M)^{D_v} \arrow["\tg_w"]{r} & H^2(D_v, M) \arrow["\inf_v^1"]{r} & H^2(G_{\Q_v}, M),
\end{tikzcd}
\end{equation}
where the rows are the inflation-restriction short exact sequence and the vertical maps are restriction maps. 
		
When $v$ is nonarchimedean, we have that $\calL_v = \text{inf}\left(H^1(G_{\Q_v}/\calT_w, M)\right)$ by Remark \ref{rInfRes}. It follows by the group structure of $G_{\Q_v}$ that $G_{\Q_v}/\calT_w \cong \widehat{D_v}$, with $\widehat{D_v}$ defined in Definition~\ref{def:R} for $D=D_v$ and $I=I_v$. Restricting everything to Selmer groups, \eqref{eq:res-diag} gives
\begin{equation}
\label{eq:Selmer-diag}
\begin{tikzcd}[column sep = 0.193in]
H^1(A, M) \arrow[hook]{r} \arrow{d} & \mathrm{Sel}_{\calL}(G_{\Q}, M) \arrow{r} \arrow{d} & \Hom_{\nr}(G_K, M)^A \arrow["\tg'"]{r}\arrow{d} & H^2(A, M) \arrow["\res_v^1"]{d} & \\
H^1(D_v, M) \arrow[hook]{r} & \calL_v \arrow{r} & H^1(\hat{\Z}, M)^{D_v} \arrow["\tg'_w"]{r} & H^2(D_v, M) \arrow["\inf_v^2"]{r} & H^2( \widehat{D_v}, M),
\end{tikzcd}
\end{equation}
in which the lower row is obtained by applying the inflation-restriction exact sequence on 
$$
1 \to \ker\left(\hat{\Z} \rightarrow D_v/I_v\right) \to \widehat{D_v} \to D_v \to 1.
$$
We note that $\ker(\hat{\Z} \rightarrow D_v/I_v) \cong \hat{\Z}$, and that the two transgression maps in \eqref{eq:Selmer-diag} are the restrictions of the transgression maps in \eqref{eq:res-diag}. Both rows are exact, and each term in this diagram naturally embeds into the corresponding terms in \eqref{eq:res-diag} by inflation maps. The map $\inf_v^1$ is the composition of $\inf_v^2$ with the inflation map $H^2(\widehat{D_v}, M) \to H^2(G_{\Q_v}, M)$. Then by chasing the diagram \eqref{eq:Selmer-diag}, we see that $\res_v^1(\im \tg') \subseteq \im \tg'_w = \ker \inf_v^2$, and hence 
\begin{equation}
\label{eq:im-nonarch}
\im \tg' \subseteq \ker {\inf}_v^2 \circ \res_v^1 = \ker \left( H^2(A, M) \to \frac{H^2(D_v, M)}{R(D_v, I_v, M)} \right).
\end{equation}
Note that equation \eqref{eq:im-nonarch} is also true when $v$ is archimedean, as $D_\infty$ is trivial for $p$ odd. 

Using \eqref{eq:im-nonarch} for all $v$, we deduce that $\im \tg' \subseteq N$. Hence, by using the top row of the diagram \eqref{eq:Selmer-diag}, we see that 
	\[
		\frac{|\Sel_{\calL}(G_{\Q},M)|}{|H^1(A,M)|} \leq |\Hom_{\nr}(G_K, M)^A| \leq \frac{|N| |\Sel_{\calL}(G_{\Q},M)|}{|H^1(A,M)|},
	\]
	so we obtain \eqref{eq:Cl-Sel}. To prove the last claim in the lemma, it suffices to show that $N = \im \tg'$. Since equation \eqref{eq:im-nonarch} already establishes one inclusion, it remains to show that every class $x \in N$ is contained in $\im \tg'$. 

Recall that $\inf_v^1$ factors through $\inf_v^2$, so the image of $x$ in $H^2(G_{\Q_v}, M)$ is zero for any $v \in \Omega(\Q)$, and hence $\inf_1(x) \in \cap_v \ker\res_v^2$. Note that our assumption $\mathrm{Sel}_{\mathcal{L}^\perp}(G_\Q, M^\ast) = 0$ implies that $\Sha^1(G_\Q, M^\ast) = 0$ and hence $\Sha^2(G_\Q, M) = 0$ by Poitou--Tate duality \cite[Theorem 8.6.7]{NSW}. Therefore the map
\[
H^2(G_{\Q}, M) \to \bigoplus_{v} H^2(G_{\Q_v}, M)
\]
is injective, so $x \in \ker \inf_1 = \im \tg$. 

Let $y \in H^1(G_K, M)^A$ such that $\tg(y)=x$, and let $y_w$ be the image of $y$ in $H^1(G_{K_w}, M)^{D_v}$. Since $x\in N$, we see, by definition of $N$, that $\tg_w (y_w)$ is contained in the kernel of $\inf_v^2: H^2(D_v, M) \to H^2(\widehat{D_v}, M)$ for $v \in \Omega(\Q)$. For each $v\in \Omega(\Q)$, consider the following diagram
\begin{equation}
\label{eLocalLifting}
\begin{tikzcd}
H^1(G_{\Q_v}, M) \arrow{r} \arrow[two heads]{d} & H^1(G_{K_w}, M)^{D_v} \arrow["\tg_w"]{r} \arrow[two heads]{d} & H^2(D_v, M) \arrow{d} \\
H^1(\calT_v, M)^{G_{\Q_v}} \arrow{r} & H^1(\calT_w, M)^{G_{\Q_v}} \arrow{r} & H^2(I_v, M)
\end{tikzcd}
\end{equation}
in which the vertical maps are restriction maps. There is a natural subgroup $S$ of $\widehat{D_v}$ which is mapped isomorphically to $I_v$ under the projection map $\widehat{D_v} \to D_v$. Therefore $\tg_w(y_w) \in \ker \inf_v^2$ implies that the image of $\tg_w(y_w)$ in $H^2(I_v, M)$ is zero. Indeed, this follows from a diagram chase in the diagram
\[
\begin{tikzcd}
H^2(D_v, M) \arrow["\inf_v^2"]{r} \arrow["\res"]{d} & H^2(\widehat{D_v}, M) \arrow["\res"]{d} \\
H^2(I_v, M) \arrow["\cong"]{r} & H^2(S, M)
\end{tikzcd}
\]
induced by the inclusions $I_v \subseteq D_v$ and $S \subseteq \widehat{D_v}$.

Let $z_w$ be the image of $y_w$ in $H^1(\calT_w, M)^{G_{\Q_v}}$ under the middle vertical map in the diagram \eqref{eLocalLifting}. As we just proved that the image of $y_w$ in $H^2(I_v, M)$ is zero, there exists an element $z_v \in H^1(G_{\Q_v}, M)$ whose image is $z_w$. We now claim that 
\[
H^1(G_{\Q}, M) \longrightarrow \bigoplus_{v \in \Omega(\Q)} H^1(G_{\Q_v}, M)/\mathcal{L}_v
\]
is surjective. Indeed, this follows by fixing a place $v$ and then applying the Greenberg--Wiles formula twice: once to $\mathcal{L}$ and once to $\mathcal{L}'$, where $\mathcal{L}'$ has the same local conditions as $\mathcal{L}$ except at $v$ we allow the full $H^1(G_{\Q_v}, M)$. Since the dual Selmer group of $\mathcal{L}$ vanishes by assumption, so does the dual Selmer group of $\mathcal{L}'$. Then the Greenberg--Wiles formula gives
$$
|\mathrm{Sel}_{\mathcal{L}'}(G_\Q, M)| = \frac{|\mathrm{Sel}_{\mathcal{L}}(G_\Q, M)| |H^1(G_{\Q_v}, M)|}{|\mathcal{L}_v|}
$$
from which the claim readily follows.

By the claim, there exists $z \in H^1(G_{\Q}, M)$ mapping to $z_v + \mathcal{L}_v$ for every $v$. Recall that $\psi$ is defined in the diagram \eqref{eq:res-diag}. Then $y - \psi(z)$ is unramified and is mapped to $x$ under $\tg$, so we proved $x \in \im \tg'$.
\end{proof}

\subsection{\texorpdfstring{Statistical formula for $N$}{Statistical formula for N}}
The goal of this subsection is to derive a statistical formula for $N$. We start with a definition.

\begin{mydef}\label{def:constant-N}
Let $A$ be a finite abelian $p$-group and let $M$ be a finite $A$-module. Define $\mathcal{C}$ to be the set of pairs $(D, I)$, where $D$ is a $2$-generated subgroup of $A$ (i.e.~$D$ is a quotient of $\Z^2$) and $I$ is a cyclic subgroup of $D$ with $D/I$ also cyclic. We define
$$
N_{\textup{typical}}(A, M) := \ker\left(H^2(A, M) \rightarrow \bigoplus_{(D, I) \in \mathcal{C}} \frac{H^2(D, M)}{R(D, I, M)}\right).
$$
Given $\varphi \in \mathrm{Epi}(G_\Q, A)$, we also define
$$
N_\varphi(A, M) := \ker\left(H^2(A, M) \longrightarrow \bigoplus_{v \in \Omega(\Q)} \frac{H^2(D_v, M)}{R(D_v, I_v,  M)}\right).
$$
\end{mydef}

Given a homomorphism $\varphi \in \mathrm{Epi}(G_\Q, A)$, we define $\mathfrak{f}(\varphi)$ to be the product of the ramified primes in the fixed field of $\ker(\varphi)$.

\begin{lemma}
\label{lFormulaN}
Let $p$, $A$ and $M$ be as above. Then we have
$$
\lim_{X \rightarrow \infty} \frac{|\{\varphi \in \mathrm{Epi}(G_\Q, A) : \mathfrak{f}(\varphi) \leq X, N_{\textup{typical}}(A, M) \neq N_\varphi(A, M)\}|}{|\{\varphi \in \mathrm{Epi}(G_\Q, A) : \mathfrak{f}(\varphi) \leq X\}|} = 0.
$$
\end{lemma}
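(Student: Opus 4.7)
The plan is to establish the inclusions $N_{\text{typical}}(A, M) \subseteq N_\varphi(A, M)$ (for every $\varphi$) and $N_\varphi(A, M) \subseteq N_{\text{typical}}(A, M)$ (for $100\%$ of $\varphi$) separately.

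For the unconditional inclusion, the key observation is that every local pair $(D_v, I_v)$ lies in $\mathcal{C}$. For $v = \infty$ this is vacuous since $D_\infty = 1$ when $p$ is odd. For finite $v$, local class field theory gives $G_{\Q_v}^{\mathrm{ab}} \cong \widehat{\Q_v^\times}$, whose pro-$p$ part is $2$-generated with cyclic inertia and cyclic Frobenius quotient: for $v \ne p$ this is immediate from tameness (the $p$-part of tame inertia is procyclic), and for $v = p$ this follows from $\widehat{\Q_p^\times} \cong \hat{\Z} \times \mu_{p-1} \times \Z_p$. In every case $(D_v, I_v) \in \mathcal{C}$, so the map defining $N_\varphi$ factors through the map defining $N_{\text{typical}}$ via the evident projection, which yields $N_{\text{typical}} \subseteq N_\varphi$.

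For the statistical inclusion, note that equality follows as soon as every pair $(D, I) \in \mathcal{C}$ is realized as $(D_v, I_v)$ for some (tamely ramified) place $v$: then any class $c$ outside $N_{\text{typical}}$ has nonzero image in some $(D, I)$-summand, which by the realization coincides with a $v$-summand in the $N_\varphi$ target, so $c \notin N_\varphi$. Since $\mathcal{C}$ is finite, a union bound reduces the claim to showing, for each fixed $(D, I) \in \mathcal{C}$, that the density of $\varphi \in \Epi(G_\Q, A)$ with $\mathfrak{f}(\varphi) \leq X$ which miss $(D, I)$ tends to zero. I would prove this by a Dirichlet-series argument: parametrizing $\varphi$ via class field theory (or the Wright--Malle formalism), the counting Dirichlet series $\sum_\varphi \mathfrak{f}(\varphi)^{-s}$ admits an Euler-product description up to a finite global correction, and the local Euler factor at every prime $\ell \equiv 1 \pmod{|I|}$ (a positive-density set) contains a positive contribution from those local characters giving $(D_\ell, I_\ell) = (D, I)$. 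Removing this contribution strictly lowers either the order of pole or the residue of the Dirichlet series at $s = 1$, and a Selberg--Delange or Perron-type Tauberian argument then produces the desired density zero.

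The main technical obstacle is making the Euler factorization and the Tauberian comparison precise in the presence of the global compatibility constraint from class field theory on the collection of local characters; this is standard from the Wright--Malle literature but still requires some care. A minor subtlety is the separate treatment of $v = p$ and of finitely many small or exceptional primes, but these affect only boundedly many Euler factors and hence do not influence the final asymptotic.
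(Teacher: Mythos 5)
Your unconditional inclusion $N_{\text{typical}}(A, M) \subseteq N_\varphi(A, M)$ and the reduction to showing that, for $100\%$ of $\varphi$, each $(D, I) \in \mathcal{C}$ is realized as $(D_v, I_v)$ for some $v$, are both correct and match the paper's proof exactly. The gap is in the proposed Dirichlet-series mechanism for the density-zero statement, and it is not a minor technicality.

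The constraint $(D_v, I_v) = (D, I)$ is \emph{not} a local condition on the Euler factor at $v$. In the class-field-theoretic parametrization (Kronecker--Weber), a homomorphism $\varphi$ corresponds to a tuple of local characters $\varphi_p \in \mathrm{Hom}(\Z_p^{*}, A)$, and the conductor Dirichlet series does admit an Euler product in this data. But while $\varphi_v$ determines the inertia $I_v = \mathrm{im}(\varphi_v)$, the Frobenius $\varphi(\Frob_v)$ is computed from the restrictions $\varphi_p$ at \emph{all} primes $p \neq v$, because the principal idele $v \in \Q^{*}$ identifies the uniformizer component at $v$ with a unit at every other place. So the phrase ``local characters giving $(D_\ell, I_\ell) = (D, I)$'' conflates the Euler-product variable $\varphi_\ell$ with the decomposition-group restriction $\varphi|_{G_{\Q_\ell}}$: the former does not determine $D_\ell$. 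Consequently, the event ``no $v$ realizes $(D, I)$'' couples all the local factors and does not factor prime by prime, so the Euler product you wish to modify does not exist for this constraint and Selberg--Delange is not available. The ``global compatibility constraint'' you acknowledge is not a bounded correction to finitely many Euler factors; it is exactly this non-locality of Frobenius, and it affects every ramified prime. The paper overcomes it either by citing Wood's equidistribution theorem for local behaviors together with a Borel--Cantelli style argument (\cite{Wood} and \cite[Theorem~A.1]{Liu}), or self-containedly via the first-moment character-sum computation in Theorem~\ref{twabBig}, which uses the large sieve (Corollary~\ref{cLargeSieve}) precisely to control the Frobenius cross-terms. To repair your route you would effectively need to reprove that moment estimate.
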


\begin{proof}
By the structure of the tame local Galois group, we have $(D_v, I_v) \in \mathcal{C}$ for all $v \neq p$. Moreover, using that $p$ is odd, we also have that $(D_p, I_p) \in \mathcal{C}$. Therefore we always have an inclusion $N_{\text{typical}}(A, M) \subseteq N_\varphi(A, M)$. Since $\mathcal{C}$ is a finite set, it suffices to establish that
$$
\lim_{X \rightarrow \infty} \frac{|\{\varphi \in \mathrm{Epi}(G_\Q, A) : \mathfrak{f}(\varphi) \leq X, \exists v \in \Omega(\Q) \text{ with } D_v = D, I_v = I\}|}{|\{\varphi \in \mathrm{Epi}(G_\Q, A) : \mathfrak{f}(\varphi) \leq X\}|} = 1
$$
for every $(D, I) \in \mathcal{C}$. This is a consequence of Wood's result \cite{Wood} combined with the argument in \cite[Theorem A.1]{Liu}. Alternatively, we prove a much stronger result later in this paper, see Theorem \ref{twabBig}.
\end{proof}

\subsection{\texorpdfstring{Computing $N_{\text{typical}}(A, M)$ in special cases}{Computing Ntypical(A, M) in special cases}}
In general, the authors are unaware of a simple formula for $N_{\text{typical}}(A, M)$. In order to give explicit formulas for the leading constant in Corollaries \ref{cor:multicyclic} and \ref{cor:cyclic}, we will compute $N_{\text{typical}}(A, M)$ in several special cases of interest.

	\begin{lemma}\label{lem:N-in-inf}
		Let $A$ be a finite abelian group. Let $H$ be a finite abelian group and let $C$ be a cyclic group such that $A \cong H \times C$. Let $M$ be an $\Z_p[A]$-module such that $H$ acts trivially on $M$. Assume that $M^C \cong \FF_p$. Then $N_{\textup{typical}}(A, M)$ is contained in the image of the injection $H^2(H, M^C) \to H^2(A, M)$.
	\end{lemma}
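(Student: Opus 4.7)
The plan is to apply the Lyndon--Hochschild--Serre spectral sequence (LHS) for the split extension $1\to C\to A\to H\to 1$ to produce a filtration $F^{2}\subseteq F^{1}\subseteq F^{0}=H^{2}(A,M)$, and to force $N_{\text{typical}}$ into the bottom piece $F^{2}=\mathrm{image}(H^{2}(H,M^{C})\to H^{2}(A,M))$ by using two well-chosen families of pairs in $\mathcal{C}$. Since $H$ acts trivially on $M$ and the extension splits, the edge map identifies $F^{0}/F^{1}$ with $H^{2}(C,M)$ via $\res^{A}_{C}$ (so $F^{1}=\ker\res^{A}_{C}$), while $F^{1}/F^{2}$ injects into $E_{2}^{1,1}=H^{1}(H,H^{1}(C,M))=\Hom(H,H^{1}(C,M))$. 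The same filtration is available on $H^{2}(D,M)$ whenever $D=\langle h\rangle\times C$, and by naturality $\res^{A}_{D}$ respects filtrations and induces the evident restriction maps on graded pieces.

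First, I would take $(D,I)=(C,C)\in\mathcal{C}$. Here $\widehat{C}=\hat{\Z}\times C$, and an LHS computation (using $H^{2}(\hat{\Z},\cdot)=0$) gives $H^{2}(\widehat{C},M)\cong H^{2}(C,M)\oplus H^{1}(C,M)$ with inflation from $H^{2}(C,M)$ being the inclusion into the first summand. Hence $R(C,C,M)=0$, and the $(C,C)$-condition forces $\res^{A}_{C}(x)=0$ for every $x\in N_{\text{typical}}$, i.e.\ $x\in F^{1}$.

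Next, for each $h\in H$ I would set $D_{h}:=\langle h\rangle\times C$ and $I=C$, so that $(D_{h},C)\in\mathcal{C}$. A direct identification gives $\widehat{D_{h}}\cong\hat{\Z}\times C$ under which the projection $\widehat{D_{h}}\to D_{h}\to\langle h\rangle$ factors through $\hat{\Z}$. Tracking the inflation $H^{2}(D_{h},M)\to H^{2}(\widehat{D_{h}},M)$ summand-by-summand via naturality of LHS: the top piece $F^{2}_{D_{h}}=H^{2}(\langle h\rangle,M^{C})$ is annihilated (since $H^{2}(\hat{\Z},M^{C})=0$); the induced map on $F^{1}_{D_{h}}/F^{2}_{D_{h}}$ factors as the inclusion $H^{1}(C,M)[|h|]\hookrightarrow H^{1}(C,M)$ into the relevant summand; and the induced map on $F^{0}_{D_{h}}/F^{1}_{D_{h}}=H^{2}(C,M)$ is the identity. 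These two injectivities give $R(D_{h},C,M)=F^{2}_{D_{h}}$.

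Combining the two steps, for $x\in N_{\text{typical}}\cap F^{1}$ the $(D_{h},C)$-condition forces $\res^{A}_{D_{h}}(x)\in F^{2}_{D_{h}}$, so the class of $x$ in $F^{1}/F^{2}\subseteq\Hom(H,H^{1}(C,M))$ restricts to zero in $\Hom(\langle h\rangle,H^{1}(C,M))$ for every $h\in H$. Hence the corresponding homomorphism $H\to H^{1}(C,M)$ vanishes on every cyclic subgroup and is therefore trivial, giving $x\in F^{2}$ as required. The main obstacle I anticipate is the bookkeeping in the third step: verifying summand-by-summand that $R(D_{h},C,M)$ is exactly the top filtration piece. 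The crucial structural observation is that $\widehat{D_{h}}$ surjects onto $\langle h\rangle$ through the cohomological-dimension-one group $\hat{\Z}$, which precisely annihilates $H^{2}(\langle h\rangle,M^{C})$ while preserving the lower graded pieces.
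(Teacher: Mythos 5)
Your proof is correct and is essentially the same argument as the paper's, packaged through the filtration of the Lyndon--Hochschild--Serre spectral sequence rather than through the Jannsen direct-sum decomposition $H^2(A,M)\cong H^2(H,M^C)\oplus H^1(H,H^1(C,M))\oplus H^2(C,M)$ that the paper uses. Both proofs kill the two outer summands/graded pieces by testing $N_{\text{typical}}$ against pairs of the form $(\text{cyclic})\times C$ with $I=C$, exploiting $H^2(\hat\Z,\cdot)=0$. The one genuine difference is how the middle piece is handled: given a nonzero $\alpha\in H^1(H,H^1(C,M))$, the paper first observes $H^1(C,M)\in\{0,\FF_p\}$ (using $M^C\cong\FF_p$) and then chooses a cyclic complement $C_1$ with $H=H_1\times C_1$ and $H_1\subseteq\ker\alpha$, while you instead run over \emph{all} cyclic $\langle h\rangle\le H$ and use that a homomorphism on a finite abelian group vanishing on every cyclic subgroup is zero. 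Your variant is slightly more economical in that it avoids the choice of complement, does not actually invoke $M^C\cong\FF_p$ (the filtration argument works for any finite $\Z_p[A]$-module with trivial $H$-action), and needs only the inclusion $E_\infty^{1,1}\hookrightarrow E_2^{1,1}$ rather than full degeneration of the spectral sequence. One small omission: the lemma also asserts (and the paper's decomposition delivers for free) that the edge map $H^2(H,M^C)\to H^2(A,M)$ is injective; your argument establishes only that $N_{\text{typical}}\subseteq F^2$, which is the containment actually being claimed, but it does not re-derive the injectivity.
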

	
	\begin{proof}
		By the Hochschild--Serre spectral sequence \cite[Theorem~2.4.6]{NSW}, there is a decomposition
		\begin{equation}\label{eq:H-S}
			H^2(H \times C, M) \cong H^2(H, M^C) \oplus H^1(H, H^1(C, M)) \oplus H^2(C, M).
		\end{equation}
		Moreover, via this isomorphism, the first factor $H^2(H, M^C)$ as a subgroup of $H^2(H \times C, M)$ is the image of the edge homomorphism $E_2^{2,0} \to E^2$ which is the inflation map $H^2(H, M^C) \to H^2(A, M)$. Since $C$ is cyclic, $|H^1(C, M)| = |\widehat{H}^0(C,M)|=|M^C/\textup{Nm}_C M|$, where $\hat{H}^0$ is the Tate cohomology. Because $M^C$ is $\FF_p$, $H^1(C, M)$ is either $\FF_p$ or trivial. 
		Let $\alpha \in H^1(H, H^1(C, M))$. If $M^C$ is trivial then $\alpha$ is zero; otherwise, since $H^1(H, \FF_p)=\Hom(H, \FF_p)$, $\alpha$ is a homomorphism $H \to \FF_p$, and we can decompose $H$ as $H_1 \times C_1$ such that $C_1$ is cyclic and $H_1 \subseteq \ker \alpha$ (this decomposition is not unique). 
		
		Consider the pair $(D,I)$ such that $D=C_1 \times C$ and $I=C$. Then $\widehat{D}= \hat{\Z} \times C$.
		The decomposition \eqref{eq:H-S} is functorial in $H$ for a fixed $C$-module $M$ \cite{Jannsen}, so it commutes with the restriction map $H^2(A,M) \to H^2(D, M)$ and the inflation map $H^2(D, M) \to H^2(\widehat{D}, M)$ for the groups $D$ and $\widehat{D}$ defined above. Then comparing \eqref{eq:H-S} with the decomposition of $H^2(\widehat{D}, M)$
		\[
			H^2(\widehat{D}, M) \cong H^2(\hat{\Z}, M^C) \oplus H^1(\hat{\Z}, H^1(C, M)) \oplus H^2(C, M)
		\]
		and using that $H^2(\hat{\Z}, M^C)=0$, we see that the kernel of $H^2(A, M) \overset{\res}{\longrightarrow} H^2(D, M) \overset{\inf}{\longrightarrow} H^2( \widehat{D}, M)$ is contained in $H^2(H, M^C) \oplus \ker (H^1(H, H^1(C, M)) \to H^1(\hat{\Z}, H^1(C, M)) )$. By our construction of $D$ and $I$, we see that the image of $\alpha$ in 
		$$
		H^1(\hat{\Z}, H^1(C, M)) = H^1(\hat{\Z}, \FF_p)
		$$ 
		is the homomorphism $\hat{\Z} \to C_1 \overset{\alpha|_{C_1}}{\longrightarrow} \FF_p$, which is nonzero if $\alpha \neq 0$.
		
		The above argument shows that any nonzero element of $H^1(H, H^1(C, M)) \times H^2(C,M)$ is not contained in the kernel of $H^2(A, M) \to H^2(\widehat{D},M)$ for some choice of $(D, I)$. Then the lemma follows by definition of $N_{\textup{typical}}(A, M)$.
	\end{proof}

	\begin{lemma}\label{lem:N-Fp}
		For an abelian $p$-group $H$, we have $N_{\textup{typical}}(H, \FF_p)=0$. 
	\end{lemma}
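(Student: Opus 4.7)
The plan is to proceed by induction on the number of cyclic summands in a primary decomposition of $H$, with the inductive step powered by Lemma \ref{lem:N-in-inf} applied to the trivial module.

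For the base case, I would dispose of $H$ trivial immediately (since $H^2(1, \mathbb{F}_p) = 0$) and then treat $H$ cyclic by exhibiting a single pair $(D, I) \in \mathcal{C}$ for which the map $H^2(H, \mathbb{F}_p) \to H^2(D, \mathbb{F}_p)/R(D, I, \mathbb{F}_p)$ is already injective. The natural choice is $(D, I) = (H, H)$, which qualifies because $H$ is cyclic (hence $2$-generated), and both $I = H$ and $D/I = 1$ are cyclic. For this pair, $\widehat{D} = \hat{\mathbb{Z}} \times H$ by Definition~\ref{def:R}, and the obvious splitting $D \hookrightarrow \widehat{D}$ forces the inflation map $H^2(D, \mathbb{F}_p) \to H^2(\widehat{D}, \mathbb{F}_p)$ to be split injective. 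Hence $R(H, H, \mathbb{F}_p) = 0$, so $N_{\textup{typical}}(H, \mathbb{F}_p)$ lies in the kernel of the identity and is zero.

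For the inductive step with $H$ having at least two cyclic summands, I would decompose $H \cong H_0 \times C$ with $C$ cyclic. Since $\mathbb{F}_p$ carries the trivial action, $\mathbb{F}_p^C = \mathbb{F}_p$, and Lemma \ref{lem:N-in-inf} applies to yield that every class $\alpha \in N_{\textup{typical}}(H, \mathbb{F}_p)$ is the inflation, along the projection $\pi : H \to H_0$, of a unique class $\beta \in H^2(H_0, \mathbb{F}_p)$. The key observation is that every pair $(D, I) \in \mathcal{C}$ associated to $H_0$ also lies in $\mathcal{C}$ for $H$ via the inclusion $H_0 \hookrightarrow H$, and for such $D \subseteq H_0$ the restriction of $\alpha$ to $D$ agrees with the restriction of $\beta$ to $D$ (since $\pi$ restricts to the identity on $D$). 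Therefore $\beta|_D$ lies in $R(D, I, \mathbb{F}_p)$ for every $(D, I) \in \mathcal{C}$ associated to $H_0$, so $\beta \in N_{\textup{typical}}(H_0, \mathbb{F}_p)$, which vanishes by the induction hypothesis. Hence $\alpha = \pi^{\ast}(\beta) = 0$.

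I anticipate no serious obstacle: the only bookkeeping points are (i) verifying that $(H, H)$ genuinely belongs to $\mathcal{C}$ in the cyclic base case, and (ii) confirming that inflation along $\pi$ followed by restriction to $D \subseteq H_0$ coincides with direct restriction from $H^2(H_0, \mathbb{F}_p)$ to $H^2(D, \mathbb{F}_p)$, both of which are immediate from functoriality.
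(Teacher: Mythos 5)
Your proof is correct, but it takes a genuinely different route from the paper's. The paper argues directly: given any nonzero $\alpha \in H^2(H, \FF_p)$, it considers the corresponding central extension $1 \to \FF_p \to \widetilde{H} \to H \to 1$ and splits into two cases. If $\widetilde{H}$ is abelian, non-splitness of the extension produces a cyclic $D \leq H$ whose preimage is cyclic, and the pair $(D, D)$ witnesses $\alpha \notin N_{\textup{typical}}(H, \FF_p)$. If $\widetilde{H}$ is nonabelian, one takes two elements with nontrivial commutator and uses their images to build a $2$-generated $D$ and a cyclic $I$ that again detect $\alpha$. In contrast, you argue by induction on the number of cyclic summands: the base case is handled by the observation that $R(H, H, \FF_p) = 0$ when $H$ is cyclic (since $\widehat{D} = \hat{\Z} \times H$ retracts onto $H$), and the inductive step leans on Lemma~\ref{lem:N-in-inf} plus the compatibility $\res^H_D \circ \pi^\ast = \res^{H_0}_D$ for $D \subseteq H_0$, together with the observation that $\mathcal{C}(H_0) \subseteq \mathcal{C}(H)$. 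Both approaches are sound; note that Lemma~\ref{lem:N-in-inf} is already proved before this lemma in the paper, so your dependency chain is fine. Your argument is arguably cleaner once Lemma~\ref{lem:N-in-inf} is available, whereas the paper's extension-theoretic argument is more elementary and self-contained, and it explicitly produces the witness pair $(D, I)$ detecting any given nonzero class — information that is invisible in the inductive framing.
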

	
	\begin{proof}
		We will show that for any nonzero element $\alpha \in H^2(H, \FF_p)$ there exists a pair $(D,I)$ of subgroups of $H$ such that both $I$ and $D/I$ are cyclic and $\alpha$ is not in the kernel of $H^2(H, \FF_p) \overset{\res}{\longrightarrow} H^2(D, \FF_p) \overset{\inf}{\longrightarrow} H^2 (\widehat{D}, \FF_p)$. Let $1 \to \FF_p \to \widetilde{H} \overset{\pi_{\alpha}}{\longrightarrow} H \to 1$ be the extension of $H$ corresponding to $\alpha$.
		
		Suppose $\widetilde{H}$ is abelian. Then since $\alpha\neq 0$, the group extension $\pi_{\alpha}:\widetilde{H} \to H$ does not split. So there exists a nontrivial cyclic subgroup $D$ of $H$ such that $\pi_{\alpha}^{-1}(D)$ is a cyclic group $\widetilde{D}$. We take $I=D$, then $\widehat{D}=D \times \hat{\Z}$. After taking $\res: H^2(H, \FF_p) \to H^2(D, \FF_p)$ and $\inf: H^2(D, \FF_p) \to H^2(\widehat{D}, \FF_p)$, $\alpha$ is mapped to the class that corresponds to the group extension $1 \to \FF_p \to \widetilde{D} \times \hat{\Z} \to \widehat{D} \to 1$, which does not split because $1\to \FF_p \to \widetilde{D} \to D\to 1$ does not split. So $\alpha$ is not in $\ker( \inf\circ \res)$, and hence $\alpha \not \in N_{\textup{typical}}(H, \FF_p)$.
		
		Suppose $\widetilde{H}$ is nonabelian. Then there exists two elements $\tilde{x}, \tilde{y} \in \widetilde{H}$ such that $[\tilde{x}, \tilde{y}]$ generates $\ker \pi_{\alpha}$. Let $x:=\pi_{\alpha}(\tilde{x})$ and $y:=\pi_{\alpha}(\tilde{y})$, and let $D:= \langle x, y\rangle$ and $I:=\langle x \rangle$. Then $\widetilde{D}:= \pi_{\alpha}^{-1}(D)$ is a nonabelian $p$-group whose abelianization is $D$.
		One can check that $\inf \circ \res(\alpha)$ corresponds to the group extension $1 \to \FF_p \to \widetilde{D} \times_{D/I} \hat{\Z} \to \widehat{D} \to 1$, and this group extension is nonsplit since $\widehat{D}$ is the abelianization of $\widetilde{D} \times_{D/I} \hat{\Z}$. So in this case, $\alpha$ is also not in $\ker( \inf\circ \res)$, and hence $\alpha \not \in N_{\textup{typical}}(H, \FF_p)$.
	\end{proof}
	
	\begin{lemma}\label{lem:N-M-exp=p}
		Let $A$ be an elementary abelian $p$-group, and $e$ a nontrivial primitive idempotent of $\Q_p[A]$. Then $N_{\textup{typical}}(A, e\Z_p[A]/I)=0$ for any proper ideal $I$ of $e\Z_p[A]$ containing $I_e$.
	\end{lemma}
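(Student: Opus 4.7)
The strategy is to combine Lemma~\ref{lem:N-in-inf} with Lemma~\ref{lem:N-Fp}, exploiting that in the elementary abelian setting the module $M := e\Z_p[A]/I$ is killed by $p$ and that a codimension-one subgroup of $A$ acts trivially on it. First I would set up the structural decomposition: since $A$ is elementary abelian and $e$ is nontrivial, the $A$-action on $e\Z_p[A] \cong \Z_p[\zeta_p]$ factors through a quotient $A/H \cong \Z/p\Z$ where $H$ is a hyperplane, and I fix a complement $C$ so that $A = H \times C$. Under these identifications $\mathfrak{m}_e = (1-\zeta_p)$ and $I_e = p\cdot e\Z_p[A] = \mathfrak{m}_e^{p-1}$, so any proper $I \supseteq I_e$ equals $\mathfrak{m}_e^k$ for some $1 \le k \le p-1$, and $M \cong \Z_p[\zeta_p]/\mathfrak{m}_e^k$ is an $\FF_p$-vector space of dimension $k$. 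A direct calculation in the local ring gives $M^C = (1-\zeta_p)^{k-1}M \cong \FF_p$ with trivial $A$-action, so Lemma~\ref{lem:N-in-inf} applies and yields the inclusion
\[
N_{\textup{typical}}(A, M) \subseteq \mathrm{Im}\bigl(H^2(H, \FF_p) \longrightarrow H^2(A, M)\bigr).
\]

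The case $n=1$ is then immediate since $H$ is trivial. For $n \ge 2$, I take an arbitrary nonzero $\alpha \in H^2(H, \FF_p)$ and aim to produce a pair $(D, J) \in \mathcal{C}$ witnessing that the image of $\alpha$ in $H^2(A, M)$ does not lie in $N_{\textup{typical}}(A, M)$. Lemma~\ref{lem:N-Fp} applied to the abelian $p$-group $H$ produces such a pair with $D \subseteq H$, cyclic $J$, cyclic $D/J$, and with $\bar\alpha := \inf \circ \res(\alpha)$ nonzero in $H^2(\widehat{D}, \FF_p)$. Viewing $(D, J)$ as a pair for $A$ via $H \subseteq A$, naturality reduces the task to checking that the image of $\bar\alpha$ under the canonical map $H^2(\widehat{D}, \FF_p) \to H^2(\widehat{D}, M)$ remains nonzero.

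This final injectivity is the only substantive point in the plan. Since $D \subseteq H$ and $H$ acts trivially on $M$, the profinite group $\widehat{D}$ acts trivially on $M$; combined with $pM = 0$, this means the short exact sequence $0 \to \FF_p \to M \to M/\FF_p \to 0$ of trivial $\widehat{D}$-modules splits, since any $\FF_p$-linear splitting is automatically $\widehat{D}$-equivariant. Consequently the induced map $H^2(\widehat{D}, \FF_p) \to H^2(\widehat{D}, M)$ is the inclusion of a direct summand, sending $\bar\alpha$ to a nonzero class, which gives the required contradiction. The whole argument works precisely because $M$ is annihilated by $p$ (i.e.\ $I \supseteq I_e$) and because $\widehat{D}$ acts trivially on $M$ (i.e.\ $D \subseteq H$); both conditions are automatic in the elementary abelian case, which explains why this short direct argument would not extend to general abelian $p$-groups.
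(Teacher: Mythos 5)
Your proof is correct and follows essentially the same route as the paper: decompose $A = H \times C$ via the cyclic quotient through which $e\Z_p[A]$ factors, apply Lemma~\ref{lem:N-in-inf} to land inside $\mathrm{Im}(H^2(H,\FF_p)\to H^2(A,M))$, and then detect any nonzero class via Lemma~\ref{lem:N-Fp} applied to $H$. The only place you diverge is in establishing injectivity of $H^2(\widehat D,\FF_p)\to H^2(\widehat D,M)$: the paper runs the long exact sequence for $0\to\FF_p\to M\to M/\FF_p\to 0$ and notes that $\Hom(\widehat D,M)\to\Hom(\widehat D,M/\FF_p)$ is surjective, whereas you observe that the sequence consists of trivial $\widehat D$-modules of exponent $p$ and hence splits $\widehat D$-equivariantly; both arguments are valid and yours is marginally slicker. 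One small inaccuracy worth noting: your claim that $I_e = p\cdot e\Z_p[A] = \mathfrak{m}_e^{p-1}$ holds for $n\ge 2$, but when $n=1$ one has $I_e = \mathfrak{m}_e$ (and $\mathfrak{m}_e^{p-1}\subsetneq\mathfrak{m}_e$ since $p>2$); this doesn't damage the argument since you treat $n=1$ separately via $H=1$, but the global statement as written is slightly off. It would also be cleanest to state explicitly, as the paper does, that $p(e\Z_p[A])\subseteq I_e$ for all $n\ge 1$ (not just that $I_e=p\cdot e\Z_p[A]$), since that is what is actually needed to conclude $pM=0$.
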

	
	\begin{proof}
		Write $M:=e\Z_p[A]/I$. By \cite[Lemma~2.5]{Liu}, the action of $A$ on $e\Z_p[A]$ factors through a cyclic quotient of $A$. So we can decompose $A$ as $A \cong H \times C$ such that $C \cong \Z/p\Z$ and $H$ acts trivially on $e\Z_p[A]$. 
		Since $A$ is an elementary abelian $p$-group, it follows from the definition of $I_e$ (see \eqref{eq:def-Ie}) that $p(e\Z_p[A])\subseteq I_e$, so $M$ has exponent $p$. Also, $M^{A} \cong \FF_p$ as $e$ is not the trivial idempotent. Then by Lemma~\ref{lem:N-in-inf}, $N_{\textup{typical}}(A,M)$ is contained in the image of the inflation map $H^2(H, \FF_p) \to H^2(A, M)$.
		
		For a pair $(D, I)$ of subgroups of $H$ such that both $I$ and $D/I$ are cyclic, consider the following commutative diagram 
		\[\begin{tikzcd}
			H^2(H, \FF_p) \arrow["\inf", hook]{r} \arrow["\res"]{d} & H^2(A, M) \arrow["\res"]{d}  \arrow["\res"]{dr}& \\
			H^2(D, \FF_p) \arrow["\inf", hook]{r} \arrow["\inf"]{d} & H^2(D \times C, M) \arrow["\res"]{r} \arrow["\inf"]{d} & H^2(D, M) \arrow["\inf"]{d} \\
			H^2(\widehat{D}, \FF_p) \arrow["\inf", hook]{r} & H^2(\widehat{D} \times C, M) \arrow["\res"]{r} & H^2(\widehat{D}, M).
		\end{tikzcd}\]
		The horizontal inflation maps are injective by the decomposition \eqref{eq:H-S}, and the top-left and bottom-right squares commute by \cite[Proposition~1.5.5]{NSW}. One can check on the cochain level that the composition of $\inf$ and $\res$ in the bottom row is the map $H^2(\widehat{D}, \FF_p) \to H^2(\widehat{D}, M)$ induced by the injection $\FF_p \hookrightarrow M$. Since $M$ has exponent $p$ and $\widehat{D}$ acts trivially on $M$, the map $H^1(\widehat{D}, M) \to H^1(\widehat{D}, M/\FF_p)$, which is $\Hom(\widehat{D}, M) \to \Hom(\widehat{D}, M/\FF_p)$, is surjective. So by the long exact sequence associated to $0 \to \FF_p \to M \to M/\FF_p \to 0$, we have that $H^2(\widehat{D}, \FF_p) \to H^2(\widehat{D}, M)$ is injective. Then through any path in the above diagram, the kernel of $H^2(H, \FF_p) \to H^2(\widehat{D}, M)$ equals the kernel of $H^2(H, \FF_p) \to H^2(\widehat{D}, \FF_p)$. 
		
		Finally, considering all pairs $(D,I)$ of subgroups of $H$ such that both $I$ and $D/I$ are cyclic, the lemma immediately follows from Lemma~\ref{lem:N-in-inf} and Lemma~\ref{lem:N-Fp}.
	\end{proof}

\section{Character sums}
\label{sChar}
\subsection{Abstract setup}
It will be convenient to normalize our choices of tame inertia and Frobenius as follows. Let $p$ be an odd prime and fix a primitive $p$-th root of unity $\zeta \in \overline{\Q}$. Let $q \equiv 1 \bmod p$. 

We demand that our choice of $\Frob_q$ projects trivially to $\Gal(\Q_q(\zeta_q)/\Q_q)$. It will be useful later to rephrase this property, so we do this now. We remark that $\Q_q(\zeta_q) = \Q_q(\sqrt[q - 1]{-q})$. Note that both fields are abelian extensions of $\Q_q$ of the same degree. Moreover, $q$ is a norm from both fields, and hence the desired equality follows from local class field theory. Hence $\Frob_q$ projecting trivially to $\Gal(\Q_q(\zeta_q)/\Q_q)$ is equivalent to 
$$
\frac{\Frob_q(\sqrt[q - 1]{-q})}{\sqrt[q - 1]{-q}} = 1.
$$
In this case, we get the commutative diagram
\begin{equation}
\label{eChiq}
\begin{tikzcd}
\Q_q^\ast/\Q_q^{\ast p} \arrow[r] \arrow[d] & \Z_q^\ast/\Z_q^{\ast p} \arrow[d] \\
H^1(G_{\Q_q}, \langle \iota_q(\zeta) \rangle) \arrow[r] & \langle \zeta \rangle
\end{tikzcd}
\end{equation}
where the right vertical map sends $u$ to the local Hilbert symbol $\iota_q^{-1}((q, u)_p)$ and where the bottom horizontal map is evaluation at $\Frob_q$ followed by $\iota_q^{-1}$. Moreover, the top horizontal map comes from choosing $-q$ as a uniformizer, so it sends an element $(-q)^\alpha u^\beta \in \Q_q^\ast/\Q_q^{\ast p}$ with $\alpha, \beta \in \{0, \dots, p - 1\}$ and $v_q(u) = 0$ to $u^\beta$. Using our normalization of $\Frob_q$ and the definition of the power residue symbol in terms of the local Artin symbol, one readily verifies that the diagram commutes. 

Writing $f$ for the map $\Z_q^\ast/\Z_q^{\ast p} \rightarrow \langle \zeta \rangle$, we get an induced map on $\{a \in \Z : \gcd(a, q) = 1\}$ by the formula $f(\iota_q(a))$. For now, let us record the observation that this is a Dirichlet character that we shall henceforth call $\chi_q$. In particular, we may view $\chi_q$ as a map from $G_\Q$ to $\mathbb{C}^\ast$.

As for the generator of tame inertia $\sigma_q$, we normalize it so that
$$
\chi_q(\iota_q^\ast(\sigma_q)) = \zeta.
$$
With these choices, there is some $s(q) \in (\Z/p\Z)^\ast$ such that
\begin{equation}
\label{eInertiaChoice}
\begin{tikzcd}
\Q_q^\ast/\Q_q^{\ast p} \arrow[r, "s(q) \cdot v_q"] \arrow[d] & \Z/p\Z \arrow[d] \\
H^1(G_{\Q_q}, \langle \iota_q(\zeta) \rangle) \arrow[r] & \langle \zeta \rangle
\end{tikzcd}
\end{equation}
where the left vertical map is the Kummer isomorphism, where the right vertical map sends $1$ to $\zeta$ and where the bottom horizontal map is evaluation at $\sigma_q$ (followed by $\iota_q^{-1}$).

Let $A$ be a finite, abelian $p$-group (with $p$ odd). Let $\mathcal{F} = \mathrm{Epi}(G_\Q, A)$. Recall that, given a homomorphism $\varphi \in \mathrm{Epi}(G_\Q, A)$, we defined $\mathfrak{f}(\varphi)$ to be the product of the ramified primes in the fixed field of $\ker(\varphi)$. We denote by $\mathcal{F}(X)$ the subset of $\varphi \in \mathcal{F}$ satisfying $\mathfrak{f}(\varphi) \leq X$.

A tuple $\mathcal{L} = (\mathcal{L}_v)_{v \in \Omega_\Q}$ is called \emph{a tuple of local Selmer conditions} if $\mathcal{L}_v$ is a subgroup of $H^1(G_{\Q_v}, \mu_p)$ such that $\mathcal{L}_v = H^1_{\text{nr}}(G_\Q, \mu_p)$ for all but finitely many $v$. We define $U$ to be the set of all tuples of local Selmer conditions. For a prime $v \equiv 1 \bmod p$, we let $\mathrm{Ev}_v: H^1(G_{\Q_v}, \mu_p) \rightarrow \mu_p \times \mu_p$ be the map given by evaluation at $\sigma_v$ and $\Frob_v$.

\begin{mydef}
\label{dLocal}
Let $C \subseteq A$ be a cyclic subgroup. An assignment of local conditions, special with respect to $C$, is a map $g: \mathcal{F} \rightarrow U$ with the following properties
\begin{itemize}
\item the tuple $g(\varphi) = (\mathcal{L}_v)_{v \in \Omega_\Q}$ satisfies $\mathcal{L}_v = H^1_{\textup{nr}}(G_{\Q_v}, \mu_p)$ for all finite places $v$ not dividing $p \mathfrak{f}(\varphi)$;
\item for $v \equiv 1 \bmod p$, the subgroup $\mathrm{Ev}_v(\mathcal{L}_v)$ depends only on $\varphi(\iota_v^\ast(\sigma_v))$ and $\varphi(\iota_v^\ast(\Frob_v))$;
\item we have $|\mathcal{L}_v| \in \{1, p\}$ for all finite places $v \neq p$;
\item we have $|\mathcal{L}_v| = 1$ for all finite places $v \neq p$ with $\varphi(G_{\Q_v}) = \varphi(\mathcal{T}_v) = C$;
\item we have $\mathcal{L}_p = H^1(G_{\Q_p}, \mu_p)$.
\end{itemize}
\end{mydef}

Our main analytic theorem is the following result. We note that the fifth (and last) condition in Definition \ref{dLocal} is for convenience only. In fact, we can formally deduce from our next theorem that the same result still holds without this extra condition.

\begin{theorem}
\label{tMainAnalytic}
Let $g: \mathcal{F} \rightarrow U$ be an assignment of local conditions, special with respect to a nontrivial subgroup $C \subseteq A$. Then we have
$$
\lim_{X \rightarrow \infty} \frac{\# \{\varphi \in \mathcal{F}(X) : \mathrm{Sel}(g(\varphi)) = 0\}}{\# \mathcal{F}(X)} = 1.
$$
\end{theorem}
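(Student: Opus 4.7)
The approach is a first moment computation. Since $0$ always lies in $\mathrm{Sel}(g(\varphi))$, one has $|\mathrm{Sel}(g(\varphi))| \geq 1$, so it suffices to establish
\[
M(X) := \sum_{\varphi \in \mathcal{F}(X)} |\mathrm{Sel}(g(\varphi))| = \#\mathcal{F}(X)\bigl(1 + o(1)\bigr);
\]
indeed, this together with $|\mathrm{Sel}(g(\varphi))| \in \{1, p, p^2, \dots\}$ forces $100\%$ of Selmer groups to vanish. Using $H^1(G_\Q, \mu_p) \cong \Q^\ast/\Q^{\ast p}$ by Kummer theory and swapping the order of summation,
\[
M(X) = \sum_{s \in \Q^\ast/\Q^{\ast p}} \#\bigl\{\varphi \in \mathcal{F}(X) : \mathrm{res}_v(s) \in \mathcal{L}_v(\varphi) \text{ for all } v\bigr\}.
\]
The trivial class $s=0$ contributes exactly $\#\mathcal{F}(X)$, so the task becomes to show that the contribution of $s \neq 0$ is $o(\#\mathcal{F}(X))$.

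\textbf{Reduction to character sums.} Fix a nontrivial $s = \pm \prod_\ell \ell^{a_\ell} \in \Q^\ast/\Q^{\ast p}$. At every prime $v \notin \mathrm{supp}(\varphi) \cup \{p\}$ the local condition is the unramified subgroup, so the constraint forces $v_\ell(s) \equiv 0 \pmod p$ for all $\ell \notin \mathrm{supp}(\varphi) \cup \{p\}$; hence the support of $s$ must lie in $\mathrm{supp}(\varphi) \cup \{p\}$. At ramified $v \neq p$ the condition $\mathrm{res}_v(s) \in \mathcal{L}_v$, unpacked via the diagrams \eqref{eChiq} and \eqref{eInertiaChoice}, becomes a $p$-th power residue identity: evaluation at $\sigma_v$ picks out $\zeta^{s(v) a_v}$ and evaluation at $\Frob_v$ picks out $\prod_{\ell \neq v} \chi_v(\ell)^{a_\ell}$, and membership in $\mathrm{Ev}_v(\mathcal{L}_v)$ is a linear condition in these data (depending only on $\varphi(\iota_v^\ast(\sigma_v))$ and $\varphi(\iota_v^\ast(\Frob_v))$ by Definition \ref{dLocal}). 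One then parametrizes $\varphi \in \mathcal{F}(X)$ by its ramified primes $v_1, \dots, v_k$ together with the local inertia/decomposition data at each, in the style of \cite{KPMalle, KR}, and rewrites $\#\{\varphi : s \in \mathrm{Sel}\}$ as a multilinear sum of products of characters $\chi_{v_i}(v_j)$ and $\chi_{v_i}(\ell)$ for $\ell \mid s$ outside the support.

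\textbf{Bounding via unlinked sets.} The crucial leverage is the fourth bullet of Definition \ref{dLocal}: whenever $\varphi(G_{\Q_v}) = \varphi(\mathcal{T}_v) = C$, the local condition is trivial, forcing $\mathrm{res}_v(s) = 0$, i.e.\ $s$ is a $p$-th power in $\Q_v^\ast$. I would argue that for typical $\varphi \in \mathcal{F}(X)$, a positive-density proportion of ramified primes have this property (because $C \neq 0$ is a fixed conjugacy datum that arises in the Wood-type Malle heuristic with positive density). This yields many independent $p$-th power residue conditions on $s$. I would then run the Heath-Brown/Fouvry-Kl\"uners machinery: fixing the ``type'' of $\varphi$, reorder the primes and classify maximal \emph{unlinked sets}, i.e.\ subsets $S$ of the ramified primes for which all the pairwise Hilbert-symbol relations $\chi_{v_i}(v_j)$ degenerate. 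For any $s \neq 0$, the forced triviality of $\mathcal{L}_v$ at all $C$-type primes produces enough \emph{linking} relations that every unlinked set is bounded in size. Cauchy--Schwarz combined with the large sieve then shows the character sum is a power savings smaller than the total count, giving contribution $o(\#\mathcal{F}(X))$ after summing over $s$ and over $\varphi$-types.

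\textbf{Main obstacle.} The hard part is executing the combinatorics of maximal unlinked sets when $A$ is a general abelian $p$-group, since the ramified primes may support many distinct types of inertia $I_v \subseteq A$ and the bilinear character relations $\chi_{v_i}(v_j)$ are indexed by a rich combinatorial structure rather than just the elementary abelian case handled in \cite{KR} via an unfair counting function. Here $\mathfrak{f}$ is a fair counting, so no reduction to $A$ elementary abelian is available, and one must directly show that enforcing $\mathrm{res}_v(s)=0$ at all primes of $C$-type produces enough linked pairs to defeat any maximal unlinked configuration. Once this combinatorial bound is in place, assembly into the first moment and uniformity in $s$ (which can grow polynomially in $X$) proceeds along standard character sum lines.
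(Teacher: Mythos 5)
Your proposal takes essentially the same high-level approach as the paper: a first-moment computation, expanded into multilinear character sums via the parametrization of abelian $p$-extensions, with the key leverage being the fourth bullet of Definition~\ref{dLocal} forcing trivial local conditions at $C$-type primes, and the combinatorial core being a classification of maximal unlinked configurations fed into a bilinear large sieve. You have also correctly identified both the main obstacle (fair counting defeats the Koymans--Rome reduction to elementary abelian) and the rough strategy for the main term ($s = 0$ yields $\#\mathcal{F}(X)$, nontrivial $s$ yields a negligible contribution).

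That said, a few points in your sketch misstate the actual shape of the argument, and one of them is conceptual rather than merely unelaborated. First, the statement ``enforcing $\mathrm{res}_v(s) = 0$ at all $C$-type primes produces enough linked pairs that every unlinked set is bounded in size'' is not what the classification actually proves. The paper's maximal unlinked sets (Theorem~\ref{tUnlinkedClassification}) are not small: the distinguished one, $\mathcal{U}$, is as large as possible (one element per type $(a,b) \in (A-\{\mathrm{id}\}) \times A$), and there are other maximal unlinked families $\mathcal{U}_f$ of the same cardinality, plus further configurations satisfying conditions $(C1)$--$(C4)$ or $(A4)$. What is actually shown is that the \emph{weight} (the Selberg--Delange exponent) of every maximal unlinked $U \neq \mathcal{U}$ is strictly below the Mäki exponent governing $\#\mathcal{F}(X)$; this is a delicate numerical inequality, not a size bound. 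Second, before the large sieve can be applied at all, one must restrict to the subfamily $\mathcal{F}^\sharp(X)$ where, for every type $(a,b)$, the product of ramified primes of that type exceeds $G(X) = \exp((\log X)^{1/2})$; showing the complement is negligible (Theorem~\ref{twabBig}) is a nontrivial standalone large-sieve argument, not a consequence of the Wood/Malle heuristic you gesture at. Third, the main term does not fall out immediately from $s=0$: it requires a further dissection and a Siegel--Walfisz input (Lemma~\ref{lSW}), which in turn requires a careful normalization of the power-residue characters (Theorem~\ref{tGoodChoice}) to avoid bias. These are not cosmetic; omitting the first would leave the unlinked-set argument with the wrong target statement to prove.
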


Note that it is reasonable to expect Theorem \ref{tMainAnalytic} to hold. Indeed, for a typical $A$-extension, there should be a positive proportion of primes satisfying $\varphi(G_{\Q_v}) = \varphi(\mathcal{T}_v) = C$. Once this happens, we have more local conditions than candidate Selmer elements (this is also known as the Tamagawa ratio in the literature), and thus we should expect Theorem \ref{tMainAnalytic} to hold true. Making this heuristic rigorous will be the main task of the coming sections.

\subsection{Parametrization of abelian extensions}
In this subsection we study a convenient parametrization of abelian extensions, which is reminiscent of the work of Koymans--Rome \cite[Theorem 2.2]{KR}.

\begin{mydef}
Let $p$ be an odd prime and let $A$ be a finite abelian $p$-group. Define $\mathcal{A}$ to be the set of tuples $(w_a)_{a \in A - \{\textup{id}\}}$ with the properties
\begin{itemize}
\item the integers $w_a$ are positive, squarefree and pairwise coprime,
\item we have $q \mid w_a$ implies $q = p$ or $q \equiv 1 \bmod \ord(a)$.
\end{itemize}
Define $\mathrm{Ev}: \mathrm{Hom}(G_\Q, A) \rightarrow \mathcal{A}$ to be the map that sends $\varphi \in  \mathrm{Hom}(G_\Q, A)$ to the unique tuple $(w_a)_{a \in A - \{\textup{id}\}}$ satisfying the property
$$
q \mid w_a \Longleftrightarrow \varphi(\iota_q^\ast(\sigma_q)) = a
$$
for all $a \in A - \{\textup{id}\}$ and all primes $q$.
\end{mydef}

Fix an injection $\iota: \Q/\Z \xhookrightarrow{} \overline{\Q}^\ast$ sending $1/p$ to $\zeta$. Throughout the paper, we shall view $\Z/k\Z$ as a subset of $\Q/\Z$ by sending $1$ to $1/k$. 

Recall that $\chi_q: \Z \rightarrow \mathbb{C}$ is the unique Dirichlet character with period $q$ and of order $p$ that comes from our fixed embedding $\overline{\Q} \rightarrow \overline{\Q_q}$ as explained after the diagram \eqref{eChiq}. Let $q$ be a prime and let $n \geq 1$ be an integer. If $q \equiv 1 \bmod p^n$, then we define $\chi_{q, n}: G_\Q \rightarrow \mathbb{C}^\ast$ to be the unique homomorphism of order $p^n$ that is unramified away from $q$ and satisfies 
$$
\chi_{q, n}(\iota_q^\ast(\sigma_q)) = \iota(1/p^n).
$$
In particular, we observe that $\chi_{q, 1} = \chi_q$ by our conventions on $\sigma_q$ and $\iota(1/p) = \zeta$.

If $q \equiv 1 \bmod p$ but $q \not \equiv 1 \bmod p^n$, then we define $\chi_{q, n}: G_\Q \rightarrow \mathbb{C}^\ast$ as follows: let $m$ be the largest integer such that $q \equiv 1 \bmod p^m$ and take $\chi_{q, n}: G_\Q \rightarrow \mathbb{C}^\ast$ be any map of order $p^m$ such that 
$$
\chi_{q, n}^{p^{n - m}} = \chi_{q, m}.
$$
Moreover, we also define for every integer $n \geq 1$ the homomorphism $\chi_{p, n}: G_\Q \rightarrow \mathbb{C}^\ast$ that is unramified away from $p$ and satisfies
$$
\chi_{p, n}(\iota_p^\ast(\sigma_p)) = \iota(1/p^n).
$$
It is proven in Koymans--Rome \cite[Theorem 2.2]{KR} that $\mathrm{Ev}$ has an inverse, called $\mathrm{Par}$, so in particular $\mathrm{Ev}$ is a bijection between $\mathrm{Hom}(G_\Q, A)$ and $\mathcal{A}$. The map $\mathrm{Par}$ is constructed explicitly in \cite{KR}, but we shall only need the following (characterizing) property of $\text{Par}$.

Note that the subset $\{x \in \overline{\Q}^\ast : x^k = 1\}$ has a natural exponentiation map $\Z/k\Z \times \{x \in \overline{\Q}^\ast : x^k = 1\} \rightarrow \{x \in \overline{\Q}^\ast : x^k = 1\}$, which sends $(a, x)$ to $x^b$, where $b$ is any integer mapping to $a$ in $\Z/k\Z$. We shall simply write $x^a$ for this exponentiation map without further mention.

\begin{theorem}
\label{tMainPar}
Let $A$ be a finite abelian $p$-group. Then for every $\mathbf{w} \in \mathcal{A}$ and every $\psi \in \mathrm{Hom}(A, \Z/p^n\Z)$, the map $\iota(\psi(\mathrm{Par}(\mathbf{w})))$ is a Dirichlet character given by the formula
$$
\iota(\psi(\mathrm{Par}(\mathbf{w}))) = \prod_{a \in A - \{\textup{id}\}} \prod_{q \mid w_a} \chi_{q, n}^{\psi(a)}.
$$
Moreover, we have the formula
$$
\mathfrak{f}(\mathrm{Par}(\mathbf{w})) = \prod_{a \in A - \{\textup{id}\}} w_a,
$$
and the equivalence
$$
\mathrm{Par}(\mathbf{w}) \in \mathrm{Epi}(G_\Q, A) \Longleftrightarrow \langle a \in A - \{\textup{id}\} : w_a \neq 1 \rangle = A.
$$
\end{theorem}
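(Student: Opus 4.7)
The plan is to extract all three assertions from the characterizing property of $\mathrm{Par}$ as the two-sided inverse of $\mathrm{Ev}$. Setting $\varphi = \mathrm{Par}(\mathbf{w})$, this characterization reads: $\varphi(\iota_q^\ast(\sigma_q)) = a$ exactly when $q \mid w_a$, and $\varphi(\iota_q^\ast(\sigma_q)) = \mathrm{id}$ for $q$ dividing none of the $w_a$.

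For the formula identifying $\iota(\psi(\mathrm{Par}(\mathbf{w})))$, set $\chi := \prod_a \prod_{q \mid w_a} \chi_{q,n}^{\psi(a)}$. Both $\iota \circ \psi \circ \varphi$ and $\chi$ are characters $G_\Q \to \mathbb{C}^\ast$ of exponent dividing $p^n$, and the plan is to show they agree by comparing their restrictions to inertia at every rational prime. At a prime $q$, the left-hand side sends $\iota_q^\ast(\sigma_q)$ to $\iota(\psi(a))$ when $q \mid w_a$ and to $1$ otherwise; on the other hand, $\chi_{q',n}$ is unramified at $q$ for $q' \neq q$, so only the factor $\chi_{q,n}^{\psi(a)}$ contributes at $q$, yielding $\iota(1/p^n)^{\psi(a)} = \iota(\psi(a))$ as well. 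Consequently, the ratio of the two characters is everywhere unramified, hence trivial since $\Q$ admits no nontrivial abelian extensions that are unramified at every place.

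The conductor formula is then a direct consequence of step (1): a prime $q$ ramifies in the fixed field of $\ker(\varphi)$ iff some composition $\iota \circ \psi \circ \varphi$ (for $\psi$ ranging over $\mathrm{Hom}(A, \Z/p^n\Z)$ with $n$ large) is ramified at $q$, and by the formula this occurs exactly when $q$ divides some $w_a$ with $a \neq \mathrm{id}$. Because the $w_a$ are pairwise coprime and squarefree, we conclude $\mathfrak{f}(\mathrm{Par}(\mathbf{w})) = \prod_a w_a$. For the surjectivity criterion, set $H := \langle a \in A - \{\mathrm{id}\} : w_a \neq 1 \rangle$. One inclusion is immediate: for every $q \mid w_a$ the element $a$ lies in $\mathrm{Im}(\varphi)$, whence $H \subseteq \mathrm{Im}(\varphi)$. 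For the converse, consider the composition $G_\Q \xrightarrow{\varphi} A \twoheadrightarrow A/H$: every inertia image $\varphi(\iota_q^\ast(\sigma_q))$ lands in $H$ by construction, and the archimedean place contributes trivially since $A/H$ is a $p$-group with $p$ odd. This composition is therefore everywhere unramified and hence trivial, so $\mathrm{Im}(\varphi) \subseteq H$; the two inclusions together give $\mathrm{Im}(\varphi) = H$, and the equivalence follows.

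The hard part will be the rigorous matching of the two characters in step (1) at the prime $p$: since wild inertia at $p$ is pro-$p$, a character of order $p^n$ can be nontrivial on wild inertia, so agreement on a topological generator of tame inertia does not automatically force agreement on all of $\mathcal{T}_p$. Handling this will require invoking the explicit local construction of $\mathrm{Par}(\mathbf{w})$ at $p$ from Koymans--Rome \cite{KR} together with the precise convention fixing $\sigma_p$ (the one that makes the condition $\chi_{p,n}(\iota_p^\ast(\sigma_p)) = \iota(1/p^n)$ consistent across $n$), so that the values on $\iota_p^\ast(\sigma_p)$ do suffice to determine the full local character.
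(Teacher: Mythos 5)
The paper does not actually prove this theorem: it cites Koymans--Rome \cite[Theorem~2.2]{KR} for the bijectivity of $\mathrm{Ev}$ and for the explicit construction of $\mathrm{Par}$, and simply records the statement as the characterizing property to be used. So there is no in-paper proof to compare against, and your proposal is a genuine (blind) attempt at a derivation.

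Your plan --- derive the three formulas from the two-sided-inverse property by matching restrictions to inertia at every prime, then quote that $\Q$ has no nontrivial everywhere-unramified abelian extension --- is a reasonable and mostly-correct outline. The conductor and surjectivity arguments are essentially fine (surjectivity: $H \subseteq \operatorname{Im}(\varphi)$ since each $a$ with $w_a \neq 1$ is an inertia value, and $\operatorname{Im}(\varphi) \subseteq H$ since the composite $G_\Q \to A/H$ is unramified at all finite places and at $\infty$ because $p$ is odd). However, there is a genuine gap in the character-formula step at $v = p$, and you have named it correctly: for $q \neq p$ the pro-$p$ abelianization of $\mathcal{T}_q$ is procyclic generated by the image of $\sigma_q$, so matching at $\sigma_q$ controls all of inertia, but at $p$ the inertia group $\mathcal{T}_p$ has a nontrivial wild part, and a lift $\sigma_p$ of a tame generator is not automatically a topological generator of the pro-$p$ abelianization $\mathcal{T}_p^{\text{ab}} \otimes \Z_p \cong 1 + p\Z_p$. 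Without arranging that $\sigma_p$ maps to a topological generator there, (i) the normalization $\chi_{p,n}(\iota_p^\ast(\sigma_p)) = \iota(1/p^n)$ cannot even be imposed, and (ii) agreement of your two characters at $\sigma_p$ does not force agreement on inertia at $p$. You explicitly defer the resolution to "the explicit local construction of $\mathrm{Par}$ in \cite{KR}", but a deferral is not a proof; as written the argument is incomplete at exactly the point you flag, and closing it reduces to citing the same normalization of $\sigma_p$ on which the paper itself implicitly leans.

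A secondary loose end: when $q \equiv 1 \bmod p$ but $q \not\equiv 1 \bmod p^n$, the character $\chi_{q,n}$ has order $< p^n$ and $\chi_{q,n}(\iota_q^\ast(\sigma_q)) \neq \iota(1/p^n)$; the identity $\chi_{q,n}^{\psi(a)}(\iota_q^\ast(\sigma_q)) = \iota(\psi(a))$ must be checked using that $\psi(a)$ is divisible by $p^{\,n - \log_p \ord(a)}$ together with the defining relation among the $\chi_{q,m}$ (and note that $\ord(a) \mid q-1$ is guaranteed by the definition of $\mathcal{A}$). You glide over this in your step (1).
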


\subsection{Expansion into Dirichlet characters}
Instead of directly detecting triviality of the Selmer group in Theorem \ref{tMainAnalytic}, we shall compute a suitable average, after removing a small set of elements in $\mathcal{F}(X)$. We will justify that the removed set is small only much later in Theorem \ref{twabBig}, the main goal of this section is to combinatorially rewrite the average size of the Selmer group in such a way that our oscillation results in Section \ref{sAnalytic} will apply. By the parametrization Theorem \ref{tMainPar}, we have
$$
\sum_{\varphi \in \mathcal{F}(X)} \# \mathrm{Sel}(g(\varphi)) = 
\sum_{\substack{\mathbf{u} = (u_a)_{a \in A - \{\text{id}\}} \\ \prod_{a \in A - \{\text{id}\}} u_a \leq X \\ q \mid u_a \Rightarrow q = p \text{ or } q \equiv 1 \bmod \ord(a) \\ \langle a \in A - \{\text{id}\} : u_a \neq 1 \rangle = A}} \mu^2\left(\prod_{a \in A - \{\text{id\}}} u_a\right) \cdot \# \mathrm{Sel}(g(\text{Par}(\mathbf{u}))).
$$
Define $G(X) := \exp((\log X)^{1/2})$. Instead of considering the full set $\mathcal{F}(X)$, we define $\mathcal{F}^\sharp(X)$ to be the following subset
$$
\mathcal{F}^\sharp(X) := \left\{\varphi \in \mathcal{F}(X) \, | \, \forall a \in A - \{\text{id}\} \ \forall b \in A : \prod_{\substack{q \text{ prime}, \ q \neq p \\ \varphi(\iota_q^\ast(\sigma_q)) = a, \varphi(\iota_q^\ast(\Frob_q)) = b}} q > G(X) \right\}.
$$
We will now describe the subset of $\mathcal{A}$ given by the image of $\mathcal{F}^\sharp(X)$ under $\mathrm{Ev}$. Let 
$$
\mathcal{J} := \{(a, b) : a \in A - \{\text{id}\}, b \in A\}. 
$$
Henceforth, we shall frequently abuse notation and simply write $\text{Par}(\mathbf{u})(\Frob_q)$ and $\text{Par}(\mathbf{u})(\sigma_q)$ instead of the correct $\text{Par}(\mathbf{u})(\iota_q^\ast(\Frob_q))$ and $\text{Par}(\mathbf{u})(\iota_q^\ast(\sigma_q))$. Given $\mathbf{u}$, we define new variables $\mathbf{w} = (w_{a, b})_{(a, b) \in \mathcal{J}}$ and $\mathbf{f} = (f_a)_{a \in A - \{\text{id}\}}$ through
$$
w_{a, b} = \prod_{\substack{q \mid u_a, \ q \neq p \\ \text{Par}(\mathbf{u})(\Frob_q) = b}} q, \quad \quad f_a = \begin{cases}
p &\text{if } p \mid u_a \\
1 &\text{if } p \nmid u_a.
\end{cases}
$$
Given $\mathbf{f}$ and $\mathbf{w}$, we can recover the old variables $\mathbf{f w} := (u_a)_{a \in A - \{\text{id}\}}$ via the formula
$$
u_a = f_a \prod_{b \in A} w_{a, b}.
$$
With this notation set up, now observe that $\mathrm{Par}(\mathbf{f w}) \in \mathcal{F}^\sharp(X)$ precisely when $w_{a, b} > G(X)$ and $\prod_{a \in A - \{\text{id}\}} f_a \prod_{a \in A - \{\text{id}\}, b \in A} w_{a, b} \leq X$. Note that $\mathrm{Par}(\mathbf{f w})$ is indeed an epimorphism for sufficiently large $X$; this follows from $w_{a, b} > G(X) \geq 1$ and Theorem \ref{tMainPar}. Hence we have
\begin{multline*}
\sum_{\varphi \in \mathcal{F}^\sharp(X)} \# \mathrm{Sel}(g(\varphi)) = \sum_{\substack{\mathbf{w} = (w_{a, b})_{(a, b) \in \mathcal{J}}, \mathbf{f} = (f_a)_a \\ \prod_a f_a \prod_{(a, b) \in \mathcal{J}} w_{a, b} \leq X, \ w_{a, b} > G(X) \\ q \mid w_{a, b} \Rightarrow q \equiv 1 \bmod \ord(a) \\ q \mid f_a \Rightarrow q = p}} \mu^2\left(\prod_a f_a \prod_{(a, b) \in \mathcal{J}} w_{a, b}\right) \cdot \\
\# \mathrm{Sel}(g(\text{Par}(\mathbf{f w}))) \cdot \mathbf{1}_{q \mid w_{a, b} \Rightarrow \text{Par}(\mathbf{f w})(\Frob_q) = b},
\end{multline*}
We continue to expand $\# \mathrm{Sel}(g(\text{Par}(\mathbf{f w})))$. Suppose that $q \equiv 1 \bmod p$ divides $w_{a, b}$. The local condition $\mathcal{L}_q$ depends only on the values of inertia $\mathrm{Par}(\mathbf{f w})(\iota_q^\ast(\sigma_q)) = a$ and Frobenius $\mathrm{Par}(\mathbf{f w})(\iota_q^\ast(\Frob_q)) = b$, i.e.~$\mathcal{L}_q$ is determined by the pair 
$$
(a, b) \in \mathcal{J}
$$ 
thanks to the second bullet point of Definition \ref{dLocal}. Thus it makes sense to define $\mathcal{J}'$ to be the subset of $\mathcal{J}$ where $\mathcal{L}_q$ is contained inside the unramified local conditions, and $\mathcal{J}''$ to be the subset of $\mathcal{J}$ where $\mathcal{L}_q = 0$. By the ramification constraints on $\mathrm{Sel}(g(\text{Par}(\mathbf{f w})))$, we see that any Selmer element $\kappa \in H^1(G_\Q, \langle \zeta \rangle) \cong \Q^\ast/\Q^{\ast p}$ is of the shape
\begin{align}
\label{eKappa}
\kappa = p^\alpha \prod_{i = 0}^{p - 1} d_i^i
\end{align}
with $d_0, \dots, d_{p - 1}$ coprime integers multiplying to $\prod_{j \not \in \mathcal{J}'} w_j$ and $\alpha \in \{0, \dots, p - 1\}$. Note that $\iota_v(\kappa)$ satisfies the local Selmer conditions from Definition \ref{dLocal} at all places $v$ except possibly at primes $q$ dividing $w_j$ for some $j \not \in \mathcal{J}'$ or $j \in \mathcal{J}''$. We will now check the local conditions at those primes $q$.

Recall that $\chi_q: \Z \rightarrow \mathbb{C}$ is the unique Dirichlet character with period $q$ and of order $p$ that comes from our fixed embedding $\overline{\Q} \rightarrow \overline{\Q_q}$ as explained after the diagram \eqref{eChiq}. With this notation, we can detect the local condition at $q$ as follows. First suppose that $q \mid w_j$ with $j \in \mathcal{J}''$. By equation \eqref{eKappa}, we see that $q \nmid \kappa$, and hence the local condition is equivalent to $\kappa$ being a $p$-th power modulo $q$. We detect this condition with the expression
\begin{align}
\label{eDetect1}
\mathbf{1}_{\res_q(\kappa) = 0} = \frac{1}{p} \sum_{c \in \mathbb{F}_p} \chi_q^c(\kappa).
\end{align}
Next suppose that $q \mid w_j$ with $j \not \in \mathcal{J}'$, so $q \mid d_\nu$ for some unique index $\nu \in \{0, \dots, p - 1\}$. Write $\delta: \Q_q^\ast/\Q_q^{\ast p} \rightarrow H^1(G_{\Q_q}, \langle \iota_q(\zeta) \rangle)$ for the local Kummer map at $q$. In this case, it follows from the second bullet point of Definition \ref{dLocal} that the local condition is of the shape
\begin{align}
\label{eejLocal}
\delta(\iota_q(\kappa))\left(\Frob_q \sigma_q^{e(j)}\right) = 1
\end{align}
for some exponent $e(j)$ depending only on $j \not \in \mathcal{J}'$. Since $\delta(\iota_q(\kappa))(\Frob_q) = \chi_q(\kappa/(-q)^\nu)$ by equation \eqref{eChiq} and $\delta(\iota_q(\kappa))(\sigma_q^{e(j)}) = \iota_q(\zeta)^{\nu e(j) s(q)}$ by equation \eqref{eInertiaChoice}, we may detect the condition \eqref{eejLocal} with the expression
\begin{align}
\label{eDetect2}
\mathbf{1}_{\delta(\iota_q(\kappa))\left(\Frob_q \sigma_q^{e(j)}\right) = 1} = \frac{1}{p} \sum_{c \in \mathbb{F}_p} \zeta^{c \nu e(j) s(q)} \chi_q^c(\kappa/(-q)^\nu).
\end{align}
By combining \eqref{eDetect1} and \eqref{eDetect2}, it follows that $\# \mathrm{Sel}(g(\text{Par}(\mathbf{f w})))$ equals
$$
\sum_{\alpha \in \mathbb{F}_p} \sum_{d_0 \cdots d_{p - 1} = \prod_{j \not \in \mathcal{J}'} w_j} \prod_{j \not \in \mathcal{J}'} \prod_{q \mid w_j} \left(\frac{1}{p} \sum_{c \in \mathbb{F}_p} \zeta^{c \nu e(j) s(q)} \chi_q^c(\kappa/(-q)^\nu)\right) \prod_{j \in \mathcal{J}''} \prod_{q \mid w_j} \left(\frac{1}{p} \sum_{c \in \mathbb{F}_p} \chi_q^c(\kappa)\right),
$$
where $\nu$ is the unique index $\nu \in \{0, \dots, p - 1\}$ with $q \mid d_\nu$ and where $\kappa$ is given by equation \eqref{eKappa}. Before we continue, it will be useful to make another change of variables. For each $(a, b) \not \in \mathcal{J}'$ and each $\nu \in \{0, \dots, p - 1\} \cong \mathbb{F}_p$, we introduce the variable
$$
w_{a, b, \nu} = \gcd(w_{a, b}, d_\nu).
$$
We define a corresponding set of indices by
\begin{align*}
\mathcal{I}_1 = &\{(a, b) : a \in A - \{\text{id}\}, b \in A, (a, b) \in \mathcal{J}'\} \cup \\
&\{(a, b, \nu) : a \in A - \{\text{id}\}, b \in A, (a, b) \not \in \mathcal{J}', \nu \in \mathbb{F}_p\},
\end{align*}
and we write $\mathcal{I}'_1 := \mathcal{I}_1 - \mathcal{J}'$ (this equals the second set in the above formula). Therefore the average of $\# \mathrm{Sel}(g(\text{Par}(\mathbf{f w})))$ over all $\mathbf{f}$ and $\mathbf{w}$ equals
\begin{multline*}
\sum_{\substack{\mathbf{w} = (w_i)_{i \in \mathcal{I}_1} \\ \mathbf{f} = (f_a)}}^\flat \sum_{\alpha \in \mathbb{F}_p} \prod_{(a, b, \nu) \in \mathcal{I}'_1} \prod_{q \mid w_{a, b, \nu}} \left(\frac{1}{p} \sum_{c \in \mathbb{F}_p} \zeta^{c \nu e(a, b) s(q)} \chi_q^c\left(p^\alpha (-q)^{-\nu} \prod_{(a', b', \nu') \in \mathcal{I}'_1} w_{a', b', \nu'}^{\nu'}\right)\right) \times \\
\prod_{(a, b) \in \mathcal{J}''} \prod_{q \mid w_{a, b}} \left(\frac{1}{p} \sum_{c \in \mathbb{F}_p} \chi_q^c\left(p^\alpha \prod_{\substack{(a', b', \nu') \in \mathcal{I}'_1}} w_{a', b', \nu'}^{\nu'}\right)\right),
\end{multline*}
where the summation conditions $\flat$ on the outer sum over $w_i$, with $i = (a, b)$ or $i = (a, b, \nu)$, and $f_a$ are:
\begin{gather*}
q \mid w_i \Rightarrow q \equiv 1 \bmod \ord(a), \quad \mu^2\left(\prod_{i \in \mathcal{I}_1} w_i\right) = 1, \quad f_a \mid p, \quad  \mu^2\left(\prod_a f_a\right) = 1 \\
\prod_a f_a \prod_{i \in \mathcal{I}_1} w_i \leq X, \quad q \mid w_i \Longleftrightarrow \mathrm{Par}(\mathbf{f} \mathbf{w})(\Frob_q) = b \\
(a, b) \in \mathcal{J}' \Longrightarrow w_{a, b} > G(X), \quad (a, b) \not \in \mathcal{J}' \Longrightarrow \prod_{\nu \in \mathbb{F}_p} w_{a, b, \nu} > G(X).
\end{gather*}
Here we apply the parametrization map to $\mathbf{f w}$ by changing back to the old variables in the natural way. 

To remove the term $(-q)^{-\nu}$ in our Dirichlet characters, we now introduce \emph{modified Dirichlet characters}. These will go against the standard conventions for Dirichlet characters. Our definition is based on \cite[Definition 5.3]{Smi22a}.

\begin{mydef}
Recall that we have chosen a Dirichlet character $\chi_q$ (of period $q$ and order $p$) for each prime $q \equiv 1 \bmod p$. We define a modified Dirichlet character $\psi_q: \Z_{\geq 1} \rightarrow \mathbb{C}^\ast$ to be the unique strongly multiplicative function that is given on primes $p$ by
$$
\psi_q(p) = 
\begin{cases}
\chi_q(p) &\textup{if } p \neq q \\
1 &\textup{if } p = q.
\end{cases}
$$
Moreover, if $n$ is a squarefree integer only divisible by primes $1$ modulo $p$, we set
$$
\psi_n = \prod_{q \mid n} \psi_q.
$$
\end{mydef}

With this definition set, we rewrite the average of $\# \mathrm{Sel}(g(\text{Par}(\mathbf{f w})))$ over all $\mathbf{f}$ and $\mathbf{w}$ as
\begin{multline*}
\sum_{\substack{\mathbf{w} = (w_i)_{i \in \mathcal{I}_1} \\ \mathbf{f} = (f_a)}}^\flat \sum_{\alpha \in \mathbb{F}_p} \prod_{(a, b, \nu) \in \mathcal{I}'_1} \prod_{q \mid w_{a, b, \nu}} \left(\frac{1}{p} \sum_{c \in \mathbb{F}_p} \zeta^{c \nu e(a, b) s(q)} \psi_q^c\left(p^\alpha (-1)^\nu \prod_{(a', b', \nu') \in \mathcal{I}'_1} w_{a', b', \nu'}^{\nu'}\right)\right) \times \\
\prod_{\substack{(a, b) \in \mathcal{J}''}} \prod_{q \mid w_{a, b}} \left(\frac{1}{p} \sum_{c \in \mathbb{F}_p} \psi_q^c\left(p^\alpha \prod_{(a', b', \nu') \in \mathcal{I}'_1} w_{a', b', \nu'}^{\nu'}\right)\right).
\end{multline*}
Define the index sets
\begin{align*}
\mathcal{I}_2' := &\{(a, b, c, \nu) : a \in A - \{\text{id}\}, b \in A, \nu, c \in \mathbb{F}_p : (a, b) \not \in \mathcal{J}'\} \\
\mathcal{I}_2'' := &\{(a, b, c) : a \in A - \{\text{id}\}, b \in A, c \in \mathbb{F}_p : (a, b) \in \mathcal{J}''\} \\
\mathcal{I}_2''' := &\{(a, b): a \in A - \{\text{id}\}, b \in A, (a, b) \in \mathcal{J}' - \mathcal{J}''\} \\
\mathcal{I}_2 :=& \mathcal{I}_2' \cup \mathcal{I}_2'' \cup \mathcal{I}_2'''.
\end{align*}
Write $a(i)$, $b(i)$, $c(i)$ and $\nu(i)$ for respectively the $a$-coordinate, $b$-coordinate, $c$-coordinate and $\nu$-coordinate of an element $i \in \mathcal{I}_2$, and write $h(\mathbf{w}, \alpha)$ for the multiplicative function in $\mathbf{w}$ given by
$$
h(\mathbf{w}, \alpha) := \prod_{i \in \mathcal{I}_2' \cup \mathcal{I}_2''} \frac{\psi_{w_i}^{c(i)}(p^\alpha)}{p^{\omega(w_i)}} \prod_{i \in \mathcal{I}_2'} \prod_{q \mid w_i} \zeta^{c(i) \nu(i) e(a(i), b(i)) s(q)}.
$$
For odd order Dirichlet characters, we always have $\psi_q(-1) = 1$. Using this and expanding the above two products shows that the total sum equals
\begin{align}
\label{eSum}
\sum_{\substack{\mathbf{w} = (w_i)_{i \in \mathcal{I}_2} \\ \mathbf{f} = (f_a)}}^{\flat \flat} \sum_{\alpha \in \mathbb{F}_p} h(\mathbf{w}, \alpha) \prod_{\substack{k \in \mathcal{I}_2' \\ \ell \in \mathcal{I}_2'}} \psi_{w_k}^{c(k)}\left(w_\ell^{\nu(\ell)}\right) \prod_{\substack{k \in \mathcal{I}_2'' \\ \ell \in \mathcal{I}_2'}} \psi_{w_k}^{c(k)}\left(w_\ell^{\nu(\ell)}\right).
\end{align}
The summation conditions $\flat \flat$ in the sum \eqref{eSum} restrict the summation over $w_i$ to positive squarefree, pairwise coprime integers satisfying the conditions
\begin{gather*}
q \mid w_i \Rightarrow q \equiv 1 \bmod \ord(a(i)), \quad f_a \mid p, \quad \prod_a f_a \prod_{i \in \mathcal{I}_2} w_i \leq X \\
q \mid w_i \Longleftrightarrow \mathrm{Par}(\mathbf{f} \mathbf{w})(\Frob_q) = b(i), \quad \mu^2\left(\prod_a f_a\right) = 1 \\
\forall a \in A - \{\text{id}\} \ \forall b \in A : \prod_{\substack{i \in \mathcal{I}_2 \\ a(i) = a, b(i) = b}} w_i > G(X).
\end{gather*}
We now detect the condition $q \mid w_i$ implies $\text{Par}(\mathbf{f w})(\Frob_q) = b(i)$. Define $A^\vee = \Hom(A, \Q/\Z)$ and recall that we have fixed an injection $\iota: \Q/\Z \xhookrightarrow{} \mathbb{C}^\ast$. For $q \mid w_i$, we have the equality 
$$
\mathbf{1}_{\text{Par}(\mathbf{f w})(\Frob_q) = b} = \frac{1}{|A|} \sum_{\chi \in A^\vee} \iota\left(\chi(\text{Par}(\mathbf{f w})(\Frob_q) - b)\right).
$$
Define the index sets
\begin{align}
\label{eI3}
\mathcal{I}_3' := &\{(a, b, \chi, c, \nu) : a \in A - \{\text{id}\}, b \in A, \chi \in A^\vee, \nu, c \in \mathbb{F}_p : (a, b) \not \in \mathcal{J}'\} \nonumber \\
\mathcal{I}_3'' := &\{(a, b, \chi, c) : a \in A - \{\text{id}\}, b \in A, \chi \in A^\vee, c \in \mathbb{F}_p : (a, b) \in \mathcal{J}''\} \nonumber \\
\mathcal{I}_3''' := &\{(a, b, \chi): a \in A - \{\text{id}\}, b \in A, \chi \in A^\vee, (a, b) \in \mathcal{J}' - \mathcal{J}''\} \nonumber \\
\mathcal{I}_3 :=& \mathcal{I}_3' \cup \mathcal{I}_3'' \cup \mathcal{I}_3'''.
\end{align}
Denote by $\chi(k)$ the $\chi$-coordinate of some index $k \in \mathcal{I}_3$ and denote by $\tilde{h}(\mathbf{w}, \alpha)$ the multiplicative function given by
\begin{equation}
\label{eDefTildeh}
\tilde{h}(\mathbf{w}, \alpha) := \prod_{i \in \mathcal{I}_3} \left(\frac{\iota(\chi(i)(-b(i)))}{|A|}\right)^{\omega(w_i)} \prod_{i \in \mathcal{I}_3' \cup \mathcal{I}_3''} \frac{\psi_{w_i}^{c(i)}(p^\alpha)}{p^{\omega(w_i)}} \prod_{i \in \mathcal{I}_3'} \prod_{q \mid w_i} \zeta^{c(i) \nu(i) e(a(i), b(i)) s(q)}.
\end{equation}
Expansion and changing variables one more time yields 
$$
\sum_{\substack{\mathbf{w} = (w_i)_{i \in \mathcal{I}_3} \\ \mathbf{f} = (f_a)}}^{\flat \flat \flat} \sum_{\alpha \in \mathbb{F}_p} \tilde{h}(\mathbf{w}, \alpha) \prod_{\substack{k \in \mathcal{I}_3' \\ \ell \in \mathcal{I}_3'}} \psi_{w_k}^{c(k)}\left(w_\ell^{\nu(\ell)}\right) \prod_{\substack{k \in \mathcal{I}_3'' \\ \ell \in \mathcal{I}_3'}} \psi_{w_k}^{c(k)}\left(w_\ell^{\nu(\ell)}\right) \prod_{\ell \in \mathcal{I}_3} \iota\left(\chi(\ell)(\mathrm{Par}(\mathbf{f w})(\Frob_{w_\ell}))\right),
$$
where $\flat \flat \flat$ restricts the summation to positive squarefree, pairwise coprime integers satisfying
\begin{gather}
q \mid w_i \Rightarrow q \equiv 1 \bmod \ord(a(i)), \quad f_a \mid p, \quad \prod_a f_a \prod_{i \in \mathcal{I}_3} w_i \leq X \nonumber \\ 
\mu^2\left(\prod_a f_a\right) = 1, \quad \forall a \in A - \{\text{id}\} \ \forall b \in A : \prod_{\substack{i \in \mathcal{I}_3 \\ a(i) = a, b(i) = b}} \hspace{-0.2cm} w_i > G(X). \label{eSummationConditions}
\end{gather}
Finally, we use Theorem \ref{tMainPar} to rewrite $\iota \circ \chi \circ \text{Par}(\mathbf{f w})$. We end this section by formally stating what we have proven so far. 

\begin{theorem}
\label{tCharSum}
We have the identity
\begin{multline*}
\sum_{\varphi \in \mathcal{F}^\sharp(X)} \# \mathrm{Sel}(g(\varphi)) = \sum_{\substack{\mathbf{w} = (w_i)_{i \in \mathcal{I}_3} \\ \mathbf{f} = (f_a)}}^{\flat \flat \flat} \sum_{\alpha \in \mathbb{F}_p} \tilde{h}(\mathbf{w}, \alpha) \times \\
\prod_{\substack{k \in \mathcal{I}_3' \\ \ell \in \mathcal{I}_3'}} \psi_{w_k}^{c(k)}\left(w_\ell^{\nu(\ell)}\right) \prod_{\substack{k \in \mathcal{I}_3'' \\ \ell \in \mathcal{I}_3'}} \psi_{w_k}^{c(k)}\left(w_\ell^{\nu(\ell)}\right) \prod_{\ell \in \mathcal{I}_3} \prod_{k \in \mathcal{I}_3} \prod_{q \mid w_k} \chi_{q, \log_p \ord(\chi(\ell))}(w_\ell)^{\chi(\ell)(a(k))} \times \\
\prod_{\ell \in \mathcal{I}_3} \prod_{a \in A - \{\textup{id}\}} \chi_{f_a, \log_p \ord(\chi(\ell))}(w_\ell)^{\chi(\ell)(a)},
\end{multline*}
where the summation conditions are in \eqref{eSummationConditions} and the function $\tilde{h}$ is defined in equation \eqref{eDefTildeh}.
\end{theorem}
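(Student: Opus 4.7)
The plan is to execute in reverse the combinatorial chain of identities sketched in the preceding paragraphs, so the theorem is really a bookkeeping consolidation. The starting move is to apply Theorem \ref{tMainPar} to replace each $\varphi \in \mathcal{F}^\sharp(X)$ with the tuple $(u_a)_{a \in A - \{\text{id}\}}$ obtained from $\mathrm{Ev}(\varphi)$, so that $\mathfrak{f}(\varphi) = \prod_a u_a$ and epimorphism is automatic once $X$ is large. I then split each $u_a$ as $u_a = f_a \prod_b w_{a,b}$, where $f_a \in \{1,p\}$ records the $p$-part and $w_{a,b}$ collects the primes $q \mid u_a$ with $\mathrm{Par}(\mathbf{u})(\Frob_q) = b$. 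The defining condition of $\mathcal{F}^\sharp(X)$ translates verbatim into $w_{a,b} > G(X)$, and the link $q \mid w_{a,b} \Leftrightarrow \mathrm{Par}(\mathbf{f w})(\Frob_q) = b$ is simply the definition of $w_{a,b}$ — it will be re-encoded by character orthogonality later on, once all other steps are in place.

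Next I would expand $\# \mathrm{Sel}(g(\mathrm{Par}(\mathbf{f w})))$ by Kummer theory: every class is $\kappa = p^\alpha \prod_{i=0}^{p-1} d_i^i$ with $d_0 \cdots d_{p-1} = \prod_{j \notin \mathcal{J}'} w_j$, and the index $\nu$ records in which $d_\nu$ a given prime $q \mid w_j$ lies. The local Selmer condition is detected at each such prime by Fourier inversion over $\mathbb{F}_p$: for $j \in \mathcal{J}''$ this is \eqref{eDetect1}; for $j \notin \mathcal{J}'$, combining the explicit descriptions of $\delta(\iota_q(\kappa))(\Frob_q)$ coming from \eqref{eChiq} and of $\delta(\iota_q(\kappa))(\sigma_q)$ coming from \eqref{eInertiaChoice} produces \eqref{eDetect2}. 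Changing variables to the refined tuple $(w_{a,b,\nu})$ gives the index set $\mathcal{I}_1$, after which absorbing $(-q)^{-\nu}$ via the modified character $\psi_q$ — which agrees with $\chi_q$ at every prime except $q$ itself and satisfies $\psi_q(-1) = 1$ since $p$ is odd — and expanding the inner sums over $c \in \mathbb{F}_p$ produces the index set $\mathcal{I}_2$ and the intermediate formula \eqref{eSum}.

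Finally I would detect the remaining Frobenius constraint $\mathrm{Par}(\mathbf{f w})(\Frob_q) = b(i)$ by orthogonality of characters on $A$, i.e.~by summing $|A|^{-1} \sum_{\chi \in A^\vee} \iota(\chi(\mathrm{Par}(\mathbf{f w})(\Frob_q) - b(i)))$ over each prime $q \mid w_i$. This produces the extra $\chi$-coordinate in the index set, yielding $\mathcal{I}_3$ as in \eqref{eI3}, and the $\chi$-independent factors assemble exactly into $\tilde{h}(\mathbf{w}, \alpha)$ from \eqref{eDefTildeh}. Applying Theorem \ref{tMainPar} one more time to each factor $\iota(\chi(\ell)(\mathrm{Par}(\mathbf{f w})(\Frob_{w_\ell})))$ rewrites it as $\prod_k \prod_{q \mid w_k} \chi_{q, \log_p \mathrm{ord}(\chi(\ell))}(w_\ell)^{\chi(\ell)(a(k))}$ together with the analogous product over the primes dividing $f_a$, which are precisely the two final products in the statement. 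The one piece of bookkeeping worth double-checking — and the only real subtlety in the derivation — is that self-character values at primes dividing $w_\ell$ itself are correctly handled, i.e.~that the passage from $\chi_q$ to $\psi_q$ (and from $\mathrm{Par}(\mathbf{f w})(\Frob_q)$ evaluated at $q \mid w_\ell$ back to $\chi_{q,n}$ via Theorem \ref{tMainPar}) is consistent with $\psi_q(q) = 1$; once this is verified, collecting the factors gives exactly the formula claimed.
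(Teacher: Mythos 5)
Your proposal correctly reproduces, step by step, the chain of identities developed in Section \ref{sChar}, which is exactly the paper's (implicit) proof of Theorem~\ref{tCharSum} — the theorem is stated with the remark ``we end this section by formally stating what we have proven so far.'' The subtlety you flag at the end is indeed the right thing to be careful about: the factors $\chi_{q, n}(w_\ell)$ with $q \mid w_\ell$ (arising when $k = \ell$) must be read via the normalized Galois evaluation $\chi_{q, n}(\Frob_q) = 1$, consistent with the convention $\psi_q(q) = 1$, rather than as classical Dirichlet character values, and the paper's normalization of $\Frob_q$ at the start of Section~\ref{sChar} is precisely what guarantees this.
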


\section{Combinatorics}
The main result of this section is Theorem \ref{tUnlinkedClassification}. Before we embark on the proof, we start with two lemmas.

\begin{lemma}
\label{lExtendingF}
Let $A$ be a finite abelian group and let $S \subseteq A$. Let $f_0: S \rightarrow \mathbb{F}_p$ be a function. Assume that for all $a_1, \dots, a_m \in S$ and $k_1, \dots, k_m \in \Z$ with $k_1 a_1 + \dots + k_m a_m = 0$ we have
$$
\sum_{i = 1}^m k_i f_0(a_i) = 0.
$$
Then there exists a homomorphism $f: \langle S \rangle \rightarrow \mathbb{F}_p$ such that $f(s) = f_0(s)$ for all $s \in S$.
\end{lemma}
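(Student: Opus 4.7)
The plan is to extend $f_0$ by $\mathbb{Z}$-linearity. Since $A$ is abelian, every element $x \in \langle S \rangle$ can be written as $x = \sum_{i=1}^{m} k_i a_i$ for some $a_i \in S$ and $k_i \in \mathbb{Z}$, so I would define
$$
f(x) := \sum_{i=1}^{m} k_i f_0(a_i) \pmod{p}.
$$
Conceptually, this amounts to considering the free abelian group $\mathbb{Z}^{(S)}$ on the set $S$, the obvious $\mathbb{Z}$-linear extension $\tilde{f}_0: \mathbb{Z}^{(S)} \rightarrow \mathbb{F}_p$ of $f_0$, and the surjection $\pi: \mathbb{Z}^{(S)} \twoheadrightarrow \langle S \rangle$ sending a formal sum to its actual sum in $A$. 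The map $f$ is then the unique map for which $\tilde{f}_0 = f \circ \pi$.

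The one step requiring verification is that $f$ is well-defined, and this is exactly where the hypothesis is used. Suppose $x = \sum_i k_i a_i = \sum_j \ell_j b_j$ are two $\mathbb{Z}$-linear expressions for $x$ in terms of elements of $S$; by padding with zero coefficients we may assume both expressions use the same list of elements, so the difference yields a relation $\sum_i (k_i - \ell_i) a_i = 0$ in $A$. The hypothesis then forces $\sum_i (k_i - \ell_i) f_0(a_i) = 0$ in $\mathbb{F}_p$, so the two candidate values for $f(x)$ agree. Equivalently, $\tilde{f}_0$ vanishes on $\ker \pi$, so it factors through $\langle S \rangle$.

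Once well-definedness is secured, the homomorphism property is immediate: concatenating representations of $x$ and $y$ gives a representation of $x+y$, and summing coefficient-wise shows $f(x+y) = f(x) + f(y)$. Agreement with $f_0$ on $S$ follows by taking the representation $s = 1 \cdot s$. I do not anticipate any real obstacle here — the statement is essentially the universal property of the free abelian group on $S$, and the hypothesis is precisely the compatibility condition needed for the extension to exist and be uniquely determined.
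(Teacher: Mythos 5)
Your proof is correct and takes essentially the same route as the paper: both use the free abelian group on $S$, observe that the hypothesis is exactly the statement that the $\mathbb{Z}$-linear extension of $f_0$ kills the kernel of the surjection $\mathbb{Z}^{(S)} \twoheadrightarrow \langle S \rangle$, and then descend to the quotient. Nothing to add.
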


\begin{proof}
Denote by $\Z^S$ the free abelian group on the set $S$. Using the universal property of free abelian groups, we get induced homomorphisms $\overline{f_0}$ and $\overline{\iota}$ (coming from respectively the map $f_0: S \rightarrow \FF_p$ and the inclusion $\iota: S \hookrightarrow A$) that we depict in the following diagram
\begin{equation*}
\begin{tikzcd}
\Z^S \arrow[r, "\overline{f_0}"] \arrow[dr, swap, two heads, "\overline{\iota}"] & \mathbb{F}_p \\
& \langle S \rangle. 
\end{tikzcd}
\end{equation*}
Now the assumption in the lemma is precisely equivalent to $\ker(\overline{\iota}) \subseteq \ker(\overline{f_0})$. Therefore we get an induced map 
$$
\frac{\Z^S}{\ker(\overline{\iota})} \xlongrightarrow{\overline{f_0}} \mathbb{F}_p.
$$
Exploiting the isomorphism $\Z^S/\ker(\overline{\iota}) \cong \langle S \rangle$ gives the desired homomorphism.
\end{proof}

\begin{lemma}
\label{lGlue}
Let $A$ be a finite abelian group and let $S, T \subseteq A$. Let $f: \langle S \rangle \rightarrow \mathbb{F}_p$ and $g: \langle T \rangle \rightarrow \mathbb{F}_p$ be homomorphisms. Assume that for all $a_1, \dots, a_m \in S$, all $k_1, \dots, k_m \in \Z$, all $b_1, \dots, b_n \in T$ and all $l_1, \dots, l_n \in \Z$ with $k_1 a_1 + \dots + k_m a_m = l_1 b_1 + \dots + l_n b_n$ we have
$$
\sum_{i = 1}^m k_i f(a_i) = \sum_{j = 1}^n l_j g(b_j).
$$
Then there exists a homomorphism $h: \langle S \cup T \rangle \rightarrow \mathbb{F}_p$ such that $h(s) = f(s)$ for all $s \in S$ and $h(t) = g(t)$ for all $t \in T$.
\end{lemma}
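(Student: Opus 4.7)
The plan is to reduce Lemma \ref{lGlue} directly to Lemma \ref{lExtendingF} by combining $f$ and $g$ into a single function on $S \cup T$ and then checking that the hypothesis of Lemma \ref{lExtendingF} follows from the hypothesis of Lemma \ref{lGlue}.

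First I would define a candidate function $h_0 \colon S \cup T \to \mathbb{F}_p$ by setting $h_0(s) = f(s)$ for $s \in S$ and $h_0(t) = g(t)$ for $t \in T \setminus S$. To see this is well-defined as a function on all of $S \cup T$, it suffices to check that $f$ and $g$ agree on $S \cap T$. If $a \in S \cap T$, then we have the trivial relation $1 \cdot a = 1 \cdot a$ in $A$ (with $m = n = 1$, $a_1 = a \in S$ and $b_1 = a \in T$), and so the hypothesis of Lemma \ref{lGlue} gives $f(a) = g(a)$, as required.

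Next I would verify the hypothesis of Lemma \ref{lExtendingF} for $h_0$ on the set $S \cup T$. Suppose $c_1, \dots, c_r \in S \cup T$ and $n_1, \dots, n_r \in \Z$ satisfy $\sum_{i} n_i c_i = 0$ in $A$. Partition the indices as $I = \{i : c_i \in S\}$ and $J = \{i : c_i \in T \setminus S\}$, so that
\[
\sum_{i \in I} n_i c_i \;=\; \sum_{j \in J} (-n_j) c_j,
\]
which is an equality between an integer combination of elements of $S$ and an integer combination of elements of $T$. By the hypothesis of Lemma \ref{lGlue}, this forces
\[
\sum_{i \in I} n_i f(c_i) \;=\; \sum_{j \in J} (-n_j) g(c_j),
\]
i.e.\ $\sum_i n_i h_0(c_i) = 0$ in $\mathbb{F}_p$.

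Finally, applying Lemma \ref{lExtendingF} to $h_0$ yields a homomorphism $h \colon \langle S \cup T \rangle \to \mathbb{F}_p$ extending $h_0$; in particular $h$ agrees with $f$ on $S$ and with $g$ on $T$, which is the desired conclusion. There is essentially no obstacle here: the work was already done in Lemma \ref{lExtendingF}, and the hypothesis of Lemma \ref{lGlue} is precisely tailored to guarantee both the well-definedness of $h_0$ on the overlap $S \cap T$ and the compatibility condition needed to invoke Lemma \ref{lExtendingF}.
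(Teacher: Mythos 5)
Your proof is correct, and the underlying idea is the same as the paper's: the hypothesis is exactly a kernel-containment statement for the natural maps out of the free abelian group on $S\cup T$. The only difference is in packaging. The paper reproves Lemma \ref{lGlue} from scratch by directly introducing the two homomorphisms $\Z^{S\cup T}\twoheadrightarrow\langle S\cup T\rangle$ and $\Z^{S\cup T}\to\mathbb{F}_p$ and observing that the hypothesis says the first kernel sits inside the second; you instead glue $f$ and $g$ into a single function $h_0$ on $S\cup T$ (checking compatibility on $S\cap T$ via the trivial relation $a=a$) and then invoke Lemma \ref{lExtendingF}, which encapsulates exactly the free-group argument. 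Your reduction is a clean reuse of the prior lemma and avoids repeating the argument; the paper's version is self-contained but duplicates two lines of reasoning. One small point to state explicitly in your verification step: when you partition the indices and write $\sum_{i\in I}n_i c_i=\sum_{j\in J}(-n_j)c_j$, you should note that the hypothesis of Lemma \ref{lGlue} does not require the $a_i$ (resp.\ $b_j$) to be distinct, so it applies even if some $c_i$ repeat or lie in $S\cap T$; this is implicit in your argument and harmless, but worth a word.
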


\begin{proof}
It is easy to see that $f = g$ on $S \cap T$. We now consider the two induced homomorphisms on the free group $\varphi: \Z^{S \cup T} \rightarrow \mathbb{F}_p$ and $\psi: \Z^{S \cup T} \twoheadrightarrow \langle S \cup T \rangle$. Our assumption precisely reads that $\ker(\psi) \subseteq \ker(\varphi)$. The resulting map $\widetilde{\varphi}: \Z^{S \cup T}/\ker(\psi) \rightarrow \mathbb{F}_p$ is the desired homomorphism.
\end{proof}

We recall the definitions of $\mathcal{I}_3$, $\mathcal{I}_3'$, $\mathcal{I}_3''$ and $\mathcal{I}_3'''$ from \eqref{eI3}.

\begin{mydef}
Call two elements $u_1, u_2 \in \mathcal{I}_3$ unlinked if
$$
\chi(u_1)(a(u_2)) + \mathbf{1}_{u_1, u_2 \in \mathcal{I}_3'} \cdot c(u_2) \nu(u_1) + \mathbf{1}_{u_1 \in \mathcal{I}_3', u_2 \in \mathcal{I}_3''} \cdot c(u_2) \nu(u_1) = 0.
$$
We say that a subset $U$ of $\mathcal{I}_3$ is unlinked if
\begin{itemize}
\item every two distinct $u_1, u_2 \in U$ are unlinked, and
\item the projection map $\pi: U \rightarrow (A - \{\textup{id}\}) \times A$ is surjective.
\end{itemize}
We say that $U$ is maximal unlinked if $U$ is unlinked and moreover every strict superset is not unlinked. In other words, $U$ is maximal under inclusion among the unlinked subsets of $\mathcal{I}_3$.
\end{mydef}

The main goal of this section is to (almost) classify the maximal unlinked sets. This is achieved in our next theorem, for which we set up some notation now. 

\begin{mydef}
Write $p_1: \mathcal{I}_3 \rightarrow A - \{\textup{id}\}$ for the projection map on the first coordinate. We say that a homomorphism $f: A \rightarrow \mathbb{F}_p$ is admissible if it vanishes on $p_1(\mathcal{I}_3''')$.
\end{mydef}

Define
\begin{align*}
\mathcal{U} := &\{(a, b, 0): a \in A - \{\textup{id}\}, b \in A, (a, b) \in \mathcal{J}' - \mathcal{J}''\} \cup \\
&\{(a, b, 0, c) : a \in A - \{\textup{id}\}, b \in A, c \in \mathbb{F}_p, (a, b) \in \mathcal{J}''\} \cup \\
&\{(a, b, 0, c, 0) : a \in A - \{\textup{id}\}, b \in A, c \in \mathbb{F}_p, (a, b) \not \in \mathcal{J}'\}
\end{align*} 
and define for each admissible homomorphism $f$
\begin{align*}
\mathcal{U}_f := &\{(a, b, 0): a \in A - \{\textup{id}\}, b \in A, (a, b) \in \mathcal{J}' - \mathcal{J}''\} \cup \\
&\{(a, b, 0, f(a)) : a \in A - \{\textup{id}\}, b \in A, (a, b) \in \mathcal{J}''\} \cup \\
&\{(a, b, x \cdot f, f(a), -x) : a \in A - \{\textup{id}\}, b \in A, x \in \mathbb{F}_p, (a, b) \not \in \mathcal{J}'\}.
\end{align*} 

\begin{theorem}
\label{tUnlinkedClassification}
Let $d := \dim_{\FF_p} A/pA \geq 1$. Let $U \subseteq \mathcal{I}_3$ be a maximal unlinked subset. Then we have that 
$$
U \in \{\mathcal{U}\} \cup \{\mathcal{U}_f : f \textup{ admissible}\}
$$
or there exists an element $u_0 \in U$ such that $\pi$ becomes an isomorphism when restricted to $U - \{u_0\}$ or the following conditions simultaneously hold true:
\begin{enumerate}
\item[$(C1)$] we have the bounds
$$
1 \leq \# \{(a, b, \chi, c, \nu) \in U : \chi \neq 0 \textup{ or } \nu \neq 0\} \leq 2d;
$$
\item[$(C2)$] the set $\{a \in A - \{\textup{id}\} : \exists b \in A \textup{ such that } (a, b) \in \mathcal{J}' - \mathcal{J}''\}$ does not generate $A$;
\item[$(C3)$] all $(a, b, \chi) \in U$ and all $(a, b, \chi, c) \in U$ satisfy $\chi = 0$; 
\item[$(C4)$] all elements $(a_1, b_1, 0, c_1), (a_1, b_1, 0, c_2) \in U$ satisfy $c_1 = c_2$. Furthermore, all elements $(a_1, b_1, 0, c_1, 0), (a_1, b_1, 0, c_2, 0) \in U$ satisfy $c_1 = c_2$.
\end{enumerate}
\end{theorem}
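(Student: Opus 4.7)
The plan is to exploit the unlinked equation pairwise to progressively constrain the $\chi$, $\nu$, and $c$ coordinates of elements of $U$. A central observation is that the unlinked equation is an equality in $\Q/\Z$ whose correction terms $c(u_2)\nu(u_1)$ already lie in the order-$p$ subgroup $\mathbb{F}_p \subseteq \Q/\Z$. This forces $\chi(u_1)(a(u_2)) \in \mathbb{F}_p$ for all pairs $u_1, u_2 \in U$; combined with the surjectivity $\pi(U) = (A - \{\textup{id}\}) \times A$ and the fact that $A - \{\textup{id}\}$ generates $A$ (using $d \geq 1$), every $\chi(u_1)$ must have image in $\mathbb{F}_p$ and in particular factor through $A/pA$. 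When $u_1 \in U \cap (\mathcal{I}_3'' \cup \mathcal{I}_3''')$ both indicator terms vanish, so $\chi(u_1)$ vanishes on a generating subset of $A$ and hence $\chi(u_1)=0$. This already yields condition $(C3)$.

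I would then split into two cases depending on whether some element of $U \cap \mathcal{I}_3'$ has $\nu \neq 0$. If every $u \in U \cap \mathcal{I}_3'$ satisfies $\nu(u) = 0$, the same argument applied to such $u$ (all indicator terms in the unlinked equation then carry the factor $\nu(u) = 0$) yields $\chi(u) = 0$. All characters and all $\nu$-coordinates vanish, every unlinked equation collapses to $0 = 0$, and maximality forces $U$ to contain every $v \in \mathcal{I}_3$ with $\chi(v) = 0$ (and $\nu(v) = 0$ when $v \in \mathcal{I}_3'$). Comparing with the definition gives $U = \mathcal{U}$.

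In the remaining case, fix $u_0 \in U \cap \mathcal{I}_3'$ with $\nu(u_0) \neq 0$ and define $f : A \to \mathbb{F}_p$ by $f(a) := -\chi(u_0)(a)/\nu(u_0)$, which is a well-defined homomorphism by the previous step. Applying unlinked-ness of $u_0$ against $u \in U \cap (\mathcal{I}_3' \cup \mathcal{I}_3'')$ gives $c(u) = f(a(u))$, while against $u \in U \cap \mathcal{I}_3'''$ it gives that $f$ vanishes on $p_1(U \cap \mathcal{I}_3''') = p_1(\mathcal{I}_3''')$, so $f$ is admissible. For other $u \in U \cap \mathcal{I}_3'$ with $\nu(u) \neq 0$, repeating the construction yields another admissible $f'$; comparison with $f$ on the generating set $a(U) = A - \{\textup{id}\}$ forces $f' = f$, whence $\chi(u) = -\nu(u) f$. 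For $u \in U \cap \mathcal{I}_3'$ with $\nu(u) = 0$, unlinked-ness with any $u' \in U$ directly gives $\chi(u) = 0$. Writing $x := -\nu(u)$, every element of $U$ now has the shape prescribed by $\mathcal{U}_f$, so $U \subseteq \mathcal{U}_f$. A direct substitution (the cancellation $x_1 f(a_2) + f(a_2)(-x_1) = 0$ is built into the sign convention) shows $\mathcal{U}_f$ is itself unlinked, so by maximality $U = \mathcal{U}_f$.

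The main subtlety is the meaning of the unlinked equation itself, which mixes $\Q/\Z$-valued characters with $\mathbb{F}_p$-valued $c, \nu$ via the identification $\mathbb{F}_p \hookrightarrow \Q/\Z$ sending $1 \mapsto 1/p$; this identification is what permits the division defining $f$ once one knows $\chi(u_0)$ lands in $\mathbb{F}_p$. A secondary and likely trickier obstacle is that if $u_0$ is the \emph{unique} element of $U \cap \mathcal{I}_3'$ with $\nu \neq 0$ sharing the coordinate $a = a(u_0)$, then the identity $c(u_0) = f(a(u_0))$ needed to place $u_0$ inside $\mathcal{U}_f$ is not directly forced by the unlinked equations; bridging this gap is what makes room for the theorem's two weaker disjuncts — the existence of a distinguished $u_0$ with $\pi$ bijective on $U - \{u_0\}$, and the conditions $(C1)$–$(C4)$ — as fall-back options. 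I expect the small-$U$ exceptional analysis to be the most delicate part of the proof, requiring a separate combinatorial argument that uses Lemmas \ref{lExtendingF} and \ref{lGlue} to construct $f$ from fragmentary data and the quantitative bound in $(C1)$ to control how far $U$ can deviate from a bijective section of $\pi$.
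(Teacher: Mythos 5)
Your strategy is structurally different from the paper's, and the core insight you use — that the unlinked relation forces $\chi(u_1)(a(u_2))$ to land in the order-$p$ subgroup of $\Q/\Z$, hence $\chi(u_1)$ itself factors through a homomorphism $A \to \FF_p$ — is correct and allows the pleasant shortcut of defining $f := -\chi(u_0)/\nu(u_0)$ directly. This is genuinely cleaner in spirit than the paper, which never invokes this observation and instead builds $f$ incrementally using Lemmas \ref{lExtendingF} and \ref{lGlue} in the case of many $\nu\neq 0$ elements (specifically, more than $2d$ of them). Your argument, as you observe, only needs two such elements to pin down $f$ uniquely, because the surjectivity of $\pi$ guarantees every $a \neq \textup{id}$ is hit at least $|A| \geq p \geq 3$ times so that $p_1(U - \{u_0, u_0'\})$ still generates $A$. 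Conversely, the paper's more convoluted case split is driven precisely by the need to dodge the well-definedness question for $f$ that your $\FF_p$-image observation resolves head-on.

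The gap you flag is real, but your guess about how the paper fills it is wrong: Lemmas \ref{lExtendingF} and \ref{lGlue} are used on the \emph{opposite} side of the dichotomy (the $> 2d$ case), which your method handles already. The residual case — exactly one element $u_0$ with nonzero $\nu$-coordinate (or, in the paper's coarser decomposition, between $1$ and $2d$ of them), where $c(u_0)$ is not forced to equal $f(a(u_0))$ by any unlinked equation — is handled by a short direct argument in the paper (Case~1). There, the further split is governed by $(C2)$, i.e.\ whether $p_1(\mathcal{I}_3''')$ generates $A$: if so, the character of every remaining $\mathcal{I}_3'$-element already vanishes and the outcome is $(A2)$ or $(A4)$ depending on the sign of $c(u_0)$; if not, the paper simply verifies $(C1)$--$(C4)$ directly. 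None of this uses the extension/gluing lemmas, and none of it needs the quantitative bound in $(C1)$ to ``control deviation from a bijective section'' — the bound is just the defining hypothesis of that case. So the proposal, while correct in its main thrust, is incomplete: the one subcase you explicitly leave open is precisely the one that produces the theorem's weaker disjuncts, and your sketch of how to close it points in the wrong direction.
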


\begin{proof}
It is readily verified that $\mathcal{U}$ and $\mathcal{U}_f$ are unlinked. Let $U$ be an arbitrary unlinked set. Then it suffices to prove that at least one of the following four alternatives hold:
\begin{itemize}
\item[$(A1)$] $U \subseteq \mathcal{U}$, or
\item[$(A2)$] $U \subseteq \mathcal{U}_f$ for some admissible $f$, or
\item[$(A3)$] the conditions $(C1)$, $(C2)$, $(C3)$, $(C4)$ all hold, or
\item[$(A4)$] there exists an element $u_0 \in U$ such that $\pi$ becomes an isomorphism when restricted to $U - \{u_0\}$.
\end{itemize}
To this end, let $u = (a, b, \chi) \in U$ or let $u = (a, b, \chi, c) \in U$ or let $u = (a, b, \chi, c, 0) \in U$. We are first going to show that $\chi = 0$. Let $a' \in A - \{\text{id}\}$ be arbitrary and choose any $b' \in A - \{b\}$. Since $\pi: U \rightarrow (A - \{\text{id}\}) \times A$ is surjective, we may find some element $v \in U$ with $\pi(v) = (a', b')$. By our choice of $b'$, we see that $u \neq v$. Because $u$ and $v$ are unlinked, we deduce that
$$
\chi(a') = 0.
$$
Since $\chi(a') = 0$ holds for every $a' \in A - \{\text{id}\}$, this forces $\chi = 0$, and hence
\begin{equation}
\label{eEasyU}
U \cap \mathcal{I}_3''' \subseteq \{(a, b, 0): a \in A - \{\textup{id}\}, b \in A, (a, b) \in \mathcal{J}' - \mathcal{J}''\}
\end{equation}
and
\begin{equation}
\label{eEasyU2}
U \cap \mathcal{I}_3'' \subseteq \{(a, b, 0, c): a \in A - \{\textup{id}\}, b \in A, c \in \mathbb{F}_p, (a, b) \in \mathcal{J}''\}
\end{equation}
and
\begin{equation}
\label{eGeneralClaim}
(a, b, \chi, c, 0) \in U \Longrightarrow \chi = 0.
\end{equation}
We will now distinguish two cases.

\paragraph{Case 1.} First suppose that 
$$
\# \{(a, b, \chi, c, \nu) \in U : \nu \neq 0\} \leq 2d.
$$
By equation \eqref{eGeneralClaim}, it then follows that
$$
\# \{(a, b, \chi, c, \nu) \in U : \nu \neq 0 \text{ or } \chi \neq 0\} = \# \{(a, b, \chi, c, \nu) \in U : \nu \neq 0\} \leq 2d.
$$
If $\# \{(a, b, \chi, c, \nu) \in U : \nu \neq 0 \text{ or } \chi \neq 0\} = 0$, then, drawing upon our previous results \eqref{eEasyU} and \eqref{eEasyU2}, we conclude that $U \subseteq \mathcal{U}$, so $(A1)$ holds. Thus, it remains to handle the case
\begin{equation}
\label{ePariah}
1 \leq \# \{(a, b, \chi, c, \nu) \in U : \nu \neq 0 \text{ or } \chi \neq 0\} \leq 2d.
\end{equation}
Note that this is precisely the condition $(C1)$. We will now distinguish two subcases, namely whether $(C2)$ holds or not.

\paragraph{Subcase A: the condition $(C2)$ holds.} We will show that $(A3)$ holds in this case. Note that we already know that $(C1)$ and $(C2)$ are true, so it remains to prove that $(C3)$ and $(C4)$ hold. By equations \eqref{eEasyU} and \eqref{eEasyU2} again, we have that $(a, b, \chi) \in U$ and $(a, b, \chi, c) \in U$ implies $\chi = 0$, thus $(C3)$ holds. 

So it remains to prove that all $u_1 = (a_1, b_1, 0, c_1) \in U$ and $u_2 = (a_1, b_1, 0, c_2) \in U$ satisfy $c_1 = c_2$ (and similarly all $u_1 = (a_1, b_1, 0, c_1, 0) \in U$ and $u_2 = (a_1, b_1, 0, c_2, 0) \in U$ satisfy $c_1 = c_2$). To this end, fix some $v := (a, b, \chi, c, \nu) \in U$ with $\nu \neq 0$, which exists by equation \eqref{ePariah}. Then using that $v$ is unlinked with $u_1$ and $u_2$, we obtain
$$
\chi(a_1) + \nu c_1 = \chi(a_1) + \nu c_2 = 0.
$$
Since $\nu \neq 0$, this implies $c_1 = c_2$, and therefore $(C4)$ holds.

\paragraph{Subcase B: the condition $(C2)$ fails.} We will now show that either $(A2)$ or $(A4)$ is true. Using the negation of $(C2)$, it is not difficult to see that all $(a, b, \chi, c, \nu) \in U$ satisfy $\chi = 0$. By equation \eqref{ePariah} and by what we have just shown, there exists some $u_0 = (a, b, 0, c, \nu) \in U$ with $\nu \neq 0$. Recalling that we have already shown the inclusions \eqref{eEasyU} and \eqref{eEasyU2}, we use that $u_0$ is unlinked with every $u \in U - \{u_0\}$ to deduce that the $c$-coordinate of $u$ must also be zero. 

If the $c$-coordinate of $u_0$ is zero as well, then we have $U \subseteq \mathcal{U}_0$ (with $0$ denoting the zero homomorphism, which is always admissible), and thus $(A2)$ is true. If the $c$-coordinate of $u_0$ is nonzero, then it follows from the unlinked assumption that the $\nu$-coordinate of every $u \in U- \{u_0\}$ must also be zero. We conclude that $(A4)$ holds.

\paragraph{Case 2.} Next suppose that 
\begin{equation}
\label{eNonPariahCase}
\# \{(a, b, \chi, c, \nu) \in U : \nu \neq 0\} > 2d.
\end{equation}
We will ultimately show that $(A2)$ holds. In order to do so, we will proceed by making a sequence of claims.

\paragraph{Claim 1 and its consequences.} We claim that for all $u_1 = (a_1, b_1, 0, c_1) \in U$ and $u_2 = (a_1, b_2, 0, c_2) \in U$, we have $c_1 = c_2$ (exactly as in Subcase A). In order to prove this claim, we first fix some $u_{\text{par}, 1} = (a, b, \chi, c, \nu) \in U$ with $\nu \neq 0$, which exists by our assumption \eqref{eNonPariahCase}. Using that $u_{\text{par}, 1}$ is unlinked with both $u_1$ and $u_2$, we obtain
$$
\chi(a_1) + c_1 \nu = \chi(a_1) + c_2 \nu = 0,
$$
which readily gives the claim. 

From Claim 1 and \eqref{eEasyU2}, we deduce that there exists a function $f_0: p_1(\mathcal{I}_3'') \rightarrow \mathbb{F}_p$ such that
\begin{equation}
\label{eUf0}
U \cap \mathcal{I}_3'' \subseteq \{(a, b, 0, f_0(a)) : a \in A - \{\text{id}\}, b \in A\}.
\end{equation}
Observe that $p_1(\mathcal{I}_3'') = p_1(U \cap \mathcal{I}_3'')$ by surjectivity of $\pi$, which we shall use repeatedly. Using this observation and using $u_{\text{par}, 1}$ one more time, we have for all $a \in p_1(\mathcal{I}_3'')$
$$
\chi(a) + f_0(a) \nu = 0.
$$
Taking $\Z$-linear combinations of the above identity and using that $\nu \neq 0$, we conclude that for all $a_1, \dots, a_m \in p_1(\mathcal{I}_3'')$ and all $k_1, \dots, k_m \in \Z$ with $k_1 a_1 + \dots + k_m a_m = 0$
\begin{equation}
\label{eKeyFact}
\sum_{i = 1}^m k_i f_0(a_i) = 0.
\end{equation}
Define $A''$ to be the subgroup generated by $p_1(\mathcal{I}_3'')$. It follows from equation \eqref{eKeyFact} and Lemma \ref{lExtendingF} that there exists a homomorphism $f: A'' \rightarrow \mathbb{F}_p$ such that $f = f_0$ on $p_1(\mathcal{I}_3'')$. In particular, we conclude that equation \eqref{eUf0} may be strengthened to
$$
U \cap \mathcal{I}_3'' \subseteq \{(a, b, 0, f(a)) : a \in A - \{\text{id}\}, b \in A\}.
$$

\paragraph{Claim 2.} We next claim that for all $u_1 = (a_1, b_1, \chi_1, c_1, \nu_1) \in U$ and $u_2 = (a_1, b_2, \chi_2, c_2, \nu_2) \in U$, we have $c_1 = c_2$. In order to prove this claim, we further fix some distinct elements (and distinct from $u_{\text{par}, 1}$)
\begin{align*}
&u_{\text{par}, 2} = (a', b', \chi', c', \nu') \in U \\
&u_{\text{par}, 3} = (a'', b'', \chi'', c'', \nu'') \in U
\end{align*}
with $\nu', \nu''$ both nonzero; such elements exist by our assumption \eqref{eNonPariahCase}. Now given $u_1$ and $u_2$, we fix some $j \in \{1, 2, 3\}$ such that $u_{\text{par}, j} \not \in \{u_1, u_2\}$. Using that $u_{\text{par}, j}$ is unlinked with both $u_1$ and $u_2$, we obtain the claim. Hence there exists a function $g_0: p_1(\mathcal{I}_3') \rightarrow \mathbb{F}_p$ such that
\begin{equation}
\label{eUg0}
U \cap \mathcal{I}_3' \subseteq \{(a, b, \chi, g_0(a), \nu) : a \in A - \{\text{id}\}, b \in A, \chi \in A^\vee, \nu \in \mathbb{F}_p\}.
\end{equation}

\paragraph{Claim 3.} Define $A'$ to be the subgroup generated by $p_1(\mathcal{I}_3')$. We claim that there exists a homomorphism $g: A' \rightarrow \mathbb{F}_p$ with the property that $g = g_0$ on $p_1(\mathcal{I}_3')$. To this end, we extract a minimal generating set $S$ of $A'$ from $p_1(\mathcal{I}_3')$. Note that $|S| \leq d$. Now let $a_1, \dots, a_m \in S$ be distinct, so $m \leq d$. Fix elements $\alpha_1, \dots, \alpha_m \in U \cap \mathcal{I}_3'$ such that $p_1(\alpha_i) = a_i$. Then it follows from equation \eqref{eNonPariahCase} that there exists some $u_{\text{par}} = (a, b, \chi, c, \nu) \not \in \{\alpha_1, \dots, \alpha_m\}$ with $\nu \neq 0$. Using that $u_{\text{par}}$ is unlinked with $\alpha_i$, we deduce that
$$
\chi(a_i) + g_0(a_i) \nu = 0.
$$
This implies that for all distinct $a_1, \dots, a_m \in S$ and all $k_1, \dots, k_m \in \Z$ with $k_1 a_1 + \dots + k_m a_m = 0$ we have
\begin{equation}
\label{eLinearCombination}
\sum_{i = 1}^m k_i g_0(a_i) = 0.
\end{equation}
This gives the existence of our desired $g$ thanks to Lemma \ref{lExtendingF} (note that we have only covered the case that $a_1, \dots, a_m \in S$ are distinct, but this easily implies equation \eqref{eLinearCombination} also when the $a_i$ are not distinct). Hence equation \eqref{eUg0} becomes
$$
U \cap \mathcal{I}_3' \subseteq \{(a, b, \chi, g(a), \nu) : a \in A - \{\text{id}\}, b \in A, \chi \in A^\vee, \nu \in \mathbb{F}_p\}.
$$

\paragraph{Claim 4.} We extract minimal generating sets $S$ and $T$ for respectively $\langle p_1(\mathcal{I}_3') \rangle$ and $\langle p_1(\mathcal{I}_3'') \rangle$. Then we claim that for all distinct $a_1, \dots, a_m \in S$, all $k_1, \dots, k_m \in \Z$, all $b_1, \dots, b_n \in T$ and all $l_1, \dots, l_n \in \Z$ with $k_1 a_1 + \dots + k_m a_m = l_1 b_1 + \cdots + l_n b_n$, we have
$$
\sum_{i = 1}^m k_i f(a_i) = \sum_{j = 1}^n l_j g(b_j).
$$
Fix elements $\alpha_1, \dots, \alpha_m \in U \cap \mathcal{I}_3'$ and $\beta_1, \dots, \beta_n \in U \cap \mathcal{I}_3''$ projecting to respectively $a_1, \dots, a_m$ and $b_1, \dots, b_n$. Since $|S| \leq d$ and $|T| \leq d$, it follows from equation \eqref{eNonPariahCase} that there exists some $u_{\text{par}} = (a, b, \chi, c, \nu) \in U - \{\alpha_1, \dots, \alpha_m, \beta_1, \dots, \beta_n\}$ with $\nu \neq 0$. Using that $u_{\text{par}}$ is unlinked with $\alpha_i$ respectively $\beta_j$, we get the two equations
$$
\chi(a_i) + f(a_i) \nu = 0, \quad \quad \chi(b_j) + g(b_j) \nu = 0.
$$
Then the claim follows upon taking $\Z$-linear combinations of the above identities and using that $\nu \neq 0$.

\paragraph{End of proof.} From Claim 4 and Lemma \ref{lGlue}, it follows that $f$ and $g$ glue to a homomorphism $h: A' + A'' \rightarrow \mathbb{F}_p$, thus we have
\begin{align}
\label{eUh}
&U \cap \mathcal{I}_3'' \subseteq \{(a, b, 0, h(a)) : a \in A - \{\text{id}\}, b \in A\}. \nonumber \\
&U \cap \mathcal{I}_3' \subseteq \{(a, b, \chi, h(a), \nu) : a \in A - \{\text{id}\}, b \in A, \chi \in A^\vee, \nu \in \mathbb{F}_p\}. 
\end{align}
Now fix some $u = (a, b, \chi, h(a), \nu) \in U$ again. If $a \in p_1(\mathcal{I}_3''')$, then it follows that $\chi(a) = 0$ by using that $u$ is unlinked with any $v \in U$ satisfying $p_1(v) = a$. Using equation \eqref{eUh}, we also obtain that $\chi(a) = 0$ if $a \in \ker(h)$. We conclude that $\chi$ vanishes on $p_1(\mathcal{I}_3''')$ and $\ker(h)$. 


There are now two final cases to consider. Firstly, suppose that $p_1(\mathcal{I}_3''')$ and $\ker(h)$ span $A$. Then $(a, b, \chi, h(a), \nu) \in U$ implies $\chi = 0$. In that case we use equation \eqref{eNonPariahCase} and the unlinked property one final time to conclude that $h$ is the trivial character. Writing $\mathcal{U}_0$ for the set $\mathcal{U}_f$ with $f = 0$ (which is admissible), we conclude that $U \subseteq \mathcal{U}_0$, 

Secondly, suppose that $p_1(\mathcal{I}_3''')$ and $\ker(h)$ do not span $A$. Since $p_1(\mathcal{I}_3''')$ contains every element in $A - (A' + A'')$ and $A' + A''$ is a subgroup, this forces $A' + A'' = A$. Moreover, since $\ker(h)$ is an index $p$ subgroup of $A' + A'' = A$, we deduce that $p_1(\mathcal{I}_3''') \subseteq \ker(h)$, and therefore $h$ is admissible. Recalling that $\chi$ vanishes on $\ker(h)$, we see that $\chi = x \cdot h$ for some $x \in \mathbb{F}_p$. Hence
\begin{align*}
&U \cap \mathcal{I}_3'' \subseteq \{(a, b, 0, h(a)) : a \in A - \{\text{id}\}, b \in A\}. \nonumber \\
&U \cap \mathcal{I}_3' \subseteq \{(a, b, x \cdot h, h(a), \nu) : a \in A - \{\text{id}\}, b \in A, x \in \mathbb{F}_p\}. 
\end{align*}
As the final step of the proof, we will now show that for all $u := (a, b, x \cdot h, h(a), \nu) \in U$, we have $\nu = -x$. So let $(a, b, x \cdot h, h(a), \nu) \in U$. Since $\ker(h)$ is an index $p > 2$ subgroup of $A$, we may fix some $a' \in A - \{\text{id}\}$ with $h(a') \neq 0$ and $a' \neq a$. Then we see that $a' \not \in p_1(\mathcal{I}_3''')$ (as otherwise $h(a') = 0$ by the inclusion $p_1(\mathcal{I}_3''') \subseteq \ker(h)$ proven just above). Therefore, by surjectivity of $\pi$, there exist $v := (a', b', 0, h(a')) \in U$ or $v := (a', b', x' \cdot h, h(a'), \nu') \in U$. Using that $u$ is unlinked with $v$, we obtain that $\nu = -x$. Therefore $(A2)$ holds, as desired.
\end{proof}

\section{Oscillation results}
\label{sAnalytic}
\subsection{Large sieve for Dirichlet characters}
We start with a large sieve result. Conveniently, in our application the relevant variables will \emph{always} be coprime. Thus, our large sieve statement involves ordinary Dirichlet characters as opposed to modified Dirichlet characters. We define for a squarefree integer $a$ divisible exclusively by primes $1$ modulo $p^n$ the Dirichlet character
$$
\chi_{a, n} := \prod_{q \mid a} \chi_{q, n}.
$$

\begin{lemma}
\label{lLargeSieve}
Let $p > 2$ be a prime number and let $k_1, k_2 \in \Z_{\geq 1}$. Then there are $C, \delta > 0$ such that for all $(c_1, c_2) \in \Z/p^{k_1}\Z \times \Z/p^{k_2} \Z$ with $(c_1, c_2) \neq (0, 0)$, all $X \geq 100$, all $Y \geq 100$ and all complex coefficients $\beta_b$ with $|\beta_b| \leq 1$
\begin{multline*}
\sum_{\substack{a \leq X \\ q \mid a \Rightarrow q \equiv 1 \bmod p^{k_1}}} \mu^2(a) \left| \sum_{\substack{b \leq Y \\ q \mid b \Rightarrow q \equiv 1 \bmod p^{k_2}}} \beta_b \mu^2(ab) \chi_{a, k_1}^{c_1}(b) \chi_{b, k_2}^{c_2}(a) \right| \\
\leq CXY (X^{-\delta} + Y^{-\delta}) (\log XY)^C.
\end{multline*}
\end{lemma}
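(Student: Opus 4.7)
The plan is to establish this bilinear estimate via a standard large sieve argument, modeled on Heath--Brown's approach to higher power residue symbols. I would first apply Cauchy--Schwarz to the outer sum over $a$, using that the squarefree support in the relevant arithmetic progression has size $O(X)$, to reduce matters to controlling the second moment
$$\mathcal{M} := \sum_{\substack{a \leq X \\ q \mid a \Rightarrow q \equiv 1 \bmod p^{k_1}}} \mu^2(a) \,\Bigl|\, \sum_{\substack{b \leq Y \\ q \mid b \Rightarrow q \equiv 1 \bmod p^{k_2}}} \beta_b\, \mu^2(ab)\, \chi_{a, k_1}^{c_1}(b)\, \chi_{b, k_2}^{c_2}(a) \,\Bigr|^{2}.$$
Crucially, the kernel $\chi_{a, k_1}^{c_1}(b)\,\chi_{b, k_2}^{c_2}(a)$ is symmetric under swapping $(a, k_1, c_1) \leftrightarrow (b, k_2, c_2)$, so Cauchy--Schwarz performed with the $b$-sum on the outside yields a dual bound, and taking the better of the two at the end produces the desired $\delta = 1/2$ saving via the elementary inequality $\min(XY^{1/2}, X^{1/2}Y) \leq XY(X^{-1/2} + Y^{-1/2})$.

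Next, I would expand the square in $\mathcal{M}$, interchange summations, and reduce to estimating, for each pair $(b_1, b_2)$,
$$T(b_1, b_2) := \sum_{\substack{a \leq X \\ \gcd(a, b_1 b_2) = 1 \\ q \mid a \Rightarrow q \equiv 1 \bmod p^{k_1}}} \mu^2(a)\, \chi_{a, k_1}^{c_1}(b_1)\,\overline{\chi_{a, k_1}^{c_1}(b_2)}\, \chi_{b_1, k_2}^{c_2}(a)\, \overline{\chi_{b_2, k_2}^{c_2}(a)}.$$
The key tool is $p^{k}$-th power reciprocity: because $p$ is odd and every prime factor of $a, b_1, b_2$ is either $p$ or $\equiv 1 \bmod p$, reciprocity converts each factor $\chi_{q, k_1}^{c_1}(b_i)$ into $\chi_{b_i, k_1}^{c_1}(q)$ modulo a Hilbert-symbol correction depending only on the classes of $a, b_i$ modulo a fixed power $p^N$. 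Lifting both characters in $a$ to a common order $p^{\max(k_1, k_2)}$, the integrand becomes a genuine Dirichlet character $\psi_{b_1, b_2}(a)$ of conductor dividing $p^{N} \cdot b_1 b_2$, multiplied by a bounded correction. On the diagonal $b_1 = b_2$ one has $T \ll X$, contributing $O(XY)$ in total. Off-diagonal, provided the lifted exponent $\tilde c := c_1 p^{\max - k_1} + c_2 p^{\max - k_2}$ is nonzero modulo $p^{\max}$, the character $\psi_{b_1, b_2}$ is non-principal whenever $b_1 \neq b_2$; P\'olya--Vinogradov then gives $T(b_1, b_2) \ll (b_1 b_2)^{1/2} (\log XY)^{O(1)}$, so summation over $b_1, b_2 \leq Y$ yields an off-diagonal contribution of $O(Y^{3}(\log XY)^{O(1)})$. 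Consequently $\mathcal{M} \ll (XY + Y^{3})(\log XY)^{C}$, and by symmetry also $\mathcal{M}' \ll (XY + X^{3})(\log XY)^{C}$, giving the claim.

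The main obstacle is the degenerate regime $\tilde c \equiv 0 \bmod p^{\max(k_1, k_2)}$, which can arise even with $(c_1, c_2) \neq (0, 0)$ (e.g.~$k_1 = k_2 = 1$, $c_1 = 1$, $c_2 = p - 1$). In this case the principal character cancels and $\Theta_a$ reduces to a product of Hilbert-symbol corrections depending on $a, b_1, b_2$ modulo a small power of $p$. To extract cancellation here, I would exploit that $a$ ranges over squarefree integers with all prime factors in a prescribed residue class modulo $p^{k_1}$ and that the correction, viewed as a multiplicative function of $a$, factors as a Dirichlet character in $a$ modulo $p^{N}$ whose variation with $(b_1, b_2)$ modulo $p^{O(1)}$ reintroduces nontrivial oscillation after averaging; P\'olya--Vinogradov then applies and produces the same off-diagonal bound. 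The remaining technicalities --- removing the coprimality constraint $\mu^{2}(ab)$ via M\"obius inversion $\mathbf{1}_{\gcd(a, b_1 b_2) = 1} = \sum_{d \mid \gcd(a, b_1 b_2)} \mu(d)$, removing the squarefree condition $\mu^{2}(a) = \sum_{d^{2} \mid a} \mu(d)$, and detecting the residue class conditions on prime factors via Dirichlet characters modulo $p^{\max(k_1, k_2)}$ --- are standard and cost only logarithmic factors, which are absorbed into the $(\log XY)^{C}$ term.
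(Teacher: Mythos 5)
There is a genuine gap: the Cauchy--Schwarz plus P\'olya--Vinogradov strategy does not achieve the required power saving in the balanced range $X \approx Y$, and this is not a technicality but a fundamental shortcoming of the approach. After Cauchy--Schwarz on the $a$-sum, your bound on the original sum is $(X\mathcal{M})^{1/2}$ with $\mathcal{M} \ll (XY + Y^3)(\log XY)^{O(1)}$, giving $XY^{1/2} + X^{1/2}Y^{3/2}$ up to logarithms; the dual version gives $X^{1/2}Y + X^{3/2}Y^{1/2}$. When $X=Y$, both quantities are $\approx X^2 = XY$, i.e.\ you only recover the trivial bound, whereas the lemma demands $XY(X^{-\delta}+Y^{-\delta}) \approx X^{2-\delta}$. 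The culprit is that P\'olya--Vinogradov applied to the conductor-$\approx b_1b_2$ character in the $a$-variable gives $(b_1b_2)^{1/2}$, which is no better than the trivial bound $X$ precisely when $b_1b_2 \gtrsim X^2$, i.e.\ for most off-diagonal pairs when $Y \gtrsim X$. The elementary inequality $\min(XY^{1/2},X^{1/2}Y) \leq XY(X^{-1/2}+Y^{-1/2})$ that you invoke at the end silently drops the dominant off-diagonal terms $X^{1/2}Y^{3/2}$ and $X^{3/2}Y^{1/2}$; these are not controlled by $XY(X^{-\delta}+Y^{-\delta})$ for any $\delta>0$ in the balanced regime.

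What the situation actually requires is a genuine large sieve inequality exploiting duality and orthogonality among the family of power residue characters simultaneously (the Heath-Brown quadratic/cubic large sieve philosophy), not a pointwise conductor bound summed over pairs. This is exactly what the paper does: it lifts the sum to prime ideals of $\Q(\zeta_{p^{k_1}})$ and $\Q(\zeta_{p^{k_2}})$, expresses the kernel as a bilinear form in power residue symbols, verifies that this is an ``oscillating bilinear character'' in the sense of Santens, and then invokes Santens' bilinear large sieve \cite[Proposition 4.21]{Santens} as a black box; that result already contains the delicate balanced-range analysis (and, incidentally, internalizes the reciprocity and Hilbert-symbol bookkeeping you attempt by hand, as well as the degenerate case where your lifted exponent $\tilde{c}$ vanishes). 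To salvage your approach you would need to replace the P\'olya--Vinogradov step with a Heath-Brown-style large sieve for the relevant family of characters, at which point you would essentially be reproving the cited result.
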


\begin{proof}
Our goal will be to apply \cite[Proposition 4.21]{Santens}. In the notation of that proposition, we take $K = \Q(\zeta_{p^{k_1}})$ and $L = \Q(\zeta_{p^{k_2}})$. We let $U$ be the downward-closed set (see \cite[Definition 4.14]{Santens}) consisting of the ideals in $(I, J) \in I_K \times I_L$ of norm bounded by respectively $X$ and $Y$. We take
$$
\alpha(\mathfrak{m}, \mathfrak{n}) = \left(\frac{N_{L/\Q}(\mathfrak{n})}{\mathfrak{m}}\right)_{\Q(\zeta_{p^{k_1}}), p^{k_1}}^{c_1} \left(\frac{N_{K/\Q}(\mathfrak{m})}{\mathfrak{n}}\right)_{\Q(\zeta_{p^{k_2}}), p^{k_2}}^{c_2},
$$
where $(\cdot/\cdot)_{M, n}$ denotes the $n$-th power residue symbol in a number field $M$ (which only makes sense if $M$ contains the $n$-th roots of unity). We take $q = p^3$ and we take $A$ a large number in terms of $p$, $k_1$ and $k_2$ only. It is then readily verified that $\alpha(\mathfrak{m}, \mathfrak{n})$ is an $(A, q)$-oscillating bilinear character in the sense of \cite[Definition 4.19]{Santens}. Now \cite[Proposition 4.21]{Santens} provides us with $C, \delta > 0$ and the estimate
\begin{equation}
\label{eSantensLS}
\left| \sum_{N_{K/\Q}(\mathfrak{m}) \leq X} \sum_{N_{L/\Q}(\mathfrak{n}) \leq Y} a_{\mathfrak{m}} b_{\mathfrak{n}} \alpha(\mathfrak{m}, \mathfrak{n}) \right| \leq CXY (X^{-\delta} + Y^{-\delta}) (\log XY)^C
\end{equation}
valid for all complex numbers $a_{\mathfrak{m}}$ and $b_{\mathfrak{n}}$ of magnitude at most $1$.

We claim that, for a good choice of $a_{\mathfrak{m}}$ and $b_{\mathfrak{n}}$, equation \eqref{eSantensLS} becomes exactly the stated bound in our lemma. First, we observe that by the standard conventions for Dirichlet character, we may replace $\mu^2(ab)$ by $\mu^2(b)$. Hence, for a good choice of $\beta_a'$, we have
\begin{multline*}
\sum_{\substack{a \leq X \\ q \mid a \Rightarrow q \equiv 1 \bmod p^{k_1}}} \mu^2(a) \left| \sum_{\substack{b \leq Y \\ q \mid b \Rightarrow q \equiv 1 \bmod p^{k_2}}} \beta_b \mu^2(ab) \chi_{a, k_1}^{c_1}(b) \chi_{b, k_2}^{c_2}(a) \right| = \\ \sum_{\substack{a \leq X \\ q \mid a \Rightarrow q \equiv 1 \bmod p^{k_1}}} \sum_{\substack{b \leq Y \\ q \mid b \Rightarrow q \equiv 1 \bmod p^{k_2}}} \mu^2(a) \mu^2(b) \beta_a' \beta_b \chi_{a, k_1}^{c_1}(b) \chi_{b, k_2}^{c_2}(a).
\end{multline*}
Now, to relate the above sum with \eqref{eSantensLS}, we observe that for every prime $q \equiv 1 \bmod p^k$, there exists precisely one prime ideal $\mathfrak{q}$ of $\Z[\zeta_{p^k}]$ above $q$ such that
$$
\chi_{q, k}(a) = \left(\frac{a}{\mathfrak{q}}\right)_{\Q(\zeta_{p^k}), p^k}.
$$
We will call this ideal $\mathrm{Up}_k(q)$, and we define $\mathrm{Up}_k(n) = \prod_{q \mid n} \mathrm{Up}_k(q)$ for all squarefree integers $n$ exclusively divisible by primes $1$ modulo $p^k$. Hence we have
$$
\sum_{\substack{a \leq X \\ q \mid a \Rightarrow q \equiv 1 \bmod p^{k_1}}} \sum_{\substack{b \leq Y \\ q \mid b \Rightarrow q \equiv 1 \bmod p^{k_2}}} \hspace{-0.5cm} \mu^2(a) \mu^2(b) \beta_a' \beta_b \chi_{a, k_1}^{c_1}(b) \chi_{b, k_2}^{c_2}(a) = 
\sum_{N_{K/\Q}(\mathfrak{m}) \leq X} \sum_{N_{L/\Q}(\mathfrak{n}) \leq Y} a_{\mathfrak{m}} b_{\mathfrak{n}} \alpha(\mathfrak{m}, \mathfrak{n}),
$$
where we have chosen 
$$
a_{\mathfrak{m}} =
\begin{cases}
\beta_{N_{K/\Q}(\mathfrak{m})}' &\text{if } \mathfrak{m} \in \im(\text{Up}_{k_1}) \\
0 &\text{if } \mathfrak{m} \not \in \im(\text{Up}_{k_1})
\end{cases}
, \quad \quad \quad
b_{\mathfrak{n}} = 
\begin{cases}
\beta_{N_{L/\Q}(\mathfrak{n})} &\text{if } \mathfrak{n} \in \im(\text{Up}_{k_2}) \\
0 &\text{if } \mathfrak{n} \not \in \im(\text{Up}_{k_2})
\end{cases}
.
$$
This gives the lemma.
\end{proof}

We shall later need to apply Lemma \ref{lLargeSieve} in hyperbolic regions instead of boxes. Standard arguments allow one to cover such a hyperbolic region by boxes, so we may state the following corollary.

\begin{corollary}
\label{cLargeSieve}
Let $r \geq 2$ be an integer. Let $p > 2$ be a prime number and let $k_1, k_2 \in \Z_{\geq 1}$. Then there are $C, \delta > 0$ such that the following holds. Let $(c_1, c_2) \in \Z/p^{k_1}\Z \times \Z/p^{k_2} \Z$ with $(c_1, c_2) \neq (0, 0)$. Let $\alpha, \beta: \Z_{\geq 1}^{r - 1} \rightarrow \mathbb{C}$ satisfy $|\alpha(\mathbf{n})|, |\beta(\mathbf{n})| \leq 1$ and assume that $\alpha$ and $\beta$ are supported on squarefree integers. Then we have for all $X, z \geq 100$
\begin{multline*}
\left|\sum_{\substack{\mathbf{m} \in \Z_{\geq 1}^r, \ m_1 \cdots m_r \leq X, \ m_1, m_2 > z \\ q \mid m_1 \Rightarrow q \equiv 1 \bmod p^{k_1} \\ q \mid m_2 \Rightarrow q \equiv 1 \bmod p^{k_2}}} \chi_{m_1, k_1}^{c_1}(m_2) \chi_{m_2, k_2}^{c_2}(m_1) \alpha(m_1, m_3, \ldots, m_r) \beta(m_2, m_3, \ldots, m_r) \right| \\
\leq \frac{CX (\log X)^C}{z^\delta}.
\end{multline*}
\end{corollary}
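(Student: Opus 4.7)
The plan is to reduce the hyperbolic character sum to box sums via dyadic decomposition and then apply Lemma \ref{lLargeSieve} to each box. First, I would fix the variables $m_3, \ldots, m_r$, set $Y := X/(m_3 \cdots m_r)$, and denote $\alpha'(m_1) := \alpha(m_1, m_3, \ldots, m_r)$, $\beta'(m_2) := \beta(m_2, m_3, \ldots, m_r)$. This reduces the problem to bounding, for each fixed $(m_3, \ldots, m_r)$, the double sum
\[
S_Y := \sum_{\substack{m_1 m_2 \leq Y,\ m_1, m_2 > z \\ q \mid m_i \Rightarrow q \equiv 1 \bmod p^{k_i}}} \alpha'(m_1) \beta'(m_2) \chi_{m_1, k_1}^{c_1}(m_2) \chi_{m_2, k_2}^{c_2}(m_1).
\]
I would then decompose $(m_1, m_2)$ dyadically: for dyadic integers $M_1, M_2 \geq z/2$, the contribution from $m_i \in [M_i, 2M_i)$ is called \emph{interior} if $4 M_1 M_2 \leq Y$ (so that the constraint $m_1 m_2 \leq Y$ holds automatically on the whole box) and \emph{boundary} if $M_1 M_2 \leq Y < 4 M_1 M_2$; there are only $O(\log Y)$ boundary pairs.

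For each interior box I would use $|\alpha'(m_1)| \leq 1$ to absorb $\alpha'$ into the outer sum of absolute values and apply Lemma \ref{lLargeSieve} directly, yielding $\ll M_1 M_2 (M_1^{-\delta_0} + M_2^{-\delta_0}) (\log X)^{C_0} \ll M_1 M_2 z^{-\delta_0} (\log X)^{C_0}$ since $M_1, M_2 \geq z/2$. Summing the dyadic geometric series produces $\sum_{M_1, M_2} M_1 M_2 \ll Y \log Y$, giving total interior contribution $\ll Y z^{-\delta_0} (\log X)^{C_0 + 1}$. For a boundary box the cutoff $m_1 m_2 \leq Y$ couples the variables, and I would decouple it via Perron's formula
\[
\mathbf{1}[m_1 m_2 \leq Y] = \frac{1}{2 \pi i} \int_{1 - iT}^{1 + iT} \left(\frac{Y}{m_1 m_2}\right)^s \frac{ds}{s} + \textup{error},
\]
taking $T := X^{10}$. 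On the contour $\Re s = 1$, triangle inequality absorbs the rescaled coefficients $\alpha'(m_1)/m_1^s$ and $\beta'(m_2)/m_2^s$ (of magnitudes $\leq 1/M_1$ and $\leq 1/M_2$ on the box), and Lemma \ref{lLargeSieve} applied to the rescaled box sum yields an inner bound $\ll z^{-\delta_0} (\log X)^{C_0}$. Multiplying by $|Y^s| = Y$ and integrating $1/|s|$ across the contour then gives $\ll Y z^{-\delta_0} (\log X)^{C_0 + 1}$ per boundary box (the extra $\log$ coming from $\log T \ll \log X$); the Perron truncation error is controlled by standard estimates and is negligible for our choice of $T$.

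Combining the interior and boundary contributions, the total for fixed $(m_3, \ldots, m_r)$ is $\ll Y z^{-\delta_0} (\log X)^{C'}$. Finally, summing over tuples $(m_3, \ldots, m_r)$ with $m_3 \cdots m_r \leq X/z^2$, the elementary bound $\sum_{m_3 \cdots m_r \leq Z} 1/(m_3 \cdots m_r) \ll (\log Z)^{r - 2}$ yields the claimed estimate $\ll X z^{-\delta_0} (\log X)^{C''}$; taking $\delta := \delta_0$ and $C$ large enough completes the proof. The main technical subtlety is the Perron analysis of the boundary boxes: the truncation height $T$, the contour of integration, and the Perron error terms must be chosen in tandem so that both the savings factor $z^{-\delta}$ and only a bounded power of $\log X$ are preserved. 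I do not expect any new ideas beyond standard contour manipulation, but this is the step that requires the most careful bookkeeping.
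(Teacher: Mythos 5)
Your proof is correct and is exactly the ``standard argument'' the paper has in mind: the authors simply cite \cite[Corollary~4.4]{KPS} for this reduction, and the mechanism there is the same dyadic decomposition of the hyperbolic region into boxes followed by Perron (or Mellin) decoupling of the cutoff on the $O(\log X)$ boundary boxes. A few small points worth flagging. First, when you apply Lemma~\ref{lLargeSieve}, the $\mu^2(a)$ and $\mu^2(ab)$ factors appearing there are supplied automatically by the hypotheses of the corollary: $\alpha$ and $\beta$ are supported on squarefree tuples, and the Dirichlet characters $\chi_{m_1,k_1}(m_2)$ vanish whenever $\gcd(m_1,m_2)>1$, so the sum is already restricted to squarefree coprime pairs; this should be made explicit. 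Second, in the Perron step you should shift the cutoff to a non-integer (e.g.~replace $Y$ by $Y+\tfrac12$) so that $|\log(Y/(m_1m_2))|\gg 1/Y$; then the truncated Perron error per term is $\ll Y/T$, giving total error $\ll Y^3/T \ll X^{-7}$ with your choice $T=X^{10}$, which is indeed negligible. Third, your accounting of $\log$-powers (one from the dyadic sum, one from $\log T$, $r-2$ from the sum over $m_3,\dots,m_r$, plus the $C$ from the lemma) is fine because the constant $C$ in the corollary is allowed to depend on $r$, $p$, $k_1$, $k_2$. With those details spelled out the argument is complete.
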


\begin{proof}
This follows from Lemma \ref{lLargeSieve} by a standard argument, see for example \cite[Corollary 4.4]{KPS}.
\end{proof}

\subsection{A Siegel--Walfisz type result}
Throughout this subsection, we fix an odd prime number $p$. Recall that we have fixed a choice of Dirichlet character $\chi_q: \Z \rightarrow \mathbb{C}$ with period $q$ and of order $p$ for each prime $q \equiv 1 \bmod p$. This choice is not intrinsic, as $\chi_q^c$ has the same field of definition for every invertible $c \in (\Z/p\Z)^\ast$. Nevertheless, we have already encountered sums of the type
\begin{align}
\label{eBiasedSum}
\sum_{\substack{q \leq X \\ q \equiv 1 \bmod p}} \chi_q(\ell)
\end{align}
for some fixed prime $\ell$. Although it is intrinsic whether $\chi_q(\ell)$ equals $1$ or not, the sum \eqref{eBiasedSum} can easily be made to be biased (for example, by choosing the $\chi_q$ to satisfy $\chi_q(\ell) \in \{1, \zeta\}$ for some fixed primitive $p$-th root of unity $\zeta \in \mathbb{C}$). We wish to avoid this behavior, which motivates our next two definitions and theorem. We start with some notation.

\begin{mydef}
Let $\mathcal{I}$ be an index set. We say that a sequence of integers $(a_i)_{i \in \mathcal{I}}$ and a vector $(c_i)_{i \in \mathcal{I}} \in \mathbb{F}_p^{\mathcal{I}}$ is eligible if the $a_i \geq 1$ are squarefree, pairwise coprime, exclusively divisible by primes $0, 1$ modulo $p$ and if there exists $i \in \mathcal{I}$ such that $a_i > 1$ and $c_i \neq 0$.
\end{mydef}

Our next definition encapsulates the potentially bad behavior in the sum \eqref{eBiasedSum} that we wish to avoid in general.

\begin{mydef}
Let $X > 10$ be a real number, let $C > 0$ be a real number and let $n \in \Z_{\geq 1}$. We say that the characters $(\chi_q)_{q \leq X}$ are prepared for equidistribution up to $(C, n)$ if for all eligible sequences $(a_i)_{1 \leq i \leq n}$ and $(c_i)_{1 \leq i \leq n} \in \mathbb{F}_p^n$ with $1 \leq a_i \leq (\log X)^C$, for all $\zeta \in \langle \zeta_p \rangle$ with $\zeta \neq 1$ and for all $(\zeta_i)_{1 \leq i \leq n}$ with $\zeta_i \in \langle \zeta_p \rangle$ we have
$$
\left| \#\left\{\begin{array}{c} q \leq X, \\ q \equiv 1 \bmod p \end{array} \hspace{-0.15cm} : \hspace{-0.15cm} \begin{array}{c} \prod_{i = 1}^n \chi_q^{c_i}(a_i) = \zeta \\ \chi_{a_i}(q) = \zeta_i \end{array} \right\}  - \frac{\#\left\{\begin{array}{c} q \leq X, \\ q \equiv 1 \bmod p \end{array} \hspace{-0.15cm} : \hspace{-0.15cm} \begin{array}{c} \prod_{i = 1}^n \chi_q^{c_i}(a_i) \neq 1 \\ \chi_{a_i}(q) = \zeta_i \end{array}\right\}}{p - 1} \right| \leq X^{3/4}.
$$
\end{mydef}

\begin{theorem}
\label{tGoodChoice}
Let $C > 0$ be a real number and let $n \in \Z_{\geq 1}$. Then there exists a real number $X_0 > 10$ such that for all $X \geq X_0$ there exists a choice of characters $(\chi_q')_{q \leq X}$ that are prepared for equidistribution up to $(C, n)$.
\end{theorem}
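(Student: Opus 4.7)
The plan is probabilistic, exploiting the fact that for any $b \in (\Z/p\Z)^{\ast}$ the twist $\chi_q^{b}$ is another Dirichlet character of period $q$ and order $p$, cutting out the same cyclic degree $p$ subextension as $\chi_q$. Starting from the fixed initial choice $(\chi_q)_{q \leq X,\, q \equiv 1 \bmod p}$, I would draw independent uniform exponents $b(q) \in (\Z/p\Z)^{\ast}$ and set $\chi_q' := \chi_q^{b(q)}$. The key observation is that for any fixed eligible tuple $((a_i)_{i=1}^{n}, (c_i), \zeta, (\zeta_i))$ with $a_i \leq (\log X)^{C}$, the event $\prod_{i} (\chi_q')^{c_i}(a_i) = 1$ is equivalent to $\prod_i a_i^{c_i}$ being a $p$-th power modulo $q$ and is therefore independent of $b(q)$; conditional on this event not occurring, the value
\[
\prod_{i} (\chi_q')^{c_i}(a_i) = \Bigl(\prod_{i} \chi_q^{c_i}(a_i)\Bigr)^{b(q)}
\]
is uniformly distributed over the $p - 1$ non-trivial $p$-th roots of unity (since $\omega^{b}$ permutes these as $b$ ranges over $(\Z/p\Z)^{\ast}$). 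The side condition $\chi_{a_i}'(q) = \zeta_i$ depends only on $b(p)$ for primes $p \mid a_i$, each of which is $\leq (\log X)^{C}$.

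Next I would split the primes $q \leq X$ into \emph{small} primes ($q \leq (\log X)^{C}$) and \emph{large} primes, and condition on an arbitrary choice of $b(p)$ for the small primes. Under this conditioning the side conditions $\chi_{a_i}'(q) = \zeta_i$ become deterministic restrictions on $q$, and the indicator of $\prod_{i} (\chi_q')^{c_i}(a_i) = \zeta$, restricted to large $q$ contributing to the ``$\neq 1$'' count, is Bernoulli$(1/(p-1))$ independently across such $q$. Writing $N_\zeta^{\mathrm{large}}$ and $N_{\neq 1}^{\mathrm{large}}$ for the corresponding large-prime counts, one obtains $\mathbf{E}[N_\zeta^{\mathrm{large}}] = N_{\neq 1}^{\mathrm{large}}/(p-1)$, and Hoeffding's inequality yields
\[
\mathbf{P}\Bigl(\bigl|N_\zeta^{\mathrm{large}} - N_{\neq 1}^{\mathrm{large}}/(p-1)\bigr| > X^{3/4}/2\Bigr) \leq 2 \exp\bigl(-X^{3/2}/(2 N_{\neq 1}^{\mathrm{large}})\bigr) \leq 2 \exp(-X^{1/2}/2),
\]
while the small-prime contributions to $N_\zeta$ and $N_{\neq 1}$ are trivially each bounded by $(\log X)^{C} = o(X^{3/4})$.

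Finally, the number of eligible tuples $((a_i), (c_i), \zeta, (\zeta_i))$ with $a_i \leq (\log X)^{C}$ is at most $(\log X)^{Cn} \cdot p^{2n}(p-1) = O((\log X)^{Cn})$, so a union bound gives a total failure probability of $O((\log X)^{Cn} \exp(-X^{1/2}/2))$, which tends to $0$ as $X \to \infty$. Consequently, for $X_0$ sufficiently large and every $X \geq X_0$, some realisation of $(b(q))_q$ produces a family $(\chi_q')_{q \leq X}$ that is prepared for equidistribution up to $(C, n)$. The only substantive subtlety in this plan is the bookkeeping for the small/large prime split so that conditional independence works cleanly; the algebraic uniformity step is immediate from the $(\Z/p\Z)^{\ast}$-action on order-$p$ characters, and the analytic concentration is comfortable because the target error $X^{3/4}$ greatly exceeds the natural standard deviation of order $\sqrt{X}$.
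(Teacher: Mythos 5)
Your proof is correct and follows essentially the same probabilistic strategy as the paper: randomly twist each $\chi_q$ by a uniform exponent $b(q) \in (\Z/p\Z)^\ast$, observe that $\prod_i(\chi_q')^{c_i}(a_i) = \bigl(\prod_i \chi_q^{c_i}(a_i)\bigr)^{b(q)}$ is uniform over nontrivial $p$-th roots of unity when the product is $\neq 1$, apply Hoeffding's inequality, and then union-bound over the $O((\log X)^{Cn})$ eligible tuples. Where you go further than the paper is the small/large prime split: by conditioning on the exponents $b(\ell)$ at the primes $\ell \leq (\log X)^C$, you make explicit that the side constraints $\chi_{a_i}'(q) = \zeta_i$ become deterministic restrictions on the large primes $q$, so that the remaining Bernoulli variables are genuinely independent. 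The paper's proof defines its index set $\mathcal{P}_{p,X}$ in terms of $\chi_{a_i}(q) = \zeta_i$ --- which itself depends on the randomized characters at small primes --- and does not comment on this interaction; your conditioning step fills that gap cleanly. You also correctly note (implicitly) that no lower bound on the number of primes in the index set is needed: the paper invokes the effective Chebotarev density theorem to show $\#\mathcal{P}_{p,X} \gg X/\log X$, but Hoeffding with only the trivial upper bound $N_{\neq 1}^{\mathrm{large}} \leq X$ already gives the $\exp(-cX^{1/2})$-type tail that wins against the $(\log X)^{Cn}$-sized union.
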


\begin{proof}
Write $\mathcal{P}_{p, X}$ for the set of primes $q \leq X$ with $q \equiv 1 \bmod p$, $\prod_{i = 1}^n \chi_q^{c_i}(a_i) \neq 1$ and $\chi_{a_i}(q) = \zeta_i$ for all $1 \leq i \leq n$. Write $L(a_i)$ for the extension cut out by $\chi_{a_i}$. Since $p$ is odd, the extensions $\Q(\zeta_p) L(a_i)/\Q(\zeta_p)$ and $\Q(\zeta_p, \sqrt[p]{a_i})/\Q(\zeta_p)$ are linearly disjoint. Hence it follows from the effective Chebotarev density theorem and the eligible assumption that
\begin{align}
\label{ePpx}
\# \mathcal{P}_{p, X} \gg_{C, n, p} X/\log X.
\end{align}
We fix one choice of $\chi_q$ for each $q \in \mathcal{P}_{p, X}$, and we define
$$
\mathcal{S} := \mathrm{Map}(\mathcal{P}_{p, X}, (\Z/p\Z)^\ast).
$$
Then the set of all possible choices for $\chi_q$ is in bijection with $\mathcal{S}$ by attaching the choice $(\chi_q^{f(q)})_{q \in \mathcal{P}_{p, X}}$ to a map $f \in \mathcal{S}$. We endow $(\Z/p\Z)^\ast$ with the uniform probability measure, which naturally gives rise to a probability measure on $\mathcal{S}$ (with evaluation at different primes of $\mathcal{P}_{p, X}$ defining independent events). 

We now apply Hoeffding's inequality to the independent random variables $X_q = f(q)$ with $f \in \mathcal{S}$. The total number of independent random variables is $\# \mathcal{P}_{p, X}$, for which we recall the lower bound from equation \eqref{ePpx}. If the $(a_i)_{1 \leq i \leq n}$, $(c_i)_{1 \leq i \leq n}$, $\zeta$ and $(\zeta_i)_{1 \leq i \leq n}$ are fixed and if $X$ is sufficiently large in terms of $C$, $n$ and $p$ only, then Hoeffding's inequality shows that the number of choices $f \in \mathcal{S}$ not satisfying
$$
\left| \#\left\{q \in \mathcal{P}_{p, X} : \prod_{i = 1}^n \chi_q^{c_i f(q)}(a_i) = \zeta\right\}  - \frac{\# \mathcal{P}_{p, X}}{p - 1} \right| \leq X^{3/4}
$$
is bounded by $|\mathcal{S}| \exp(-X^{1/10})$. Therefore the total amount of choices not prepared for equidistribution up to $(C, n)$ is bounded by
$$
(p - 1) p^{2n} (\log X)^{Cn} |\mathcal{S}| \exp(-X^{1/10})
$$
as there are $(p - 1) p^{2n}$ choices for the $c_i$, $\zeta_i$ and $\zeta$, and $(\log X)^{C n}$ choices for the $a_i$. Since this is smaller than $|\mathcal{S}|$ for sufficiently large $X$, the theorem follows.
\end{proof}

We will henceforth always pick our characters to be prepared for equidistribution up to $(C, n)$ for some fixed large $C$ and large $n$, which is possible thanks to Theorem \ref{tGoodChoice}. With this problem out of the way, we can show equidistribution of the sum \eqref{eBiasedSum}.

\begin{mydef}
Let $\mathcal{I}$ be an index set. We say that a sequence of integers $(a_i)_{i \in \mathcal{I}}$ and a vector $((c_i, d_i))_{i \in \mathcal{I}} \in \left(\mathbb{F}_p^2\right)^{\mathcal{I}}$ is quite eligible if the $a_i \geq 1$ are squarefree, pairwise coprime, exclusively divisible by primes $0, 1$ modulo $p$ and there exists $i \in \mathcal{I}$ such that $a_i > 1$ and $(c_i, d_i) \neq (0, 0)$.
\end{mydef}

\begin{lemma}
\label{lSW}
Let $C_1 > 0$ be a real number, let $\mathcal{I}$ be a finite index set and let $(w_i)_{i \in \mathcal{I}}$ be positive real numbers. Then there exists $C_2 > 0$ such that for all $d \leq X$, for all quite eligible sequences $(a_i)_{i \in \mathcal{I}}$ with $1 \leq a_i \leq (\log X)^{C_1}$ and for all $(c_i, d_i) \in \mathbb{F}_p^2$
$$
\left| \sum_{\substack{n \leq X \\ q \mid n \Rightarrow q \equiv 1 \bmod p \\ \gcd(n, d) = 1}} \mu^2(n) \prod_{i \in \mathcal{I}} \frac{\chi_n^{c_i}(a_i) \chi_{a_i}^{d_i}(n)}{w_i^{\omega(n)}} \right| \leq \frac{C_2X}{(\log X)^{C_1}}.
$$
\end{lemma}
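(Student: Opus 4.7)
The plan is to view the sum as the mean of a multiplicative function and apply a standard two-step strategy: first establish cancellation at the prime values of that function via the prepared for equidistribution property combined with Siegel--Walfisz, then propagate this cancellation to the full sum via a mean-value theorem for multiplicative functions.

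Set $W := \prod_{i \in \mathcal{I}} w_i^{-1}$ and $F(n) := \prod_{i \in \mathcal{I}} \chi_n^{c_i}(a_i) \chi_{a_i}^{d_i}(n)$, so that the expression inside the absolute value equals $\sum_n g(n)$ with $g(n) := \mu^2(n) W^{\omega(n)} F(n)$ restricted to $n \leq X$ satisfying the stated summation conditions. Since $\chi_n^{c_i}(a_i) = \prod_{q \mid n} \chi_q^{c_i}(a_i)$ is multiplicative in $n$, the function $g$ is multiplicative on squarefree integers with $|g(q)| \leq W$ on primes. By the quite eligible hypothesis, there exists $i_0 \in \mathcal{I}$ with $a_{i_0} > 1$ and $(c_{i_0}, d_{i_0}) \neq (0, 0)$; the pairwise coprimality of the $a_i$ ensures that the extensions $\Q(\zeta_p, \sqrt[p]{a_{i_0}})$ and $\Q(\zeta_{a_{i_0}})$ are linearly disjoint from the analogous extensions attached to the indices $i \neq i_0$, whence $F$ is genuinely non-trivial on primes. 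Decomposing $\sum_{q \leq Y} F(q)$ according to the fibres of $(\chi_{a_i}(q))_{i \in \mathcal{I}}$ and applying the prepared for equidistribution property (Theorem~\ref{tGoodChoice}, invoked with parameters slightly larger than $(C_1, |\mathcal{I}|)$) to balance the non-trivial fibres of $\prod_i \chi_q^{c_i}(a_i)$, combined with Siegel--Walfisz for the Dirichlet-character content $\prod_i \chi_{a_i}^{d_i}$ (whose conductor is at most $(\log X)^{C_1 |\mathcal{I}|}$), one obtains for any $A > 0$ a prime-sum bound
\[
\sum_{\substack{q \leq Y,\ q \equiv 1 \bmod p \\ \gcd(q,\, d \prod_i a_i) = 1}} F(q) \ \ll_A\ \frac{Y}{(\log Y)^A}.
\]

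The transfer from this prime cancellation to the full sum is standard: the generating Dirichlet series $\sum_n g(n) n^{-s}$ admits a factorisation $\zeta(s)^{\alpha} L(s)$ with $\alpha$ depending on $W$ and $p$ and $L(s)$ analytic slightly to the left of $\mathrm{Re}(s) = 1$ by virtue of the prime-sum estimate just obtained. Via the Selberg--Delange method (or equivalently, Perron's formula with contour shift, or an elementary Landau--Wirsing--Fainleib type argument), a saving of $(\log X)^{-A}$ at prime level propagates to a saving of $(\log X)^{-A + O(1)}$ on the full sum, where the $O(1)$ depends on $W$ and $|\mathcal{I}|$; taking $A$ sufficiently large in terms of $C_1$, $W$, and $|\mathcal{I}|$ gives the claimed bound with a constant $C_2$ absorbing these dependencies. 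The main obstacle is precisely this transfer step, because the weight $W^{\omega(n)}$ inflates the trivial bound $\sum_n |g(n)|$ by a factor of order $(\log X)^{W - 1}$ when $W > 1$, so the prime cancellation must overpower this inflation. Fortunately, both the prepared for equidistribution property and Siegel--Walfisz provide arbitrary polylogarithmic savings (at the cost of ineffective constants depending on $A$ alone), so the required saving of $(\log X)^{-C_1}$ is always obtainable by choosing $A$ large enough.
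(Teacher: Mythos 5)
Your high-level structure matches the paper's: establish cancellation at the prime values and propagate it to the full sum via a mean-value theorem for multiplicative functions (the paper uses \cite[Theorem 13.2]{Kou}, i.e.\ the LSD method; your Selberg--Delange/Landau--Wirsing alternatives are interchangeable with it). You also correctly identify the potential obstruction coming from the weight $W^{\omega(n)}$ and correctly observe that arbitrary polylogarithmic savings at the prime level overcome it.

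However, there is a genuine gap in your prime-sum argument. After fibring over the values $(\chi_{a_i}(q))_i = (\zeta_i)_i$, the quantity $\sum_q \prod_i \chi_q^{c_i}(a_i)$ over the fibre splits into a contribution from $q$ with $\prod_i \chi_q^{c_i}(a_i) = 1$ (contributing $+1$ each) and from $q$ with $\prod_i \chi_q^{c_i}(a_i) \neq 1$. The prepared-for-equidistribution property (Theorem~\ref{tGoodChoice}) \emph{only} says that the $p-1$ non-trivial values $\zeta \neq 1$ each occur with frequency approximately $\tfrac{1}{p-1}$ of the total non-trivial count; it says nothing about the ratio between the trivial and the non-trivial counts. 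That ratio is the content of equation \eqref{eGoldsteinSW2} in the paper, which is derived from Goldstein's Siegel--Walfisz theorem for Hecke characters over $\Q(\zeta_p)$ (equivalently, effective Chebotarev for the non-abelian Kummer extension $\Q(\zeta_p,\sqrt[p]{\prod a_i^{c_i}})/\Q$). Ordinary Siegel--Walfisz for the Dirichlet characters $\prod_i \chi_{a_i}^{d_i}$, which is all you invoke, cannot control this, because $q \mapsto \prod_i\chi_q^{c_i}(a_i)$ is not a Dirichlet character in $q$ — it is a power-residue symbol in the auxiliary variable $\mathrm{Up}(q)$ of $\Q(\zeta_p)$. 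Without the Goldstein/Chebotarev ingredient, your argument cannot show that the $\zeta=1$ bucket has the correct proportion, and the prime sum bound does not follow. (A smaller inaccuracy: the linear-disjointness needed is that of $\Q(\zeta_p)L(a_i)/\Q(\zeta_p)$ and $\Q(\zeta_p,\sqrt[p]{a_i})/\Q(\zeta_p)$, used to show the resulting Hecke character over $\Q(\zeta_p)$ is nontrivial; your statement about pairwise coprimality of the $a_i$ is relevant but is not the operative disjointness here.)
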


\begin{proof}
We will prove this when $n$ is restricted to primes $q$, i.e.
$$
\left| \sum_{\substack{q \leq Q, q \equiv 1 \bmod p \\ q \nmid d}} \prod_{i \in \mathcal{I}} \frac{\chi_q^{c_i}(a_i) \chi_{a_i}^{d_i}(q)}{w_i} \right| \leq \frac{C_2 Q}{(\log Q)^{C_1}}
$$
for $Q > \exp\left((\log X)^{1/100}\right)$. Once we establish such an estimate, the lemma follows from the LSD method as codified in \cite[Theorem 13.2]{Kou}. 

Using that $d \leq X$, it is straightforward to estimate the contribution from $q \mid d$. Since $w_i$ is fixed, it therefore suffices to show that for all $C_1 > 0$ and for all $\mathcal{I}$, there exists $C_2 > 0$ such that for all eligible $(a_i)_{i \in \mathcal{I}}$ with $1 \leq a_i \leq (\log X)^{C_1}$ and for all $(c_i, d_i) \in \mathbb{F}_p^2$
\begin{align}
\label{eSWRed}
\left| \sum_{\substack{q \leq Q \\ q \equiv 1 \bmod p}} \prod_{i \in \mathcal{I}} \chi_q^{c_i}(a_i) \chi_{a_i}^{d_i}(q) \right| \leq \frac{C_2 Q}{(\log Q)^{C_1}}.
\end{align}
If $c_i = 0$ for all $i \in \mathcal{I}$, then we immediately get the desired cancellation from the Siegel--Walfisz theorem and the quite eligible assumption. Henceforth we assume that $c_i \neq 0$ for at least one $i \in \mathcal{I}$.

At this stage, we will transfer our sum to $\Q(\zeta_p)$. Note that every prime $q \equiv 1 \bmod p$ splits completely in $\Q(\zeta_p)$, so there are exactly $p - 1$ primes $\mathfrak{q}$ above $q$. Among these primes, we recall that there is precisely one prime $\mathfrak{q}$, namely $\mathrm{Up}(q)$, such that
$$
\chi_q(a) = \left(\frac{a}{\mathfrak{q}}\right)_{\Q(\zeta_p), p}
$$
for all $a \in \Z$. Then the sum in equation \eqref{eSWRed} is equal to
$$
\sum_{\substack{q \leq Q \\ q \equiv 1 \bmod p}} \prod_{i \in \mathcal{I}} \chi_q^{c_i}(a_i) \chi_{a_i}^{d_i}(q) = \sum_{\substack{q \leq Q \\ q \equiv 1 \bmod p}} \prod_{i \in \mathcal{I}} \left(\frac{a_i}{\mathrm{Up}(q)}\right)_{\Q(\zeta_p), p}^{c_i} \left(\frac{q}{\mathrm{Up}(a_i)}\right)_{\Q(\zeta_p), p}^{d_i}.
$$
Inspired by this, we define for each vector $\mathbf{e} = (e_i)_{i \in \mathcal{I}} \in \mathbb{F}_p^{\mathcal{I}}$ the Hecke character $\rho_{\mathbf{e}}$ on $\Q(\zeta_p)$ given by
$$
\rho_{\mathbf{e}}(\mathfrak{q}) = \prod_{i \in \mathcal{I}} \left(\frac{a_i}{\mathfrak{q}}\right)_{\Q(\zeta_p), p}^{c_i} \left(\frac{N(\mathfrak{q})}{\mathrm{Up}(a_i)}\right)_{\Q(\zeta_p), p}^{e_i}.
$$
We claim that $\rho_{\mathbf{e}}$ is not the trivial character. To this end, first recall that $c_i \neq 0$ for at least one $i$ and write $L(a_i)$ for the fixed field of $\chi_{a_i}$. Since $p$ is odd, we observe that the extensions $\Q(\zeta_p) L(a_i)/\Q(\zeta_p)$ and $\Q(\zeta_p, \sqrt[p]{a_i})/\Q(\zeta_p)$ are linearly disjoint. Then our quite eligible assumption readily implies that $\rho_{\mathbf{e}}$ is not the trivial character.

The main theorem of Goldstein \cite{Goldstein} gives the existence of $C_2 > 0$ such that for all $\mathbf{e}$ and for all $Q > \exp\left((\log X)^{1/100}\right)$
$$
\left|\sum_{N(\mathfrak{q}) \leq Q} \rho_{\mathbf{e}}(\mathfrak{q})\right| \leq \frac{C_2 Q}{(\log Q)^{C_1}}.
$$
After enlarging $C_2$, the above estimate continues to hold when restricted to completely split primes. This implies that for each vector $(\zeta_i)_{1 \leq i \leq n}$
\begin{align}
\label{eGoldsteinSW}
\left|\sum_{\substack{N(\mathfrak{q}) \leq Q \\ \mathfrak{q} \text{ unr.~of degree 1}}} \rho_{\mathbf{0}}(\mathfrak{q}) \mathbf{1}_{\left(\frac{N(\mathfrak{q})}{\mathrm{Up}(a_i)}\right)_{\Q(\zeta_p), p} = \zeta_i}\right| \leq \frac{C_2 Q}{(\log Q)^{C_1}}.
\end{align}
By grouping together $\mathfrak{q}$ with all of its conjugates (of which there are $p - 1$ in total), equation \eqref{eGoldsteinSW} may be equivalently phrased as
\begin{multline}
\label{eGoldsteinSW2}
\left|(p - 1) \#\left\{\begin{array}{c} q \leq Q, \\ q \equiv 1 \bmod p \end{array} \hspace{-0.15cm} : \hspace{-0.15cm} \begin{array}{c} \prod_{i = 1}^n \chi_q^{c_i}(a_i) = 1 \\ \chi_{a_i}(q) = \zeta_i \end{array} \right\}  - \#\left\{\begin{array}{c} q \leq Q, \\ q \equiv 1 \bmod p \end{array} \hspace{-0.15cm} : \hspace{-0.15cm} \begin{array}{c} \prod_{i = 1}^n \chi_q^{c_i}(a_i) \neq 1 \\ \chi_{a_i}(q) = \zeta_i \end{array}\right\} \right| \\
\leq \frac{C_2 Q}{(\log Q)^{C_1}}.
\end{multline}
Indeed, if there is one conjugate $\mathfrak{q}'$ of $\mathfrak{q}$ with $\rho_{\mathbf{0}}(\mathfrak{q}') = 1$, then all conjugates $\mathfrak{q}'$ of $\mathfrak{q}$ satisfy $\rho_{\mathbf{0}}(\mathfrak{q}') = 1$. In this case we get a total contribution of $p - 1$, and we have $\prod_{i = 1}^n \chi_q^{c_i}(a_i) = 1$. Otherwise, if there is no conjugate $\mathfrak{q}'$ of $\mathfrak{q}$ with $\rho_{\mathbf{0}}(\mathfrak{q}') = 1$, then $\rho_{\mathbf{0}}(\mathfrak{q}')$ runs through all primitive $p$-th roots of unity as $\mathfrak{q}'$ runs through all conjugates. These roots of unity sum together to $-1$, and this case occurs if and only if $\prod_{i = 1}^n \chi_q^{c_i}(a_i) \neq 1$.

Since the ideals $\mathrm{Up}(q)$ have been prepared for equidistribution, we deduce from equation \eqref{eGoldsteinSW2} and the triangle inequality that
$$
\left|\#\left\{\begin{array}{c} q \leq Q, \\ q \equiv 1 \bmod p \end{array} \hspace{-0.24cm} : \hspace{-0.24cm} \begin{array}{c} \prod_{i = 1}^n \chi_q^{c_i}(a_i) = 1 \\ \chi_{a_i}(q) = \zeta_i \end{array} \right\}  - \#\left\{\begin{array}{c} q \leq Q, \\ q \equiv 1 \bmod p \end{array} \hspace{-0.24cm} : \hspace{-0.24cm} \begin{array}{c} \prod_{i = 1}^n \chi_q^{c_i}(a_i) = \zeta \\ \chi_{a_i}(q) = \zeta_i \end{array}\right\} \right| \\
\leq \frac{2C_2 Q}{(\log Q)^{C_1}}
$$
for each $\zeta \neq 1$. By freezing $\chi_{a_i}(q)$ and then summing over the possible values of $\prod_{i = 1}^n \chi_q^{c_i}(a_i)$, we conclude that
$$
\left| \sum_{\substack{q \leq Q \\ q \equiv 1 \bmod p}} \prod_{i \in \mathcal{I}} \chi_q^{c_i}(a_i) \chi_{a_i}^{d_i}(q) \right| \leq \frac{2C_2 p^{n + 1} Q}{(\log Q)^{C_1}}
$$
for $Q$ sufficiently large. We recognize the estimate \eqref{eSWRed}, ending the proof of the lemma.
\end{proof}

\section{Proof of main theorem}
\label{sMain}
The goal of this section is twofold. We will finish the proof of Theorem \ref{tMainAnalytic} in our first subsection. Once we have accomplished this goal, we have all the ingredients to prove our main theorem.

\subsection{Main analytic theorem}
Before we start the proof of our main analytic result (Theorem \ref{tMainAnalytic}), we require one more technical result. 

\begin{theorem}
\label{twabBig}
Let $p$ be an odd prime. Let $A$ be a nontrivial finite abelian $p$-group. Then we have for all $s \in A - \{\textup{id}\}$ and all $t \in A$
$$
\lim_{X \rightarrow \infty} \frac{\#\{\varphi \in \mathcal{F}(X) : \textup{there is no } q > \exp((\log X)^{1/2}) \textup{ with } \varphi(\sigma_q) = s, \varphi(\Frob_q) = t\}}{\# \mathcal{F}(X)} = 0.
$$
\end{theorem}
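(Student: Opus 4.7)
The plan is a standard second moment / Chebyshev argument after a union bound to fix $s \in A-\{\mathrm{id}\}$ and $t \in A$. Define
$$N(\varphi) := \#\{q \text{ prime} : \exp((\log X)^{1/2}) < q \leq X,\ \varphi(\sigma_q) = s,\ \varphi(\Frob_q) = t\}.$$
It suffices to establish the first and second moment asymptotics
$$\sum_{\varphi \in \mathcal{F}(X)} N(\varphi) \gg R(X) \cdot \#\mathcal{F}(X), \qquad \sum_{\varphi \in \mathcal{F}(X)} N(\varphi)^2 \ll R(X)^2 \cdot \#\mathcal{F}(X),$$
with $R(X) \to \infty$; applying Chebyshev to the nonnegative integer $N$ then yields $N(\varphi) \geq 1$ on all but a $o(1)$-fraction of $\mathcal{F}(X)$. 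The expected order of growth is $R(X) \asymp \log\log X$, coming from $\sum_{\exp((\log X)^{1/2}) < q \leq X} 1/q = \tfrac{1}{2}\log\log X + O(1)$ together with Wood's theorem giving $\#\mathcal{F}(Y) \sim c_A Y (\log Y)^{e_A - 1}$ for appropriate constants.

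First I would compute the first moment by swapping sums. For each $q$ in the range, Theorem \ref{tMainPar} puts $\{\varphi \in \mathcal{F}(X) : \varphi(\sigma_q) = s\}$ in bijection with tuples $\mathbf{w}' \in \mathcal{A}$ obtained by factoring $q$ out of the $s$-coordinate, giving a count essentially of size $\#\mathcal{F}(X/q)$ up to the surjectivity condition. The Frobenius condition $\varphi(\Frob_q) = t$ is detected by characters $\chi \in A^\vee$: Theorem \ref{tMainPar} expresses $\iota(\chi(\varphi(\Frob_q)))$ as a product of power residue symbols $\chi_{q', \log_p \ord(\chi)}^{\chi(a)}(q)$ over the remaining ramified primes $q' \neq q$. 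The Siegel--Walfisz type bound Lemma \ref{lSW} applied to these character sums yields equidistribution with power-of-log savings, and hence
$$\#\{\varphi \in \mathcal{F}(X) : \varphi(\sigma_q) = s,\ \varphi(\Frob_q) = t\} = \tfrac{1}{|A|} \cdot \#\{\varphi \in \mathcal{F}(X) : \varphi(\sigma_q) = s\} + O\left(\#\mathcal{F}(X/q) / (\log X)^{100}\right).$$
Summing over $q$ gives the first moment lower bound. The second moment unfolds similarly as a double sum over $(q_1, q_2)$; the diagonal $q_1 = q_2$ contributes $\sum_\varphi N(\varphi) = o(R(X)^2 \#\mathcal{F}(X))$ since $R(X) \to \infty$, while the off-diagonal is handled by the same parametrization at two primes simultaneously combined with Lemma \ref{lSW}, producing the matching main term $R(X)^2 \#\mathcal{F}(X) (1 + o(1))$.

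The main obstacle will be uniformity of the equidistribution of $\varphi(\Frob_q)$ in $A$ as both $q$ and $\varphi$ vary. This is precisely what Lemma \ref{lSW} was engineered to deliver: its (quite) eligibility hypothesis captures exactly the nontrivial character $\chi$ detecting a shift from the target Frobenius value $t$. Two technical nuisances then need to be cleaned up: (i) restricting from $\mathrm{Hom}(G_\Q, A)$ to $\mathcal{F} = \mathrm{Epi}(G_\Q, A)$ by enforcing the generation condition $\langle a : w_a \neq 1 \rangle = A$, which only removes a $o(1)$-fraction by an easy inclusion-exclusion on the subgroups of $A$; and (ii) isolating the place at $p$, whose contribution is uniformly bounded since $p$ is fixed. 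Combining these ingredients with Chebyshev yields the claim.
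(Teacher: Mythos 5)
Your plan takes a genuinely different route from the paper's. The paper does not compute moments: it directly evaluates
\[
\sum_{\mathbf{w}}\ \prod_{\substack{q\mid w_s\\ q>G(X)}}\bigl(1-\mathbf{1}_{\mathrm{Par}(\mathbf{w})(\Frob_q)=t}\bigr),
\]
expands the indicator via characters of $A$, and splits according to which auxiliary variables $d_\chi$ are $>1$. The ``all $d_\chi=1$'' piece is a multiplicative function whose Dirichlet-series density at primes $q>G(X)$ is deficient by $\delta=\tfrac{1}{|A|\varphi(\ord(s))}$, so Shiu's upper bound gives $\ll X(\log X)^{e_A-1-\delta/2}=o(\#\mathcal{F}(X))$ upon comparing with M\"aki. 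The remaining pieces have some $d_\chi>G(X)$ by construction, and then Corollary~\ref{cLargeSieve} (the large sieve, with $z=G(X)$) gives power savings. No moment computation is needed, and in particular the paper never needs to control correlations between two large special primes.

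There are two concrete gaps in your plan. First, Lemma~\ref{lSW} is not the tool you can reach for at the key step. In the first moment you must show cancellation, uniformly in the large prime $q$, in the character sum that detects $\varphi(\Frob_q)=t$. That character has modulus $q$, and $q>G(X)=\exp((\log X)^{1/2})$, which grows far faster than any fixed power of $\log X$; but the eligibility hypothesis of Lemma~\ref{lSW} requires the fixed moduli $a_i$ to be bounded by $(\log X)^{C_1}$. So Lemma~\ref{lSW} does not apply. You would instead need the bilinear large sieve Corollary~\ref{cLargeSieve}, and to invoke it you need \emph{both} paired variables to be $>z$; this forces the extra reduction (which the paper uses) that all $w_a>\exp((\log X)^{3/4})$. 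The second moment off-diagonal has the same problem amplified: after detecting both Frobenius conditions by characters $\chi^{(1)},\chi^{(2)}\in A^\vee$, the case $\chi^{(1)},\chi^{(2)}\neq 0$ produces genuine bilinear sums involving the cross terms $\chi_{q_1,n}(q_2)$ and $\chi_{q_2,n}(q_1)$, which again calls for the large sieve and some care, not a single-variable Siegel--Walfisz.

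Second, as written, the displayed bounds $\sum_\varphi N(\varphi)\gg R\cdot\#\mathcal{F}(X)$ and $\sum_\varphi N(\varphi)^2\ll R^2\cdot\#\mathcal{F}(X)$ are too weak: Chebyshev with these order-of-magnitude estimates only yields $\mathbb{P}(N=0)\ll 1$, not $o(1)$. You need the matching asymptotics, i.e.\ $\mathbb{E}[N^2]=(1+o(1))\,\mathbb{E}[N]^2$, so the implied constants in the first and second moment must agree precisely. Your prose gestures at this (``producing the matching main term'') but the plan as stated does not establish it, and establishing it runs into exactly the bilinear issues above. In short, the strategy is salvageable but considerably more delicate than suggested; the paper's argument is shorter precisely because it replaces the variance computation with a density-deficit observation controlled by Shiu and M\"aki, reserving the large sieve only for the error terms.
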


\begin{proof}
By Theorem \ref{tMainPar} we may switch from $\varphi \in \mathcal{F}(X)$ to tuples $\mathbf{w} \in \mathcal{A}$. Let $s \in A - \{\text{id}\}$ and $t \in A$ be given. It is easy to show that almost all $\mathbf{w}$ satisfy $w_a > \exp((\log X)^{3/4})$ for all $a \in A - \{\text{id}\}$. Define $G(X) := \exp((\log X)^{1/2})$. Now among such tuples, we see that the numerator equals
$$
\sum_{\substack{\mathbf{w} \in \mathcal{A} \\ \prod_{a \in A - \{\text{id}\}} w_a \leq X \\ w_a > \exp((\log X)^{3/4}) \forall a \in A - \{\text{id}\}}} \prod_{\substack{q \mid w_s \\ q > G(X)}} \left(1 - \mathbf{1}_{\mathrm{Par}(\mathbf{w})(\Frob_q) = t}\right),
$$
which may be re-expressed as
$$
\sum_{\substack{\mathbf{w} \in \mathcal{A} \\ \prod_{a \in A - \{\text{id}\}} w_a \leq X \\ w_a > \exp((\log X)^{3/4}) \forall a \in A - \{\text{id}\}}} \prod_{\substack{q \mid w_s \\ q > G(X)}} \hspace{-0.15cm} \left(\frac{|A| - 1}{|A|} - \frac{1}{|A|} \sum_{\chi \in A^\vee - \{\text{id}\}} \iota\left(\chi(\mathrm{Par}(\mathbf{w})(\Frob_q) - t)\right)\right).
$$
We now introduce
$$
w_s^{\text{rough}} := \prod_{\substack{q \mid w_s \\ q > G(X)}} q, \quad \quad \quad \quad \omega_{\text{rough}}(n) := \left|\left\{q \mid n : q > G(X)\right\}\right|.
$$
We uniquely factor $w_s = w_s^{\text{rough}} w_s^{\text{smooth}}$. This allows us to rewrite the sum as
\begin{multline*}
\sum_{\substack{\mathbf{w} \in \mathcal{A} \\ \prod_{a \in A - \{\text{id}\}} w_a \leq X \\ w_a > \exp((\log X)^{3/4}) \forall a \in A - \{\text{id}\}}} \sum_{d \mid w_s^{\text{rough}}} \left(\frac{|A| - 1}{|A|}\right)^{\omega_{\text{rough}}(w_s/d)} \left(\frac{-\iota(-t)}{|A|}\right)^{\omega(d)} \\
\sum_{\substack{(\chi_q)_{q \mid d} \\ \chi_q \in A^\vee - \{\text{id}\}}} \prod_{q \mid d} \iota\left(\chi_q\left(\mathrm{Par}(\mathbf{w})(\Frob_q)\right)\right),
\end{multline*}
We change variables in the sum by setting $e = w_s^{\text{rough}}/d$ and moreover by setting for each $\chi \in A^\vee - \{\text{id}\}$
$$
d_\chi = \prod_{\substack{q \mid d \\ \chi_q = \chi}} q.
$$
This defines a change of variables, and transforms the sum into
\begin{multline*}
\sum_{\substack{\mathbf{w} \in \mathcal{A} \\ \prod_{a \in A - \{\text{id}\}} w_a \leq X \\ w_a > \exp((\log X)^{3/4}) \forall a \in A - \{\text{id}\}}} \sum_{\substack{w_s = w_s^{\text{smooth}} e \prod_\chi d_\chi \\ q \mid w_s^{\text{smooth}} \Leftrightarrow q \leq G(X)}} \left(\frac{|A| - 1}{|A|}\right)^{\omega(e)} \left(\frac{-\iota(-t)}{|A|}\right)^{\omega(\prod_{\chi \in A^\vee - \{\text{id}\}} d_\chi)} \\
\prod_{\chi \in A^\vee - \{\text{id}\}} \prod_{q \mid d_\chi} \iota\left(\chi\left(\mathrm{Par}(\mathbf{w})(\Frob_q)\right)\right).
\end{multline*}
We split this sum into $2^{|A| - 1}$ subsums, depending on whether $d_\chi > 1$ or $d_\chi = 1$ for each $\chi \in A^\vee - \{\text{id}\}$. The subsums for which $d_\chi > 1$ for at least one $\chi$ contribute negligibly (compared to $\# \mathcal{F}(X)$) due to the large sieve in Corollary \ref{cLargeSieve}. Here we use in an essential way that $d_\chi > 1$ implies $d_\chi > G(X)$ by construction of $d_\chi$, hence Corollary \ref{cLargeSieve} applies with $z = G(X)$. In the remaining subsum, we have $d_\chi = 1$ for all $\chi$, so this subsum equals
$$
\sum_{\substack{\mathbf{w} \in \mathcal{A} \\ \prod_{a \in A - \{\text{id}\}} w_a \leq X \\ w_a > \exp((\log X)^{3/4}) \forall a \in A - \{\text{id}\}}} \sum_{\substack{w_s = w_s^{\text{smooth}} e \\ q \mid w_s^{\text{smooth}} \Leftrightarrow q \leq G(X)}} \left(\frac{|A| - 1}{|A|}\right)^{\omega(e)}.
$$
We now drop the condition $w_a > \exp((\log X)^{3/4})$ again. Then the resulting sum may be rewritten as
$$
\sum_{n \leq X} f(n),
$$
where $f$ is the multiplicative function, supported on squarefree integers, given by
$$
f(q) =
\begin{cases}
|A| - 1 &\text{if } q = p \\
|\{a \in A - \{\text{id}\} : q \equiv 1 \bmod \ord(a)\}| &\text{if } q \leq G(X), q \neq p \\
\mathbf{1}_{q \equiv 1 \bmod \ord(s)} \left(\frac{|A| - 1}{|A|}\right) + \sum_{\substack{a \in A - \{\text{id}, s\} \\ q \equiv 1 \bmod \ord(a)}} 1 &\text{if } q > G(X).
\end{cases}
$$
Then the theorem follows upon comparing the upper bound coming from Shiu's result \cite{Shiu} with M\"aki's theorem \cite{Maki} to control $\# \mathcal{F}(X)$.
\end{proof}

We are now ready to finish the proof of our main analytic result, which is Theorem \ref{tMainAnalytic}.

\begin{proof}[Proof of Theorem \ref{tMainAnalytic}]
By Theorem \ref{twabBig} and the union bound, it suffices to show that
\begin{equation}
\label{eTrivialSel}
\lim_{X \rightarrow \infty} \frac{\# \{\varphi \in \mathcal{F}^\sharp(X) : \mathrm{Sel}(g(\varphi)) = 0\}}{\# \mathcal{F}(X)} = 1.
\end{equation}
To this end, we are going to show that
$$
\sum_{\varphi \in \mathcal{F}^\sharp(X)} \# \mathrm{Sel}(g(\varphi)) \sim \# \mathcal{F}(X),
$$
which certainly implies equation \eqref{eTrivialSel}. We start by applying Theorem \ref{tCharSum}, which yields
\begin{multline}
\label{eMainSum}
\sum_{\varphi \in \mathcal{F}^\sharp(X)} \# \mathrm{Sel}(g(\varphi)) = \sum_{\substack{\mathbf{w} = (w_i)_{i \in \mathcal{I}_3} \\ \mathbf{f} = (f_a)}}^{\flat \flat \flat} \sum_{\alpha \in \mathbb{F}_p} \tilde{h}(\mathbf{w}, \alpha) \times  \\
\prod_{\substack{k \in \mathcal{I}_3' \\ \ell \in \mathcal{I}_3'}} \psi_{w_k}^{c(k)}\left(w_\ell^{\nu(\ell)}\right) \prod_{\substack{k \in \mathcal{I}_3'' \\ \ell \in \mathcal{I}_3'}} \psi_{w_k}^{c(k)}\left(w_\ell^{\nu(\ell)}\right) \prod_{\ell \in \mathcal{I}_3} \prod_{k \in \mathcal{I}_3} \prod_{q \mid w_k} \chi_{q, \log_p \ord(\chi(\ell))}(w_\ell)^{\chi(\ell)(a(k))} \times \\
\prod_{\ell \in \mathcal{I}_3} \prod_{a \in A - \{\textup{id}\}} \chi_{f_a, \log_p \ord(\chi(\ell))}(w_\ell)^{\chi(\ell)(a)}. 
\end{multline}
We dissect this sum into $2^{|\mathcal{I}_3|}$ pieces: namely, for each subset $U$ of $\mathcal{I}_3$, we define $N(X; U)$ to be the subsum of the RHS, where we impose the additional condition
$$
w_i > \exp\left((\log X)^{\frac{1}{10 p^3 |A|^5}}\right) \Longleftrightarrow i \in U.
$$
Our goal is now to upper bound each individual $N(X; U)$ (except for $U = \mathcal{U}$, which will give rise to the main term).

Note that the last summation condition in equation \eqref{eSummationConditions} ensures that, for sufficiently large $X$, either $N(X; U)$ is empty or that the natural projection map $\pi: U \rightarrow (A - \{\text{id}\}) \times A$ is surjective. Since $N(X; U) = 0$ in the first case, we shall henceforth assume that $\pi: U \rightarrow (A - \{\text{id}\}) \times A$ is surjective. 

We will now distinguish three further cases. First suppose that $U$ is not unlinked. Then we claim that
\begin{equation}
\label{eLSclaim}
N(X; U) = o\left(\#\mathcal{F}(X)\right).
\end{equation}
We will apply Corollary \ref{cLargeSieve} as follows. Since $U$ is not unlinked, there exist two distinct linked indices $u_1, u_2 \in U$. In the notation of Corollary \ref{cLargeSieve}, we take $r := \#\mathcal{I}_3$ and $k_i := \log_p \ord(a(u_i))$ (which depends only on the abelian group $A$). We let $m_1$ be the variable $w_{u_1}$ and we let $m_2$ be the variable $w_{u_2}$, and we let $m_3, \dots, m_r$ be the remaining variables in $\mathcal{I}_3 - \{u_1, u_2\}$. By definition of $U$, we have 
$$
w_{u_1}, w_{u_2} > \exp\left((\log X)^{\frac{1}{10 p^3 |A|^5}}\right) =: z.
$$
Since $z$ is much larger than any fixed power of $\log X$, we obtain equation \eqref{eLSclaim}, once we verify that our sum \eqref{eMainSum} has the shape in Corollary \ref{cLargeSieve}. In particular, we will pay attention to the key hypothesis in Corollary \ref{cLargeSieve} that $(c_1, c_2) \neq (0, 0)$.

Since $u_1$ and $u_2$ are distinct, $w_{u_1}$ and $w_{u_2}$ are coprime by the summation conditions. Hence we have $\psi_{w_{u_1}}(w_{u_2}) = \chi_{w_{u_1}, 1}(w_{u_2})$. Any term not containing both $w_{u_1}$ and $w_{u_2}$ can be absorbed into the $\alpha$ and $\beta$ terms of Corollary \ref{cLargeSieve}. The terms involving both $w_{u_1}$ and $w_{u_2}$ are precisely the product of the following six terms:
\begin{align*}
&\chi_{w_{u_1}, 1}(w_{u_2})^{\mathbf{1}_{u_1, u_2 \in \mathcal{I}_3'} \cdot c(u_1) \nu(u_2)} \\
&\chi_{w_{u_2}, 1}(w_{u_1})^{\mathbf{1}_{u_1, u_2 \in \mathcal{I}_3'} \cdot c(u_2) \nu(u_1)} \\
&\chi_{w_{u_1}, 1}(w_{u_2})^{\mathbf{1}_{u_1 \in \mathcal{I}_3'', u_2 \in \mathcal{I}_3'} \cdot c(u_1) \nu(u_2)} \\
&\chi_{w_{u_2}, 1}(w_{u_1})^{\mathbf{1}_{u_1 \in \mathcal{I}_3', u_2 \in \mathcal{I}_3''} \cdot c(u_2) \nu(u_1)} \\
&\chi_{w_{u_1}, \log_p \ord(\chi(u_2))}(w_{u_2})^{\chi(u_2)(a(u_1))} \\
&\chi_{w_{u_2}, \log_p \ord(\chi(u_1))}(w_{u_1})^{\chi(u_1)(a(u_2))}.
\end{align*}
In the notation of Corollary \ref{cLargeSieve}, we therefore take
\begin{align*}
c_1 &:= \chi(u_2)(a(u_1)) + \mathbf{1}_{u_1, u_2 \in \mathcal{I}_3'} \cdot c(u_1) \nu(u_2) + \mathbf{1}_{u_1 \in \mathcal{I}_3'', u_2 \in \mathcal{I}_3'} \cdot c(u_1) \nu(u_2) \\
c_2 &:= \chi(u_1)(a(u_2)) + \mathbf{1}_{u_1, u_2 \in \mathcal{I}_3'} \cdot c(u_2) \nu(u_1) + \mathbf{1}_{u_1 \in \mathcal{I}_3', u_2 \in \mathcal{I}_3''} \cdot c(u_2) \nu(u_1).
\end{align*}
Since $u_1$ and $u_2$ are linked, we have $c_2 \neq 0$. This proves \eqref{eLSclaim}.

Second suppose that $U$ is unlinked, but $U$ is not equal to the set $\mathcal{U}$ from Theorem \ref{tUnlinkedClassification}. In this case, we apply the triangle inequality to deduce that
$$
N(X; U) \leq p \sum_{\substack{\mathbf{w} = (w_i)_{i \in \mathcal{I}_3}, \ \mathbf{f} = (f_a) \\ w_i > \exp\left((\log X)^{\frac{1}{10 p^3 |A|^5}}\right) \Longleftrightarrow i \in U}}^{\flat \flat \flat} \prod_{i \in \mathcal{I}_3} \left(\frac{1}{|A|}\right)^{\omega(w_i)} \prod_{i \in \mathcal{I}_3' \cup \mathcal{I}_3''} \left(\frac{1}{p}\right)^{\omega(w_i)}.
$$
Set
$$
t(i) := 
\begin{cases}
1 &\text{if } i \in \mathcal{I}_3 - (\mathcal{I}_3' \cup \mathcal{I}_3'') \\
p &\text{if } i \in \mathcal{I}_3' \cup \mathcal{I}_3''.
\end{cases}
$$
We now define
$$
\mathrm{wt}(U) := \sum_{i \in U} \frac{1}{t(i) |A| \varphi(\ord(a(i)))} + \frac{1}{10p^3 |A|^5} \sum_{i \in \mathcal{I}_3 - U} \frac{1}{t(i) |A| \varphi(\ord(a(i)))}.
$$
It follows from Shiu's theorem \cite{Shiu} that there is a constant $C > 0$, depending only on $A$, such that for all $X \geq C$
$$
N(X; U) \leq CX (\log X)^{\mathrm{wt}(U) - 1}.
$$
We now claim that 
\begin{equation}
\label{eWtClaim}
\mathrm{wt}(U) -1 < -1 + \sum_{a \in A - \{\text{id}\}} \frac{1}{\varphi(\ord(a))},
\end{equation}
where we remark that the RHS is the logarithmic exponent in M\"aki's theorem. Hence our claim implies that the contribution of these $U$ is negligible. Let us now prove the claim. Since $|\mathcal{I}_3| \leq p^2 |A|^3$, we have the bound
$$
\mathrm{wt}(U) \leq \frac{1}{10p |A|^2} + \sum_{i \in U} \frac{1}{t(i) |A| \varphi(\ord(a(i)))}.
$$
Note that the RHS now only increases upon increasing $U$. Moreover, if $U$ is a strict subset of $\mathcal{U}$, then we see that
$$
\frac{1}{10p |A|^2} + \sum_{i \in U} \frac{1}{t(i) |A| \varphi(\ord(a(i)))} < \sum_{i \in \mathcal{U}} \frac{1}{t(i) |A| \varphi(\ord(a(i)))} = \sum_{a \in A - \{\text{id}\}} \frac{1}{\varphi(\ord(a))}.
$$
Hence it suffices to prove that
$$
\frac{1}{10p |A|^2} + \sum_{i \in U} \frac{1}{t(i) |A| \varphi(\ord(a(i)))} < \sum_{a \in A - \{\text{id}\}} \frac{1}{\varphi(\ord(a))}
$$
for all maximal unlinked $U \neq \mathcal{U}$. But this follows from Theorem \ref{tUnlinkedClassification} and a direct computation, thus proving the claim \eqref{eWtClaim}.

Third, suppose that $U = \mathcal{U}$. We are now going to show that
\begin{equation}
\label{eFinalClaim}
N(X; \mathcal{U}) \sim \# \mathcal{F}(X),
\end{equation}
which clearly implies the theorem, as we have already shown that $N(X; U) = o(\# \mathcal{F}(X))$ for all $U \neq \mathcal{U}$.

We now perform one final dissection, this time of the sum $N(X; \mathcal{U})$. We let $C > 0$ be a large real number (in terms of $A$ only), and we define, for each partition $V_1 \cup V_2 \cup V_3 = \mathcal{I}_3 - \mathcal{U}$, the subsum $N(X; \mathcal{U}, V_1, V_2, V_3)$ of $N(X; \mathcal{U})$, where
$$
w_i = 1 \Longleftrightarrow i \in V_1, \quad 1 < w_i < (\log X)^C \Longleftrightarrow i \in V_2, \quad w_i \geq (\log X)^C \Longleftrightarrow i \in V_3.
$$
If $V_3$ is nonempty, then the large sieve, as codified in Corollary \ref{cLargeSieve}, yields
$$
N(X; \mathcal{U}, V_1, V_2, V_3) = o(\# \mathcal{F}(X))
$$
provided that we pick $C$ sufficiently large (in terms of $A$ only). Indeed, note that $\mathcal{U}$ is maximal unlinked, so if $V_3 \neq \varnothing$, then there exist $u \in \mathcal{U}$ and $v \in V_3$ that are linked (or alternatively, $v \in V_3$ is linked with $u \in \mathcal{U}$). Then the argument proceeds exactly as the one leading to \eqref{eLSclaim} except that $z$ is now an arbitrarily large logarithmic power.

Next, if $V_2$ is nonempty, then Siegel--Walfisz, see Lemma \ref{lSW}, gives
$$
N(X; \mathcal{U}, V_1, V_2, V_3) = o(\# \mathcal{F}(X)).
$$
Finally, we observe that
$$
N(X; \mathcal{U}, \mathcal{I}_3 - \mathcal{U}, \varnothing, \varnothing) = \sum_{\substack{\mathbf{w} = (w_i)_{i \in \mathcal{I}_3} \\ \mathbf{f} = (f_a), \quad w_i = 1 \, \forall i \in \mathcal{I}_3 - \mathcal{U} \\ w_i > \exp\left((\log X)^{\frac{1}{10 p^3 |A|^5}}\right) \, \forall i \in \mathcal{U}}}^{\flat \flat \flat} \sum_{\alpha \in \mathbb{F}_p} \tilde{h}(\mathbf{w}, \alpha).
$$
We unwrap the definition of $\tilde{h}$ as in equation \eqref{eDefTildeh}, and then we apply Lemma \ref{lSW} to see that the terms with $\alpha \neq 0$ are negligible. Hence we obtain that
$$
N(X; \mathcal{U}, \mathcal{I}_3 - \mathcal{U}, \varnothing, \varnothing) = \sum_{\substack{\mathbf{w} = (w_i)_{i \in \mathcal{I}_3} \\ \mathbf{f} = (f_a), \quad w_i = 1 \, \forall i \in \mathcal{I}_3 - \mathcal{U} \\ w_i > \exp\left((\log X)^{\frac{1}{10 p^3 |A|^5}}\right) \, \forall i \in \mathcal{U}}}^{\flat \flat \flat} \hspace{-0.26cm} \prod_{i \in \mathcal{I}_3} \left(\frac{1}{|A|}\right)^{\omega(w_i)} \hspace{-0.26cm} \prod_{i \in \mathcal{I}_3' \cup \mathcal{I}_3''} \left(\frac{1}{p}\right)^{\omega(w_i)} + o(\# \mathcal{F}(X)).
$$
We now drop the condition 
$$
w_i > \exp\left((\log X)^{\frac{1}{10 p^3 |A|^5}}\right)
$$
incurring a negligible error (and similarly for the height bound involving $G(X)$ in the summation conditions stemming from $\flat \flat \flat$). The remaining sum can be rewritten as $\sum_{n \leq X} f(n)$ with $f$ the multiplicative function supported on squarefrees and given on the primes by
$$
f(q) =
\begin{cases}
|A| - 1 &\text{if } q = p \\
|\{a \in A - \{\text{id}\} : q \equiv 1 \bmod \ord(a)\}| &\text{if } q \neq p.
\end{cases}
$$
Evaluating this sum with \cite[Theorem 13.2]{Kou} gives equation \eqref{eFinalClaim} and hence the theorem.
\end{proof}

\subsection{Proof of Theorem \ref{thm:main}}
We will now restate the main result of our paper.

\begin{theorem}\label{thm:nr-Hom}
Let $p$ be an odd prime and let $A$ be a finite abelian $p$-group. Let $e$ be a nontrivial primitive idempotent of $\Q_p[A]$. Let $1 \leq j \leq r_e$ and let $M_j = e \Z_p[A]/\mathfrak{m}_e^j$. Then we have for 100\% of the $\varphi \in \mathrm{Epi}(G_\Q, A)$, ordered by their product of ramified primes $\mathfrak{f}(\varphi)$, the equality
$$
|\Hom_{\textup{nr}}(G_K, M_j)^A| = \frac{|N_{\textup{typical}}(A, M_j)|}{|M_j| \cdot |H^1(A, M_j)|} \frac{|H^0(G_\Q, M_j)|}{|H^0(G_\Q, M_j^\ast)|} \prod_{v \mid \mathfrak{f}(\varphi)} p^{\max\{1 \leq i \leq j : v \textup{ is special at level } i\}},
$$
where $K$ denotes the fixed field of $\ker(\varphi)$.
\end{theorem}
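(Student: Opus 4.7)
The plan is to combine the algebraic framework of Section \ref{sSelmer} with Theorem \ref{tMainAnalytic}. I would start by applying Lemma \ref{lStatisticalFormula} with $M = M_j$ and the local conditions $\calL_{v,j}$ to obtain
\[
|\Hom_{\nr}(G_K, M_j)^A| \leq \frac{|N_\varphi(A, M_j)|\cdot|\Sel_{\calL_j}(G_\Q, M_j)|}{|H^1(A, M_j)|},
\]
with equality whenever $\Sel_{\calL_j^\perp}(G_\Q, M_j^*) = 0$. Combined with Lemma \ref{lFormulaN}, which replaces $N_\varphi$ by $N_{\text{typical}}$ for 100\% of $\varphi$, this reduces the theorem to two subtasks: (a) statistical vanishing of the dual Selmer group, and (b) computing $|\Sel_{\calL_j}(G_\Q, M_j)|$ by Greenberg--Wiles.

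For (a), Lemma \ref{lem:Selmer-vanish} and the union bound further reduce the task to showing, for each fixed $r \in \{1, \ldots, j\}$, that $\Sel_{f_r^{*-1}(\calL_r^\perp)}(G_\Q, \mu_p) = 0$ for 100\% of $\varphi$. Since enlarging local conditions at $p$ only grows the Selmer group, I may replace the local condition at $p$ by the full $H^1(G_{\Q_p}, \mu_p)$ and then apply Theorem \ref{tMainAnalytic}. The critical input is a choice of nontrivial cyclic subgroup $C \subseteq A$: the definition of $r_e$ furnishes a nontrivial $\gamma \in A$ with $e(\gamma - 1) \in \mathfrak{m}_e^{r_e}$, and I would take $C := \langle \gamma \rangle$. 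Verifying the first, second, third, and fifth bullets of Definition \ref{dLocal} is routine---an inflation-restriction diagram chase, Lemma \ref{lLocal}, Proposition \ref{prop:pushforward<p} combined with the fact that $f_r^{*-1}(\calL_{v,r}^\perp)$ is an $\FF_p$-subspace, and construction, respectively. The fourth bullet is the main obstacle: for primes $v \neq p$ with $\varphi(G_{\Q_v}) = \varphi(\calT_v) = C$, I must show $f_r^{*-1}(\calL_{v,r}^\perp) = 0$. Since $C \subseteq H_r := \{a \in A : e(a-1) \in \mathfrak{m}_e^r\}$ acts trivially on $M_r$, and $|C|$ annihilates $M_r$ (because $r \leq j \leq r_e$ forces $p \in \mathfrak{m}_e^r$, hence $pM_r = 0$), the sum $\sum_{c \in C} c = |C|\cdot\mathrm{id}$ annihilates $M_r$, so Lemma \ref{lem:notes-1} yields $\calL_{v,r}^\perp = 0$. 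The inertia $C$ being nontrivial of $p$-power order forces $v \equiv 1 \bmod p$ and hence $\mu_p \subset \Q_v$; combining this with the trivial $C$-action on $M_r^* \cong \Hom(M_r, \mu_p)$ shows that $G_{\Q_v}$ acts trivially on $M_r^*$, so Lemma \ref{lem:notes-2} gives $\ker f_r^* = 0$. Together these yield $f_r^{*-1}(\calL_{v,r}^\perp) = \ker f_r^* = 0$, as required.

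For (b), with the dual Selmer vanishing, Greenberg--Wiles gives
\[
|\Sel_{\calL_j}(G_\Q, M_j)| = \frac{|H^0(G_\Q, M_j)|}{|H^0(G_\Q, M_j^*)|}\prod_{v\in\Omega(\Q)}\frac{|\calL_{v,j}|}{|H^0(G_{\Q_v}, M_j)|}.
\]
The local factor is $1/|M_j|$ at $v = \infty$ (since $p$ odd makes $H^1(G_\R, M_j) = 0$ and $M_j^{G_\R} = M_j$), equals $1$ at unramified finite places $v \neq p$, and at each $v \mid \mathfrak{f}(\varphi)$ equals $p^{\max\{1 \leq i \leq j \,:\, v \text{ is special at level } i\}}$ by Proposition \ref{prop:local-cond-size}. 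Substituting this product back into the equality version of Lemma \ref{lStatisticalFormula} gives the stated formula for 100\% of $\varphi$.
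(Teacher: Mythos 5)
Your proposal follows essentially the same route as the paper: reduce via Lemmas \ref{lStatisticalFormula} and \ref{lFormulaN}, get dual Selmer vanishing from Lemma \ref{lem:Selmer-vanish} plus Theorem \ref{tMainAnalytic}, and conclude with Greenberg--Wiles and Proposition \ref{prop:local-cond-size}. You also spell out the verification of the fourth bullet of Definition \ref{dLocal} in more detail than the paper does, correctly reducing it to Lemmas \ref{lem:notes-1} and \ref{lem:notes-2}.

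There is, however, one incorrect intermediate claim in that verification. You assert that $r \leq r_e$ forces $p \in \mathfrak{m}_e^r$, hence $pM_r = 0$, and you use this both to deduce $|C| M_r = 0$ and to write $M_r^\ast \cong \Hom(M_r, \mu_p)$. This is false in general: for instance, take $A = \Z/p^2\Z \times \Z/p\Z$ and $e$ the idempotent corresponding to a character that is trivial on $\Z/p^2\Z$ and nontrivial on $\Z/p\Z$. Then $e\Z_p[A] \cong \Z_p[\zeta_p]$ with $v(p) = p-1$, the kernel $H = \Z/p^2\Z$ acts trivially, and taking $\gamma$ a generator of $H$ (so $1-\gamma$ acts as $0$ and $\ord(\gamma) = p^2$) gives $r_e \geq 2(p-1) > v(p)$, so $p M_{r_e} \neq 0$.

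Fortunately your conclusion survives, but for different reasons than you gave. First, $|C| M_r = 0$ does not require $pM_r = 0$: the definition of $r_e$ directly gives $\ord(\gamma) M_{r_e} = 0$, hence $\ord(\gamma)M_r = |C|M_r = 0$ for $r \leq r_e$. Second, although $M_r^\ast$ need not be $\Hom(M_r, \mu_p)$, we have $M_r^\ast = \Hom(M_r, \mu_{p^n})$ where $p^n = \mathrm{exponent}(M_r)$ divides $|C|$ (again from $|C|M_r = 0$), and tame inertia with image $C$ forces $v \equiv 1 \bmod |C|$, so $\mu_{p^n} \subset \Q_v$ and $G_{\Q_v}$ (which acts trivially on $M_r$ through $C$) acts trivially on $M_r^\ast$. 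With those two substitutions your argument is complete and agrees with the paper's.
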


\begin{proof}
For each $1 \leq k \leq j$, let $\mathcal{L}_{v, k}$ be the local condition defined in \eqref{eq:L-v-j}. Write $f_k: M_k \rightarrow \mathbb{F}_p$ for the natural quotient map. We define the new local conditions on $\mu_p$ given by
$$
\mathcal{L}_{v, k}' = 
\begin{cases}
{f_k^*}^{-1}(\mathcal{L}_{v, k}^\perp) &\text{if } v \neq p \\
H^1(G_{\Q_p}, \mu_p) &\text{if } v = p,
\end{cases}
$$
and we write $\mathcal{L}_k'$ for the resulting Selmer structure.

We now apply Theorem \ref{tMainAnalytic} to each $\mathcal{L}_k'$ for $1 \leq k \leq j$. We must check the hypotheses of Theorem \ref{tMainAnalytic} from Definition \ref{dLocal}: 
\begin{itemize}
\item the first bullet point is a direct computation with the local conditions (note that $\mathcal{T}_v$ acts trivially on $M_k^\ast$ for $v \nmid p \mathfrak{f}(\varphi)$ and that $f_k^\ast$ is injective, hence we have the equality $(f_k^\ast)^{-1}(H^1_{\text{nr}}(G_{\Q_v}, M_k^\ast)) = H^1_{\text{nr}}(G_{\Q_v}, \mu_p)$),
\item the second bullet point is Lemma \ref{lLocal},
\item the third bullet point holds by Proposition \ref{prop:pushforward<p},
\item the fourth bullet point holds by definition of $r_e$. Indeed, by definition of $r_e$, there exists $\gamma \in A$ such that $1 - \gamma$ and $\ord(\gamma)$ annihilate $M_{r_e}$, and hence $M_k$ (as $k \leq j \leq r_e$). Then we can take $C = \langle \gamma \rangle$, which is a nontrivial subgroup,
\item the fifth bullet point is true by our choice of $\mathcal{L}_{p, k}'$.
\end{itemize}
Hence Theorem \ref{tMainAnalytic} shows that for 100\% of the $\varphi \in \mathrm{Epi}(G_\Q, A)$, ordered by their product of ramified primes $\mathfrak{f}(\varphi)$, we have
$$
\mathrm{Sel}_{\mathcal{L}_k'}(G_\Q, \mu_p) = 0
$$
for all $1 \leq k \leq j$. For such $\varphi$, we get vanishing of the dual Selmer group 
\begin{equation}
\label{eDualVanish}
\mathrm{Sel}_{\mathcal{L}_j^\perp}(G_\Q, M_j^\ast) = 0
\end{equation}
by Lemma \ref{lem:Selmer-vanish}. Hence the hypothesis of Lemma \ref{lStatisticalFormula} is satisfied. This lemma yields
$$
|\Hom_{\textup{nr}}(G_K, M_j)^A| = \frac{|N_\varphi(A, M_j)| |\mathrm{Sel}_{\mathcal{L}_j}(G_\Q, M_j)|}{|H^1(A, M_j)|}.
$$
We apply Lemma \ref{lFormulaN} to replace $|N_\varphi(A,M_j)|$ by $|N_{\text{typical}}(A, M_j)|$. Finally, we apply the Greenberg--Wiles formula to understand $\mathrm{Sel}_{\mathcal{L}_j}(G_\Q, M_j)$, which gives
\begin{align*}
|\mathrm{Sel}_{\mathcal{L}_j}(G_\Q, M_j)| &= \frac{|\mathrm{Sel}_{\mathcal{L}_j^\perp}(G_\Q, M_j^\ast)|}{|M_j|} \frac{|H^0(G_\Q, M_j)|}{|H^0(G_\Q, M_j^\ast)|} \prod_{v \mid \mathfrak{f}(\varphi)} \frac{|\mathcal{L}_{v, j}|}{|H^0(G_{\Q_v}, M_j)|} \\
&= \frac{|H^0(G_\Q, M_j)|}{|M_j| \cdot |H^0(G_\Q, M_j^\ast)|} \prod_{v \mid \mathfrak{f}(\varphi)} p^{\max\{1 \leq i \leq j : v \textup{ is special at level } i\}}
\end{align*}
by equation \eqref{eDualVanish} and Proposition \ref{prop:local-cond-size} to relate the local factors directly to special primes. Here the additional factor $|M_j|$ comes from the infinite place in the Greenberg--Wiles formula. Indeed, the infinite place contributes
$$
\frac{|\mathcal{L}_{\infty, j}|}{|H^0(G_{\mathbb{R}}, M_j)|} = \frac{1}{|M_j|},
$$
because $p$ is odd.
\end{proof}

We can also give an explicit expression for the constant in Theorem~\ref{thm:main}.

	\begin{theorem}\label{thm:main-complete}
		Theorem~\ref{thm:main} holds with 
		\[
			C(A, e\Z_p[A]/I)= \begin{cases}
				 \log_p \dfrac{|N_{\textup{typical}}(A, e\Z_p[A]/I)|}{|N_{\textup{typical}}(A, \mathfrak{m}_ee\Z_p[A]/I)|}-1 & \textup{ if $e\Z_p[A]/I \not \cong \FF_p$} \\
				 - \dim_{\FF_p} A[p] & \textup{ otherwise.}
				 \end{cases}
		\]
	\end{theorem}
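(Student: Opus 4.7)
The plan is to apply Theorem~\ref{thm:nr-Hom} at two consecutive levels and take the difference. Since $e\Z_p[A]$ is a discrete valuation ring, every proper ideal containing $I_e$ has the form $\mathfrak{m}_e^j$ for some $1\le j\le r_e$, so I may write $I=\mathfrak{m}_e^j$, $M_j=e\Z_p[A]/\mathfrak{m}_e^j$, and note that $\mathfrak{m}_e\cdot e\Z_p[A]/I\cong M_{j-1}$ as $A$-modules (via multiplication by a uniformizer). The first step is to use class field theory and the $A$-equivariance to identify $\Hom_{\nr}(G_K,M_j)^A$ with $\Hom_{e\Z_p[A]}(e\CL(K),M_j)$, and then to observe that the structure theorem for finite modules over the DVR $e\Z_p[A]$ gives
\[
\rk_{\mathfrak{m}_e^j}e\CL(K)=\log_p|\Hom_{e\Z_p[A]}(e\CL(K),M_j)|-\log_p|\Hom_{e\Z_p[A]}(e\CL(K),M_{j-1})|,
\]
with the convention $M_0=0$. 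The case $j=1$ (where $e\Z_p[A]/I\cong\FF_p$) then falls out from a single application of Theorem~\ref{thm:nr-Hom}: Lemma~\ref{lem:N-Fp} yields $|N_{\textup{typical}}(A,\FF_p)|=1$, Pontryagin duality gives $|H^1(A,\FF_p)|=|A[p]|$, and for $p$ odd we have $|H^0(G_\Q,\mu_p)|=1$ and $|H^0(G_\Q,\FF_p)|=p$, producing $C(A,\FF_p)=-\dim_{\FF_p}A[p]$.

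For $j\ge 2$, I would subtract the formulas of Theorem~\ref{thm:nr-Hom} at levels $j-1$ and $j$ (both valid for $100\%$ of $\varphi$ by a union bound). The special-prime product collapses to $\#\mathcal{R}(K,\iota,e,\mathfrak{m}_e^j)$ since being special at level $i$ is downward-closed in $i$, and the ratio $|M_j|/|M_{j-1}|=p$ supplies a $-1$. To see that all remaining cohomological factors cancel I need three identities:
\begin{enumerate}[label=(\alph*)]
\item $|H^0(G_\Q,M_j^\ast)|=1$: since $A$ is a $p$-group with $p$ odd, complex conjugation lies in $\ker\varphi$, acts trivially on $M_j$ and by inversion on $\mu_{p^\infty}$, so any equivariant $\phi\colon M_j\to\mu_{p^\infty}$ satisfies $\phi=\phi^{-1}$ and is trivial.
\item $|H^0(A,M_j)|=p$: by \cite[Lemma~2.5]{Liu}, $A$ acts on $e\Z_p[A]$ through a cyclic quotient $C=A/H$ via a primitive $|C|$-th root of unity $\zeta$, and since $\zeta-1$ is a uniformizer of $e\Z_p[A]$ the augmentation ideal of $A$ coincides with $\mathfrak{m}_e$; hence $M_j^A=M_j[\mathfrak{m}_e]=\mathfrak{m}_e^{j-1}M_j\cong\FF_p$.
\item $|H^1(A,M_j)|=|H^1(A,M_{j-1})|$: combined with (b) and the long exact sequence attached to $0\to M_{j-1}\to M_j\to\FF_p\to 0$, this is equivalent to showing that the image of $H^1(A,M_j)\to H^1(A,\FF_p)$ has size exactly $p$. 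I will extract this from the Hochschild--Serre spectral sequence for $1\to H\to A\to C\to 1$ (with $H$ acting trivially on $M_j$): the $E_\infty^{1,0}$-component $H^1(C,M_j)\to H^1(C,\FF_p)$ is the tautological isomorphism $M_j/\mathfrak{m}_eM_j\cong\FF_p$, whereas the $E_\infty^{0,1}$-component $\Hom(H,M_j^C)\to\Hom(H,\FF_p)$ vanishes, because for $j\ge 2$ the socle $M_j^C=\mathfrak{m}_e^{j-1}M_j$ is contained in $\mathfrak{m}_eM_j$ and therefore projects to zero in $\FF_p$.
\end{enumerate}

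Assembling (a)--(c), every cohomological ratio in the difference equals $1$, which yields
\[
\rk_{\mathfrak{m}_e^j}e\CL(K)=\#\mathcal{R}(K,\iota,e,\mathfrak{m}_e^j)+\log_p\frac{|N_{\textup{typical}}(A,M_j)|}{|N_{\textup{typical}}(A,M_{j-1})|}-1,
\]
matching the stated formula once one identifies $\mathfrak{m}_eM_j$ with $M_{j-1}$. The delicate step is (c): the vanishing of the $E_\infty^{0,1}$-component for $j\ge 2$ but \emph{not} for $j=1$ (where $M_1^C=\FF_p$ makes the composition with the projection the identity) is exactly the algebraic dichotomy that forces the two cases in the statement, and it is where the Hochschild--Serre bookkeeping must be handled most carefully.
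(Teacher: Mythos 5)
Your proof is correct and follows essentially the same strategy as the paper's own proof: identify $\Hom_{\nr}(G_K,M_j)^A$ with $\Hom_{e\Z_p[A]}(e\CL(K),M_j)$, express $\rk_I e\CL(K)$ as the ratio of counts at consecutive levels, apply Theorem~\ref{thm:nr-Hom} twice, and show the cohomological factors not involving $N_{\text{typical}}$ are constant in $j\ge 1$. The only places your write-up diverges are cosmetic. For (a), you deduce $H^0(G_\Q,M_j^\ast)=0$ from complex conjugation acting by inversion, whereas the paper observes that $\mu_p$ is the unique minimal submodule of $M_j^\ast$ and $\mu_p\not\subset\Q$; both are one-line arguments relying on $p$ being odd. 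For (c), you derive the constancy of $|H^1(A,M_j)|$ from the long exact sequence of $0\to M_{j-1}\to M_j\to\FF_p\to 0$ together with a socle computation, while the paper proves the stronger exact identity $|H^1(A,M_j)|=p\cdot|H^1(H,\FF_p)|$ directly from the decomposition $H^1(A,M_j)\cong H^1(H,M_j^C)\oplus H^1(C,M_j)$ and the vanishing of the norm map. The paper's route to (c) is shorter, but yours has the instructive feature of isolating exactly where the $j=1$ and $j\ge 2$ cases part ways (the $E_\infty^{0,1}$ map being zero for $j\ge 2$ but the identity for $j=1$). All the verifications you flag as the delicate step — the downward closure of being special at level $i$, the claim $(\sigma-1)e\Z_p[A]=\mathfrak{m}_e$, and the vanishing of $M_j^C\to M_j/\mathfrak{m}_e M_j$ for $j\ge 2$ — are correct.
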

	
	\begin{proof}
		Note that $\Hom_{\nr}(G_K, M_j)^A= \Hom_{\Z_p[A]}(\CL(K), M_j)$. Since $M_j$ is an $e\Z_p[A]$-module and $e\CL(K)=\CL(K)\otimes_{\Z_p[A]}e\Z_p[A]$, we have 
		\[
			\Hom_{\nr}(G_K, M_j)^A= \Hom_{e\Z_p[A]}(e\CL(K), M_j).
		\]
		Write $e\CL(K)\cong \bigoplus_{i=1}^{\infty} (e\Z_p[A]/\mathfrak{m}_e^i)^{\oplus n_i}$. Then
		\begin{equation}\label{eq:Hom-nr}
			\Hom_{\nr}(G_K, M_j)^A= \Hom_{e\Z_p[A]}(e\CL(K), M_j) =  \bigoplus_{i=1}^{j-1} (e\Z_p[A]/\mathfrak{m}_e^i)^{\oplus n_i} \oplus \bigoplus_{i=j}^{\infty} (e\Z_p[A]/\mathfrak{m}_e^i)^{\oplus n_i}.
		\end{equation}
		Because $e\Z_p[A]$ is a discrete valuation ring with residue field $\FF_p$, $|e\Z_p[A]/\mathfrak{m}_e^i|=p^i$.
		Suppose $I=\mathfrak{m}_e^d$. Since $I$ is assumed to be proper, we have $d\geq 1$. By \eqref{eq:Hom-nr} and the definition of $I$-rank, we have
		\begin{equation}\label{eq:rk-Hom}
			\rk_I e\CL(K) = \log_p \frac{|\Hom_{\nr}(G_K, M_d)^A|}{|\Hom_{\nr}(G_K, M_{d-1})^A|},
		\end{equation}
		and we will use the formula in Theorem~\ref{thm:nr-Hom} to compute both $|\Hom_{\nr}(G_K, M_{d - 1})^A|$ and $|\Hom_{\nr}(G_K, M_d)^A|$ for 100\% of the epimorphisms $\varphi \in \mathrm{Epi}(G_\Q, A)$.	
		
		By \cite[Lemma~2.5]{Liu}, the $A$-action on $e\Z_p[A]$ factors through a nontrivial cyclic quotient of $A$. So $A$ can be decomposed as $H \times C$ for some cyclic subgroup $C$ such that $H$ acts trivially on $e\Z_p[A]$ (this decomposition is not unique). Recall that $M_j^C \cong \FF_p$. Therefore we have by the Hochschild--Serre spectral sequence \cite[Theorem~2.4.6]{NSW}
		\[
			H^1(A, M_j) \cong H^1(H, M_j^C) \oplus H^1(C, M_j) = H^1(H, \FF_p) \oplus H^1(C, M_j)
		\]
		for all $j \geq 1$.
		Since $C$ is cyclic, it follows by \cite[Proposition~1.7.6]{NSW} that $|H^1(C, M_j)| = |\widehat{H}^0(C, M_j)| = |M_j^C/\textup{Nm}_C M_j|$. Using \cite[Lemma~2.6]{Liu} and noting $C$ acts nontrivially on $e\Z_p[A]$, the norm map $\textup{Nm}_C$ annihilates $e\Z_p[A]$. Since $M_j^C \cong \FF_p$, we obtain 
		\[
			|H^1(A, M_j)| = |H^1(H, \FF_p)| \cdot | M_j^C/\textup{Nm}_C M_j| = p \cdot |H^1(H, \FF_p)|,
		\]
		which does not depend on $j$ as long as $j \geq 1$.
		
		Using the above results, we apply Theorem~\ref{thm:nr-Hom} to compute \eqref{eq:rk-Hom}. First, assume $d \geq 2$ (i.e., $I \subset \mathfrak{m}_e$). Since $e$ is a nontrivial idempotent, $M_d^A= M_{d-1}^A=\FF_p$, and therefore $H^0(G_{\Q}, M_d) \cong H^0(G_{\Q}, M_{d-1}) = \FF_p$. Note that $e\Z_p[A]/\mathfrak{m}_e \cong \FF_p$ is the unique simple quotient of $e\Z_p[A]$, so $\mu_p$ is the unique minimal submodule of $M_d^*$ and $M_{d-1}^*$. Then since $p$ is odd, $\mu_p \not \subseteq \Q$ and we have $H^0(G_{\Q}, M_d^*)=H^0(G_{\Q}, M_{d-1}^*)=0$. By Theorem~\ref{thm:nr-Hom}, we obtain
		\[
			\rk_I e\CL(K) = \log_p \frac{|N_{\textup{typical}}(A, M_d)|}{|N_{\textup{typical}}(A, M_{d-1})|}-1 + \#\left\{v \mid \mathfrak{f}(\varphi) : \text{$v$ is special at level $d$}\right\}.
		\]

		When $d=1$ (i.e., $I=\mathfrak{m}_e$), $e\Z_p[A]/I=M_d=\FF_p$ and $M_{d-1}=0$, so for 100\% of the epimorphisms $\varphi \in \mathrm{Epi}(G_\Q, A)$
		\begin{eqnarray*}
			\rk_I e\CL(K) &=& \log_p|\Hom_{\nr}(G_K, \FF_p)^A| \\
			&=& \log_p \frac{|N_{\textup{typical}}(A, \FF_p)|}{|H^1(A, \FF_p)|} + \#\left\{v \mid \mathfrak{f}(\varphi) : \text{$v$ is special at level $1$}\right\}\\
			&=& \#\left\{v \mid \mathfrak{f}(\varphi) : \text{$v$ is special at level $1$}\right\} - \dim_{\FF_p} A[p],
		\end{eqnarray*}
		where the last step uses Lemma~\ref{lem:N-Fp}. 
		Finally, the theorem follows immediately from Definition~\ref{def:special-prime}.
	\end{proof}
	
	We now give the proof of Corollary~\ref{cor:multicyclic} and Corollary~\ref{cor:cyclic}.
	
	\begin{proof}[Proof of Corollary~\ref{cor:multicyclic}]
		Let $A=(\Z/p\Z)^{\oplus n}$ with $n >1$, and $e$ a nontrivial primitive idempotent of $\Q_p[A]$. Recall that the action of $A$ factors through a nontrivial cyclic quotient; this quotient has to be isomorphic to $\Z/p\Z$. Let $H$ denote the kernel of the quotient map from $A$ to this cyclic quotient. So $H \cong (\Z/p\Z)^{\oplus n-1}$. One can check by definition~\eqref{eq:def-Ie} that $I_e=p\cdot e\Z_p[A]= \mathfrak{m}_e^{p-1} e\Z_p[A]$. When $I=\mathfrak{m}_e$, the statement $\ref{item:multicyclic-1}$ of the corollary follows from genus theory. When $I \subset \mathfrak{m}_e$, $A$ acts nontrivially on $e\Z_p[A]/I$, and $A/H$ acts faithfully on $e\Z_p[A]/I$. So the special primes are exactly those ramified primes with $\iota(D_v) \subseteq H$. Then the statement $\ref{item:multicyclic-2}$ follows from Theorem~\ref{thm:main-complete} and Lemma~\ref{lem:N-M-exp=p}.
	\end{proof}
	
	\begin{proof}[Proof of Corollary~\ref{cor:cyclic}]
		Since $A$ is cyclic, Lemma~\ref{lem:N-in-inf} implies that $N_{\textup{typical}}(A, e\Z_p[A]/I) =0$ for any nonzero ideal $I$.
		Then the formula for $\rk_I e\CL(K)$ in the corollary directly follows from Theorem~\ref{thm:main-complete}. 
	\end{proof}

{\small	
}

\begin{thebibliography}{39}
\bibitem{Alberts}
B. Alberts.
Statistics of the first Galois cohomology group: a refinement of Malle's conjecture.
\textit{Algebra Number Theory} 15:2513--2569, 2021.

\bibitem{AO}
B. Alberts and E. O'Dorney.
Harmonic analysis and statistics of the first Galois cohomology group.
\textit{Res. Math. Sci.} 8 (2021), no. 3, Paper No. 50, 16 pp.

\bibitem{AO2}
B. Alberts and E. O'Dorney.
Corrigendum to ``Harmonic analysis and statistics of the first Galois cohomology group''.
\textit{Res. Math. Sci.} 10 (2023), no. 3, Paper No. 31, 15 pp.

\bibitem{Chan}
S. Chan.
The 3-isogeny Selmer groups of the elliptic curves $y^2 = x^3 + n^2$.
\textit{Int. Math. Res. Not. IMRN} 2024, no. 9, 7571--7593.

\bibitem{FK}
\'E. Fouvry and J. Kl\"uners.
On the 4-rank of class groups of quadratic number fields.
\textit{Invent. Math.} 167:455--513, 2007.

\bibitem{FKP}
\'E. Fouvry, P. Koymans and C. Pagano.
On the 4-rank of class groups of Dirichlet biquadratic fields.
\textit{J. Inst. Math. Jussieu} 21:1543--1570, 2022.

\bibitem{Gerth}
F. Gerth.
The $4$-class ranks of quadratic fields. 
\textit{Invent. Math.} 77:498--515, 1984. 

\bibitem{Gerth-odd}
F. Gerth.
Extension of conjectures of Cohen and Lenstra.
\textit{Exposition. Math.} 5(2):181--184, 1987.

\bibitem{Goldstein}
L.J. Goldstein. 
A Generalization of the Siegel--Walfisz Theorem. 
\textit{Trans. Amer. Math. Soc.} 149(2):417--429, 1970.

\bibitem{HB}
D.R. Heath-Brown. 
The size of Selmer groups for the congruent number problem, II. 
\textit{Invent. Math.} 118:331--370, 1994.

\bibitem{Jannsen}
U. Jannsen.
The splitting of the Hochschild-Serre spectral sequence for a product of groups.
\textit{Canad. Math. Bull.} 33(2):181--183, 1990.

\bibitem{Kou}
D. Koukoulopoulos. 
The distribution of prime numbers. Graduate Studies in Mathematics, 203.
\textit{American Mathematical Society, Providence, RI,} 2019. xii + 356 pp. 

\bibitem{KMS}
P. Koymans, A. Morgan and H. Smit.
The 4-rank of class groups of $K(\sqrt{n})$.
\textit{arXiv preprint}, 2101.03407.

\bibitem{KP1}
P. Koymans and C. Pagano.
On the distribution of $\text{Cl}(K)[l^{\infty}]$ for degree $l$ cyclic fields. 
\textit{J. Eur. Math. Soc.} 24(4):1189--1283, 2022.

\bibitem{KPMalle}
P. Koymans and C. Pagano.
On Malle's conjecture for nilpotent groups.
\textit{Trans. Amer. Math. Soc. Ser. B} 10:310--354, 2023.

\bibitem{KPS}
P. Koymans, C. Pagano and E. Sofos.
Elliptic fibrations and $3 \cdot 2^k$.
\textit{arXiv preprint}, 2409.02080.

\bibitem{KR}
P. Koymans and N. Rome.
Weak approximation on the norm one torus.
\textit{Compos. Math.} 160(6):1304--1348, 2024.

\bibitem{Liu}
Y. Liu.
On the Distribution of Class Groups of Abelian Extensions.
\textit{arXiv preprint}, 2411.19318.

\bibitem{Maki}
S. M\"aki.
The conductor density of abelian number fields.
\textit{J. London Math. Soc. (2)} 47(1):18--30, 1993.

\bibitem{MP}
A. Morgan and R. Paterson.
On 2-Selmer groups of twists after quadratic extension.
\textit{J. Lond. Math. Soc. (2)} 105:1110--1166, 2022.

\bibitem{NSW}
J. Neukirch, A. Schmidt and K. Wingberg.
Cohomology of number fields.
Grundlehren Math. Wiss., 323.
\textit{Springer-Verlag, Berlin,} 2008, xvi+825 pp.

\bibitem{PS}
C. Pagano and E. Sofos.
Diophantine Stability and Second-Order Terms.
\textit{Int. Math. Res. Not. IMRN} 2025, no. 18, rnaf284.

\bibitem{Santens}
T. Santens.
Diagonal quartic surfaces with a Brauer--Manin obstruction.
\textit{Compos. Math.} 159(4):659--710, 2023.

\bibitem{Shiu}
P. Shiu.
A Brun-Titchmarsh theorem for multiplicative functions.
\textit{J. Reine Angew. Math.} 313:161--170, 1980.

\bibitem{Smith}
A. Smith.
$2^\infty$-Selmer Groups, $2^\infty$-class groups, and Goldfeld's conjecture.
\textit{arXiv preprint}, 1702.02325v2.

\bibitem{Smi22a}
A. Smith.
The distribution of $\ell^\infty$-Selmer groups in degree $\ell$ twist families I.
\textit{arXiv preprint}, 2207.05674v2.

\bibitem{Smi22b}
A. Smith.
The distribution of $\ell^\infty$-Selmer groups in degree $\ell$ twist families II.
\textit{arXiv preprint}, 2207.05143v2.

\bibitem{Wood}
M.M. Wood. 
On the probabilities of local behaviors in abelian field extensions. 
\textit{Compos. Math.} 146(1):102--128, 2010.
\end{thebibliography}
\end{document}